%Changes MFS 5/21/09
%Changes MFS 8/18/09
%Changes MFS 12/4/09
%CHanges MFS 12/29/09
%Changes MFS 12/30/09
%Changes MFS 3/01/10
%changes MFS 3/16/10
%changes MFS 3/22/10
%changes MFS 4/5/10
%changes MFS 4/8/10
\documentclass[12pt]{article}

\usepackage{pdfsync}
\usepackage{amssymb}
\usepackage{eufrak}
\usepackage{amsmath}

\usepackage{color}

\marginparwidth 0pt

\oddsidemargin        0pt
\evensidemargin  0pt
\marginparsep 0pt

\topmargin -.3in

%\externaldocument{Difference_groups3}
\textwidth   6.4 in
 \textheight  8.5 in
\begin{document}

\title{A Jordan-H\"older Theorem for  Differential Algebraic Groups}
\author{Phyllis J. Cassidy\footnote{Department of Mathematics, Smith College, MA 01063; Department of Mathematics, The City College of New York, 
New York, NY 10038, USA, email: \texttt{pcassidy@smith.edu}} \ and 
Michael F. Singer\footnote{Department of Mathematics, North Carolina
State University, Box 8205, Raleigh, NC 27695-8205, USA, email: \texttt{singer@math.ncsu.edu}. The second author was partially supported by NSF Grant CCR-0634123 and he would also like to thank the  Max Planck Institut f\"ur Mathematik  and the Courant Institute of Mathematical Sciences for their hospitality.  This work was begun during a visit to the first institute in the Spring of 2006 and continued during a stay as a Visiting Member at the second institute in the Fall of 2008.} }

\date{}
%\date{December 29, 2009}
\def\QED{\hbox{\hskip 1pt \vrule width4pt height 6pt depth 1.5pt \hskip 1pt}}
\newcounter{defcount}[section]
\setlength{\parskip}{1ex}
\newtheorem{thm}{Theorem}[section]
\newtheorem{lem}[thm]{Lemma}
\newtheorem{cor}[thm]{Corollary}
\newtheorem{prop}[thm]{Proposition}
\newtheorem{defin}[thm]{Definition}
\newtheorem{defins}[thm]{Definitions}
\newtheorem{remark}[thm]{Remark}
\newtheorem{remarks}[thm]{Remarks}
\newtheorem{ex}[thm]{Example}
\newtheorem{exs}[thm]{Examples}

\def\GL{{\rm GL}}
\def\SL{{\rm SL}}
\def\Sp{{\rm Sp}}
\def\sl{{\rm sl}}
\def\sp{{\rm sp}}
\def\gl{{\rm gl}}
\def\PSL{{\rm PSL}}
\def\SO{{\rm SO}}
\def\Gal{{\rm Aut}}
\def\Sym{{\rm Sym}}
\def\tildea{\tilde a}
\def\tildeb{\tilde b}
\def\tildeg{\tilde g}
\def\tildee{\tilde e}
\def\tildeA{\tilde A}
\def\calG{{\cal G}}
\def\calH{{\cal H}}
\def\calT{{\cal T}}
\def\calC{{\cal C}}
\def\calP{{\cal P}}
\def\calS{{\cal S}}
\def\calM{{\cal M}}
\def\calF{{\cal F}}
\def\calY{{\cal Y}}
\def\calX{{\cal X}}
\def\calO{{\cal O}}
\def\calR{{\cal R}}
\def\curve{{\rm {\bf C}}}
\def\P1{{\rm {\bf P}}^1}
\def\Ad{{\rm Ad}}
\def\ad{{\rm ad}}
\def\Aut{{\rm Aut}}
\def\Int{{\rm Int}}
\def\CX{{\mathbb C}}
\def\QX{{\mathbb Q}}
\def\NX{{\mathbb N}}
\def\ZX{{\mathbb Z}}
\def\DX{{\mathbb D}}
\def\AX{{\mathbb A}}
\def\UX{{\mathbb U}}
\def\sd{{\sigma\der}}
\def\SDP{{\Sigma\Delta\Pi}}
\def\SD{{\Sigma\Delta}}
\def\der{{\partial}}
\def\dd{{\delta}}
\def\disp{{\rm disp}}
\def\pdisp{{\rm pdisp}}
\def\End{{\rm End}}
\def\kbar{{\overline{k}}}
\def\ubar{{\tilde{u}}}
\def\zbar{{\tilde{z}}}
\def\Gbar{{\overline{G}}}
\def\Ga{{{\mathbb G}}_a}
\def\Gm{{{\mathbb G}}_m}
\def\td{{\tilde{d}}}
\def\frakg{{\mathfrak g}}
\def\frakL{{\mathfrak L}}
\def\frakD{{\mathfrak D}}
\def\frakU{{\UX}}
\def\vy{{\Vec{y}}}
\def\vz{{\Vec{z}}}
\def\va{{\Vec{a}}}
\def\semi{\hbox{${\vrule height 5.2pt depth .3pt}\kern -1.7pt\times $ }}

\newenvironment{prf}[1]{\trivlist
\item[\hskip \labelsep{\bf
#1.\hspace*{.3em}}]}{~\hspace{\fill}~$\square$\endtrivlist}
\newenvironment{proof}{\begin{prf}{Proof}}{\end{prf}}
 \def\square{\QED}
 \newenvironment{proofofthm}{\begin{prf}{Proof of Theorem~\ref{ext}}}{\end{prf}}
 \def\square{\QED}
\newenvironment{sketchproof}{\begin{prf}{Sketch of Proof}}{\end{prf}}

%New macros added by ZC
\def\calu{{\cal \frakU}}
\def\si{\sigma}
\def\cee{\CX}
\def\HX{{\mathbb H}}
\def\GX{{\mathbb G}}
\def\Krd{{\rm Kr.dim}}
\def\dcl{{\rm dcl}}
\maketitle

\begin{abstract}{We show that a differential algebraic group can be filtered by a finite subnormal series of  differential algebraic  groups such that successive quotients are almost simple, that is  have no normal subgroups of the same type.  We give a uniqueness result, prove several properties of almost simple groups  and, in the  ordinary differential case, classify almost simple linear differential algebraic groups.}
\end{abstract}
%\newpage
%\tableofcontents
%\newpage
\section{Introduction} 
Let $(k, \dd) $ be a differential field of characteristic zero with derivation $\dd$ and let $k[\dd]$ be the ring of linear differential operators with coefficients in $k$.  It has been known for a long time  (see  \cite{singer_red} for a brief history and \cite{BCW05},\cite{PuSi2003},\cite{Wu05a} for more recent algorithmic results) that one has a form of unique factorization for this ring: 
\begin{quotation} \noindent Given $L \in k[\dd], \ L \notin k$, there exist irreducible $L_i \in k[\dd], i= 1, \ldots , r$ such that $L= L_1 \cdots L_r$. Furthermore, if $L = \tilde{L}_1, \cdots , \tilde{L_s}, \ \tilde{L}_i \in k[\dd], \tilde{L}_i$ irreducible, then $r = s$ and, for some permutation $\sigma$ of the subscripts, there exist nonzero $R_i, S_i \in k[\dd]$ with $\deg(R_i) < \deg(\tilde{L}_i)$ such that $L_{\sigma(i)} R_i = S_i \tilde{L}_i$ for $i = 1, \ldots, r$. \end{quotation}
A direct generalization of this to partial differential operators fails as the following example shows. In \cite{blumberg},  Blumberg considered the following  third order linear partial differential operator (which he attributes to E. Landau)
\begin{eqnarray*} L = \dd_x^3  + x\dd_x^2\dd_y + 2\dd_x^2 + 2(x+1)\dd_x\dd_y +\dd_x +(x+2)\dd_y
\end{eqnarray*}
  over the differential field $\CX(x,y)$ where $\dd_x = \frac{\partial}{\partial x}$ and $\dd_y = \frac{\partial}{\partial y}$. He showed that this operator has two factorizations
\begin{eqnarray*}
L & = & (\dd_x+1)(\dd_x+1)(\dd_x +x\dd_y)  \\ 
 & = & (\dd_x^2 + x\dd_x\dd_y + \dd_x + (x+2)\dd_y)(\dd_x+1) \ .
 \end{eqnarray*}  and that the operator $\dd_x^2 + x\dd_x\dd_y + \dd_x + (x+2)\dd_y$ could not be further factored in any differential extension of $\CX(x,y)$ (this example is also discussed in \cite{grig_schwarz_04}).  

To confront this situation one must recast the factorization problem in other terms.  For ordinary differential operators, the factorization result is best restated in terms of $k[\dd]$-modules.  If one considers the $k[\dd]$-module $M_L=k[\dd]/k[\dd]L$, then the above factorization result can be stated as a Jordan-H\"older type theorem:  $M_L$ has a composition series $M_L=M_0 \supset M_1 \supset \ldots \supset M_r = \{0\}$ where successive quotients $M_{i-1}/M_i$ are simple and any two such series have the same length and, after a possible renumbering, isomorphic quotients. Casting questions in terms of ideals and modules in  the partial differential case yields many interesting results (cf., \cite{grig_schwarz_04,grig-schwarz_07,gri_sch_08,grig_05}, \cite{cl_qua_08}, \cite{pom_01}). Other approaches to factoring linear operators are contained in \cite{beals},\cite{ shem07a,shem07}, \cite{tsarev2000a}. In \cite{tsa09}, Tsarev alludes to an approach to factorization through the theory of certain abelian categories that has points of contact with the approach presented here.

We will take an alternate approach - via differential algebraic groups.  This stems from the seemingly trivial observation that the solutions of a linear differential equation form a group under addition.  In fact, this is an example of a linear differential algebraic group, that is, a group of matrices whose entries lie in a differential field and satisfy some fixed set  of differential equations (here we identify $\{ y \in k \ | \ L(y) = 0 \}$ with the group $\{ \left(\begin{array}{cc} 1 & y\\0&1\end{array}\right) \ | \ L(y) = 0\}$.) We shall  prove a Jordan-H\"older type theorem for differential algebraic groups (of which linear differential algebraic groups are a special case.) This will encompass the factorization result for linear ordinary differential equations, allow us to give a ``factorization'' result for linear partial differential equations and allow us to reduce the study of general differential algebraic groups to the study of {\it almost simple} groups (to be defined below) and their extensions.

The rest of the paper is organized as follows. In the remainder of this section we review some basic definitions and facts from differential algebra. In Section~\ref{section_jht}, we shall review some definitions and facts from differential algebra and the theory of linear differential algebraic groups.  We will state and prove a Jordan-H\"older Theorem for linear differential algebraic groups and show how it applies to the example of Landau above. In Section~\ref{section_asg}, we will discuss the structure of almost simple groups in more detail. 

The authors would like to acknowledge the influence of Ellis Kolchin. The first author discussed her notions of solid and the core (what we call strongly connected and  the strong identity component) with him in the 1980's and he suggested a version of the Jordan-H\"older Theorem below. In addition, we would like to thank William Sit for explaining the connection between Gr\"obner bases of left ideals in the ring of differential operators and characteristic sets of linear differential ideals (see the discussion in Example~\ref{example213}).

\subsection{Differential Algebra Preliminaries}\label{sec1.1}  Throughout, we let $\mathbb{Z}=$ the ring of integers, $\mathbb{Q}=$ the field of rational
numbers, $\mathbb{C}=$ the field of complex numbers.

We refer to \cite{cassidy_quasi, cassidy1,cassidy2,cassidy3,cassidy6, DAAG, kolchin_groups} for the basic concepts of differential
algebraic geometry and differential algebraic groups. All rings have
characteristic zero. We fix a set $\Delta =\left\{ \delta _{1},\ldots
,\delta _{m}\right\} $ of commuting derivation operators. We often use the
prefix $\Delta$- to indicate the $\Delta$-differential structure on a
ring, field, group, \textit{etc}.  If $k$ is a $\Delta$-field, and 
\emph{\ }$R$ and $S$ are $\Delta$-$k$-algebras, a $k$-homomorphism $\varphi
:R\longrightarrow S$ is a $\Delta$-$k$-homomorphism if $\varphi \circ
\delta =\delta \circ \varphi $, $\delta \in \Delta $. Let $k$ be a $\Delta 
$-field. The $k$-algebra of differential operators generated by $\Delta$
is denoted by $k\left[ \Delta \right] $, and the set of monomials $\theta
=\delta _{1}^{e_{1}}\cdots \delta _{m}^{e_{m}}$ in $k[\Delta ]$ by $\Theta $%
. The \emph{order }of $\theta $ is the degree of $\theta $.

If $\eta =\left( \eta _{1},\ldots ,\eta _{n}\right) $ is a family of
elements of a $\Delta$-$k$-algebra, the $\Delta$-$k$-algebra $k\left\{
\eta \right\} $ is the $k$-algebra $k\left[ \Theta \eta \right] $, where $%
\Theta \eta $ is the family of derivatives $\left( \theta \eta _{j}\right)
_{\theta \in \Theta ,1\leq j\leq n}$ of the coordinates of $\eta $. Thus,
if $z_{1},\ldots ,z_{n}$ are elements of a $\Delta$-$k$-algebra such that
the family $\left( \theta z_{j}\right) _{\theta \in \Theta ,1\leq j\leq n}$
is algebraically independent over $k$, the $\Delta$-$k$-algebra $k\left\{
z\right\} =k\left\{ z_{1},\ldots ,z_{n}\right\} $ is called the $\Delta$%
-polynomial algebra$.$ Suppose $k\left\{ \eta \right\} $ is an integral
domain. Set $k\left\langle \eta \right\rangle $ equal to its quotient
field. We say that $k\left\{ \eta \right\} $ (\textit{resp,} $%
k\left\langle \eta \right\rangle $) is a \emph{finitely generated }$\Delta$-%
$k$-algebra (\textit{resp.} $\Delta$-$k$-field ). The quotient field $%
k\left\langle z\right\rangle $ of the $\Delta$-polynomial ring is called
the field of $\Delta$\emph{-rational functions. }Let $\left( P_{i}\right)
_{i\in I}$ be a family of $\Delta$-polynomials in $k\left\{ z_{1},\ldots
,z_{n}\right\} $. The $\Delta$-ideal $\left[ \left( P_{i}\right) _{i\in I}%
\right] $ equals the ideal $\left( \left( \Theta P_{i}\right) _{i\in
I}\right) $.

Fix a $\Delta$-field $k$.  A $\Delta$-$k$-field $K$ is \emph{
differentially closed }if given a prime $\Delta$-ideal $I$ in $K\left\{
z_{1},\ldots ,z_{n}\right\} $ and differential polynomial $Q\notin I$, there
exists $\eta =\left( \eta _{1},\ldots ,\eta _{n}\right) \in K^{n}$, with 
\[
Q\left( \eta \right) \neq 0,\text{ \ \ and }P\left( \eta \right) =0\text{ \
\ for all }P\in I\text{.} 
\]%
Equivalently, $K$ is differentially closed if every system of differential
polynomial equations and inequations that has a solution with coordinates in
a $\Delta$-$K$-field has a solution with coordinates in $K$ (see \cite{kolchin_constrained}, \cite{marker2002}, \cite{mmp}, \cite{pillay97}).  We say that a $\Delta$-$k$-field $\UX$ is {\it $k$-universal} if, in addition to being differentially closed, $\mathbb{U}$ satisfies the useful property that if $I$ is a
prime $\Delta$-ideal in $k\left\{ z_{1},\ldots ,z_{n}\right\} $, there is
in $\mathbb{U}^{n}$ a \emph{generic zero} of $I$. A generic zero $\eta $
is a zero of $I$ \ that has the property that if $P$ is in $k\left\{
z_{1},\ldots ,z_{n}\right\} $ and $P\left( \eta \right) =0$, then $P$ is in $%
I$. We fix $k$ and $\mathbb{U}$.

%\subsection{(Section 1.1.1) \ The Kolchin topology on affine $n$-space -- $%
%\Delta $-subvarieties of $\mathbb{U}^{n}$-- linear $\Delta$-groups.}

We define on \emph{affine }$n$-\emph{space }$\mathbb{U}^{n}$ both the
Zariski and Kolchin topologies. The \emph{Kolchin topology}, which is
finer than the Zariski topology, is also Noetherian. A subset $X$ of $%
\mathbb{U}^{n}$ is a $\Delta$-\emph{variety }if it is \emph{Kolchin closed}%
, \textit{i.e.}, $X$ is the set of zeros of a set of $\Delta$-polynomials
with coefficients in $\mathbb{U}$. By the Ritt Basis Theorem, there is a
finite set $P_{1},\ldots ,P_{r}$ of $\Delta$-polynomials with coefficients
in $\mathbb{U}$ such that $X$ is the set of zeros of $P_{1},\ldots ,P_{r}$.
\ Call $P_{1},\ldots ,P_{r}$ \emph{defining }$\Delta$\emph{-polynomials }of%
\emph{\ }$X$. The radical $\Delta$-ideal $I=\surd \left[ P_{1},\ldots
,P_{r}\right] $ in $\mathbb{U}\left\{ y\right\} $ generated by $P_{1},\ldots
,P_{r}$, is called the \emph{defining }$\Delta$-ideal of $X$. If $%
P_{1},\ldots ,P_{r}$ have coefficients in $k$, we say that $X$ is \emph{%
defined over }$k$, and call it a $\Delta$-$k$-variety. Let $Y$ be a $\Delta$-subvariety of a $\Delta$-$k$-variety $X$.  We do not assume that $Y$ is defined over $k$. However, there is a finitely generated delta-field extension $K$ of $k$ such that $Y$ is a $\Delta$-$K$-variety. $X$ is \emph{%
irreducible} if and only if $I$ is prime. The Ritt Basis Theorem implies
that $X$ is a finite union of maximal irreducible $\Delta$-subvarieties.  This implies that the Kolchin topology is Noetherian.
In this paper, we will usually assume that $X$ is irreducible, with defining 
$\Delta$-ideal $I$. The {\it residue class ring} $\mathbb{U}\left\{ z_1,\ldots
,z_{n}\right\} /I$ is a $\Delta$-$\mathbb{U}$-algebra and an integral
domain, which we denote by $\mathbb{U}\left\{ X\right\} $. As in algebraic
geometry, we regard the residue classes of the $\Delta$-indeterminates as
coordinates on $X$, and call the elements of $\mathbb{U}\left\{ X\right\} $ $%
\Delta$\emph{-polynomial functions} on $X$.  If $X$ is a $\Delta$-$k$-variety, the $\Delta$-$k$-subalgebra $k\left\{ X\right\} $, consisting of
the elements of $\mathbb{U}\left\{ X\right\} $ that have coefficients in $k$%
, is $\Delta$-$k$-isomorphic to $k\left\{ z_{1},\ldots ,z_{n}\right\}
/(I\cap k\left\{ z_{1},\ldots ,z_{n}\right\} ).$ If $X$ is irreducible, the elements of the
quotient field $\mathbb{U}\left\langle X\right\rangle $ of $\mathbb{U}%
\left\{ X\right\} $ (\textit{resp. }$k\left\langle X\right\rangle $ of $%
k\left\{ X\right\} $) are called $\Delta$-rational functions (\textit{resp.~}$\Delta$-$k$-rational functions) on $X$.  A $\Delta$-rational function $%
f $ in $\mathbb{U}\left\langle X\right\rangle $ is \emph{everywhere defined}
if for every $\eta \in $ $X$, there exist $p,q\in \mathbb{U}\left\{
X\right\} $ such that $q\left( \eta \right) \neq 0$, and $f\left( \eta
\right) =\frac{p\left( \eta \right) }{q\left( \eta \right) }$. In contrast
with rational functions on algebraic varieties, an everywhere defined $%
\Delta $-rational function need not be in the ring of $\Delta$-polynomial
functions on $X$.  Indeed, it may not have a global denominator.  Let $%
\eta =\left( \eta _{1},\ldots ,\eta _{n}\right) $ be in $\mathbb{U}^{n}$,
and let $\sigma $ be a $\Delta$-$k$-automorphism of $\mathbb{U}$.  We
define $\sigma \eta =\left( \sigma \eta _{1},\ldots ,\sigma \eta _{n}\right) 
$. $\sigma \left( X\right) $ is a $\Delta$-variety.  A necessary and
sufficient condition that $X$ be defined over $k$ is that $\sigma \left(
X\right) \subseteq X$ (equivalently, $\sigma \left( X\right) =X$) for every $%
\Delta $-$k$-automorphism of $\mathbb{U}$. 

If $X$ is an irreducible $\Delta$-$k$-variety, an element $\eta =\left(
\eta _{1},\ldots ,\eta _{n}\right) $ is generic over $k$ for $X$ if $\eta $
is a generic zero of $I\cap k\left\{ z_{1},\ldots z_{n}\right\} $, where $I$
is the defining $\Delta$-ideal of $X$. The $\Delta$-field $k\left\langle
\eta \right\rangle $ is $\Delta$-$k$-isomorphic to $k\left\langle
X\right\rangle $.  It represents in $\mathbb{U}$ the field of rational
functions on $X.$

If $X\subseteq \mathbb{U}^{n}$ and $Y\subseteq \mathbb{U}^{p}$ are
irreducible $\Delta$-$k$-varieties, a $\Delta$\emph{-morphism} $%
f:X\longrightarrow Y$ is a map whose $p$ coordinate functions are everywhere
defined $\Delta$-rational functions. If they are $\Delta$-$k$-rational,
we call $f$ a $\Delta$-$k$-morphism. We should note that if $f$ is a $%
\Delta $-morphism, $f\left( X\right) $ and its Kolchin closure are
irreducible, and are defined over $k$ if $f$ is a $\Delta$-$k$-morphism.
A $\Delta$\emph{- morphism }$f:X\longrightarrow Y$ is \emph{dominant }if
the Kolchin closure of $f\left( X\right) $ is $Y$. In this case, $f\left(
X\right) $ contains a Kolchin dense open subset of $Y$ ($f\left( X\right) $
is \emph{constructible}). If $X$ is an arbitrary $\Delta$-$k$-variety
with components $X_{1},\ldots ,X_{s}$, and $Y$ is a $\Delta$-$k$-variety
with components $Y_{1},\ldots ,Y_{t}, $ we can define a $\Delta$-morphism to be an $s$-tuple of $\Delta$-morphisms $%
f_{i}:X_{i}\longrightarrow Y_{j_{i}\text{ }}$, $i=1,\ldots ,s$. 

If $X\subseteq \mathbb{U}^{n}$and $Y\subset \mathbb{U}^{p}$ are $\Delta$%
-varieties, then the Cartesion product $X\times Y$ is a $\Delta$-subvariety
of $\mathbb{U}^{n+p}$, and the projection maps are $\Delta$-morphisms.

An \emph{affine }$\Delta$-group is a group $G$ whose underlying set is a $%
\Delta $-subvariety of $\mathbb{U}^{n}$ for some $n$, and whose group laws
are $\Delta$-morphisms.  A $\Delta$-homomorphism of affine $\Delta$%
-groups is a homomorphism that is also a $\Delta$-morphism of $\Delta$%
-varieties. As in algebraic group theory, $G$ has a finite number of
connected components. We call $G$ a $\Delta$-$k$-group if its group laws
and components are defined over $k$ and its identity element has coordinates
in $k$. Note that a $\Delta$-subgroup of $G$ is not necessarily a $\Delta$-$k$-subgroup, but is defined over a finitely generated $\Delta$-$k$-field in $\UX$. Not all affine $\Delta$-groups are linear (that is, isomorphic to a $\Delta$-subgroup of $\GL_n(\UX)$ for some $n$). In this paper, we
will assume that a linear $\Delta$-group is embedded in $\GL_{n}( 
\mathbb{U})$, for some $n$.

In \cite{kolchin_groups}, Kolchin develops axiomatically a theory of $\Delta$-$k$-groups and 
$\Delta$-$k$-varieties, extending the affine theory. In Chapter V.3 of
\cite{kolchin_groups}, Kolchin proves that a $\Delta$-$k$-variety $X$ has a $k$-affine open
dense subset $U$. In the corollary, p. 140, he shows that by extending $k$
perhaps to a finitely generated $\Delta$-extension field, we may assume
that there is a covering of $X$ by open dense subsets, each of which is
defined over $k$ and $k$-affine. Although we will state and prove our
results in the general context of \cite{kolchin_groups}, readers who wish to restrict
themselves to \emph{linear }$\Delta$-groups, or just to the solution sets
of linear homogeneous differential equations, will find almost all that is
needed (with one notable exception which we will discuss later) in the
papers \cite{cassidy1,cassidy2}. Another source is Anand Pillay's foundational paper
\cite{pillay97}, which treats $\Delta$-groups as \emph{definable groups }in
differentially closed fields$.$The theory of definable groups is
particularly well-suited to interpret Kolchin's axiomatic approach.

  \section{Jordan-H\"older Theorem}\label{section_jht} 
\subsection{Differential Type and Typical Differential Transcendence Degree}\label{subsec2.1}
The key concepts used in this paper are \emph{differential type }and \emph{%
typical differential transcendence degree} (typical differential dimension).
 We briefly review their definitions.  Let $k$ be a $\Delta $-field
finitely generated over $\mathbb{Q}$. \\[0.1in]
Let $\eta =\left( \eta _{1},\ldots ,\eta _{n}\right) $ be in $%
\mathbb{U}^{n}$ and let $\Theta \left( s\right) $ be the set of monomial
operators in $\Theta $ of order $\leq s$. In Section II.12 of \cite{DAAG},
Kolchin shows that there is a numerical polynomial $\omega _{\eta/k}(s)$ (%
\textit{i.e.}, a polynomial taking on integer values on $\mathbb{Z}$),
called the \emph{differential dimension polynomial}, of degree $\leq m$,
such that for large values of $s$, $\omega _{\eta/k}(s)$ is the
transcendence degree over $k$ of $k\left( \left( \theta \eta _{i}\right)
\right) _{\theta \in \Theta \left( s\right) ,1\leq i\leq n}$ for all
sufficiently large values of $s$.  See Example~\ref{example213} for further discussion
about the computation of the differential dimension polynomial, and also
 \cite{DAAG} for further information, and algorithms, as well as references.  We
may write%
\[
\omega _{\eta/k}(s)=\sum_{i=0}^{m}a_{i}\binom{s+i}{i}, \ \ a_{i}\in \mathbb{Z}. 
\]%
If $I$ is a prime $\Delta $-ideal in the differential polynomial ring $%
\mathbb{U}\left\{ z_{1},\ldots ,z_{n}\right\} $, and $k$ is a differential
field of definition of $I$, we may define the differential dimension
polynomial of $I$ to be $\omega _{\eta/k}(s),$where $\eta $ is a generic
zero of $I\cap k\left\{ z_{1},\ldots ,z_{n}\right\} $. $\omega _{\eta/k}(s)$
is independent of the choice of $k$ and $\eta $.  If $X$ is the Kolchin
closed subset of $\mathbb{U}^{n}$ defined by $I$, and $\eta $ is generic for 
$X$ over a field $k$ of definition, we define the dimension polynomial $%
\omega \left( X\right) $ to be $\omega _{\eta/k}(s)$.\\[0.1in] 
The dimension polynomial $\omega \left( X\right) $ is not, however, a
differential birational invariant. This means that it makes no sense to
speak of the dimension polynomial of an arbitrary irreducible $\Delta $-$k$-variety. There are, however, two important differential birational
invariants singled out by Kolchin:  the \emph{degree }$\tau $ and the\emph{%
\ leading coefficient }$a_{\tau }$ of $\omega \left( X\right) $.  If $X$ is
any irreducible $\Delta $-$k$-subvariety of $\mathbb{U}^{n}$, we call the
degree of $\omega \left( X\right) $ the \emph{differential type }(or $\Delta 
$-type) $\tau \left( X\right) $ \emph{of }$X$, and the leading coefficient
the \emph{typical differential dimension }(or $\Delta $-dimension) $a_{\tau
}\left( X\right) $ \emph{of }$X$. Let $X$ be a $\Delta $-$k$-variety. As
we mentioned in Section~\ref{sec1.1}, we may assume that $k$ is such that there is a
finite set of Kolchin open $k$-affine subsets $U$ of $X$ covering $X$. 
There exists a positive integer $n$ such that each open set $U$ in the
covering is $\Delta $-$k$-rationally isomorphic to a $\Delta $-$k$%
-subvariety $Y$ of $\mathbb{U}^{n}$.  If $X$ is irreducible, so are $U$ and 
$Y$.  In this case, we define $\tau \left( U\right) $ to be $\tau \left(
Y\right) $, and $a_{\tau }\left( U\right) $ to be $a_{\tau }\left( Y\right) $%
. Since $\Delta $- type and typical $\Delta $-dimension are differential
birational invariants, all the open sets in the covering have the same $%
\Delta $- type $\tau $ and typical $\Delta $-dimension $a_{\tau }$.  So,
we can define $\tau \left( X\right) =\tau $ and $a_{\tau }\left( X\right)
=a_{\tau }$. We should note that a point of $X$ is generic for $X$ over $k$
if and only if whenever $U$ is a $k$-affine $k$-open subset of $X$
containing the point, and $Y$ is a $\Delta $-$k$-subvariety of $\mathbb{U}%
^{n}$ that is $\Delta $-$k$-rationally isomorphic to $U$, the coordinate $n$-tuple $\eta =\left( \eta _{1},\ldots ,\eta _{n}\right) $  is
generic over $k$ for $Y$. By abuse of language, we will call the generic
point $\eta $.  Thus, $\tau \left( X\right) $ is the degree and $a_{\tau
}\left( X\right) $ is the leading coefficient of $\omega_{\eta/k}\left(
s\right) $. If $X$ is not irreducible, we define $\tau= \tau \left( X\right) $ to
be the maximum of the differential type of its irreducible components, and $%
a_{\tau }\left( X\right) $ to be the maximum of their typical differential
dimensions.  The connected components of a $\Delta $-$k$-group $G$ all have
the same $\Delta $-type and typical $\Delta $-dimension.  We should also
mention that if $X$ is an irreducible $\Delta $-subvariety of a $\Delta $-$k$-group or $\Delta $-$k$-homogeneous space and $\sigma $ is a $\Delta $-automorphism of $\mathbb{U}$ over $k$, then $\tau \left( \sigma \left(
X\right) \right) =\tau \left( X\right) $ and $a_{\tau }\left( \sigma \left(
X\right) \right) =a_{\tau }\left( X\right) $.\\[0.1in]
Grigoriev and Schwarz, \cite{grig_05,grig_schwarz_04, grig-schwarz_07, gri_sch_08}, have made extensive use of
differential type $\tau $ and typical differential dimension $a_{\tau }$ in
their studies of systems of linear homogeneous partial differential
equations. \ Following Grigoriev and Schwarz, we will call the pair $\left(
\tau \left( X\right) ,a_{\tau }\left( X\right) \right) $ the \emph{gauge }\
of $X$.\\[0.1in]
Let $C_{k}$ be the field of $\Delta $-constants of $k$.  A\emph{\
transformation} of $\Delta $ is a set of derivation operators on $k$,
denoted by the symbol $\Delta ^{c}$, where $c=\left( c_{ij}\right) _{1\leq
i,j\leq m}\in GL_{m}\left( C_{k}\right) $ and $\Delta ^{c}$ is the set 
\[
\delta _{i}^{^{\prime }}=\sum_{j=1}^{m}c_{ij}\delta _{j},\ \ 
i=1,\ldots ,m
\]%
Every $\Delta $-field extension of $k$ is also a $\Delta ^{c}$-extension of $%
k$. \ Suppose $L=k\left\langle \eta \right\rangle $, \ with $\eta =\left(
\eta _{1},\ldots ,\eta _{n}\right) $ a finite family of elements of $L$. 
In Theorem 7 of \cite{DAAG}, Kolchin established the significance of $\Delta $-type
and typical $\Delta $-transcendence degree by showing that if $\tau =\tau
_{\eta/k}$ is the $\Delta $-type of $\eta/k$, then there is a
transformation $\Delta ^{c}$ of $\Delta $ and a subset $\Delta ^{\prime }$
of cardinality $\tau $ such that $L$ is a finitely generated $\Delta
^{\prime }$-field extension of $k$ of $\Delta ^{\prime }$-differential
transcendence degree $a_{\tau }$. \ In fact, there is a Zariski open dense
subset of $GL_{m}\left( C_{k}\right) $ with this property.  Since every $%
\Delta $-extension field is a $\Delta ^{c}$-field extension and conversely,
and $\Delta $-dimension polynomials of $\Delta $-$k$-varieties are invariant
under such transformations, we may assume that $\Delta ^{c}=\Delta $, and,
thus, $\Delta ^{\prime }\subset \Delta $. \ Let $K$ be a $\Delta $-subfield
of $L$ containing $k$. \ Then, $K$ is a $\Delta ^{\prime }$-finitely
generated $\Delta ^{\prime }$-extension field of $k$ (Proposition 14, p.
112 of \cite{DAAG}).\\[0.1in]
Since the $\Delta $-type of $\eta/k$ and the typical $\Delta $%
-transcendence degree of $\eta/k)$ are differential birational invariants,
we can refer to them as the $\Delta $-type of $L$ over $k$ and the typical $%
\Delta $-transcendence degree of $L$ over $k.$\\[0.1in]
From the classical interpretation of $\Delta $-type and typical $\Delta $%
-dimension, one expects the truth of the following lemma.

\begin{lem}
Let $L$ be a $\Delta $-finitely generated $\Delta $-extension field of $k$,
and let $K$ be a $\Delta $-subfield of $L$ containing $k$. Then, the $%
\Delta $-type of $K/k$ is less than or equal to the $\Delta $-type of $L/k$,
and the typical $\Delta $-transcendence degree of $K/k$ is less than or
equal to the typical $\Delta $-transcendence degree of $L/k$.
\end{lem}

\begin{proof}
Let $\Delta ^{\prime }\subset \Delta $ be such that $L/k$ is a $\Delta
^{\prime }$-finitely generated extension field. Then, since $K/k$ is also $\Delta ^{\prime }$%
-finitely generated, if $\Delta ^{\prime \prime }$ is any subset of $\Delta $
of larger cardinality than $\Delta ^{\prime }$, $K$ is $\Delta ^{\prime
\prime }$-algebraic over $k$ (\cite{DAAG}, Proposition 12, p. 109. See also
\cite{sit74}, Proposition 2.4, p. 478.) \ It follows that the $\Delta $-type of $K/k 
$ is less than or equal to the $\Delta $-type of $L/k$. If the inequality
of $\Delta $-types is strict, so is the inequality of $\Delta $%
-transcendence degrees. So, suppose $\Delta $-type $($ $K/k)=\Delta $-type 
$($ $L/k)$. Then, the typical $\Delta $-transcendence degree of $K/k$ is
the cardinality of a $\Delta ^{\prime }$-transcendence basis of $K/k$. \
Since it can be extended to a $\Delta ^{\prime }$-transcendence basis of $%
L/k $ (Theorem 4(d), p. 105 of \cite{DAAG}), it follows that the $\Delta $%
-transcendence degree of $K/k$ must be less than or equal to that of $L/k$.
\end{proof}

\begin{cor}
Let $X$ and $Y$ be $\Delta $-$k$-varieties, and let $f:X\longrightarrow Y$
be a dominant $\Delta $-$k$-morphism. Then, $\tau \left( X\right) \geq
\tau \left( Y\right) $ and $a_{\tau }\left( X\right) \geq a_{\tau }\left(
Y\right) $.
\end{cor}

\begin{proof}
Let $\eta$ be generic for $X$ over $k$. Then, $\zeta=f\left( \eta\right) $ is
generic for $Y$ over $k$. Therefore, $k\left\langle \zeta\right\rangle $ is a $%
\Delta $-subfield of $k\left\langle \eta\right\rangle $. The conclusion then
follows from the preceding lemma.
\end{proof}

\begin{lem}Let $X$ and $Y$ be irreducible $\Delta$-$k$-varieties and $f:X\rightarrow Y$ be an injective dominant  $\Delta$-$k$-morphism.  Then $\tau(X) = \tau(Y)$.
\end{lem}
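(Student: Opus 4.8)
The plan is to reduce the statement to a comparison of differential function fields at the generic points and then invoke the invariance of the differential type under the choice of generators. Since $X$ and $Y$ are irreducible, I would choose a generic point $\eta$ of $X$ over $k$; the generic point carries the maximal dimension polynomial among the points of $X$, so $\tau(X) = \tau(\eta/k)$, the differential type of the extension $k\langle\eta\rangle/k$. Because $f$ is dominant, $\zeta := f(\eta)$ is a generic point of $Y$ over $k$, and likewise $\tau(Y) = \tau(\zeta/k)$. As $f$ is a $\Delta$-$k$-morphism, each coordinate of $\zeta$ is a $\Delta$-rational function of $\eta$ with coefficients in $k$, so $k\langle\zeta\rangle \subseteq k\langle\eta\rangle$; this inclusion of differential fields already gives $\tau(Y)\le\tau(X)$ by monotonicity of the type under subextensions, and it is the place where dominance is used.

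The heart of the matter is to obtain the reverse inclusion $k\langle\eta\rangle\subseteq k\langle\zeta\rangle$ from injectivity, that is, to show $\eta\in k\langle\zeta\rangle$. Working in the universal field $\frakU$, the graph of $f$ is a $\Delta$-$k$-set, and injectivity says that its projection to $Y$ is one-to-one on $\frakU$-points; consequently the inverse of $f$, defined on the image, is again a $\Delta$-$k$-map. Applying it to $\zeta$ recovers $\eta$, so $\eta\in\dcl(k\langle\zeta\rangle)=k\langle\zeta\rangle$, the last equality because in a universal (differentially closed) field of characteristic zero the definable closure of a differential field is the field itself. Equivalently, in Kolchin's language, injectivity forces the generic fibre $f^{-1}(\zeta)$ to consist of the single $\frakU$-point $\eta$; since we are in characteristic zero this fibre is reduced, a single geometric point, and therefore $\eta$ is $k\langle\zeta\rangle$-rational. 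Either way, $k\langle\eta\rangle = k\langle\zeta\rangle$.

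Given the equality $k\langle\eta\rangle=k\langle\zeta\rangle$ of differential field extensions of $k$, it remains to recall that the differential type is an invariant of the extension and does not depend on the chosen generating tuple: although the dimension polynomial $\omega_{\eta/k}$ itself varies with the generators, its degree $\tau$ (and leading coefficient $a_\tau$) are intrinsic, by Kolchin's results in Section~II.12 of \cite{DAAG}. Hence $\tau(X)=\tau(\eta/k)=\tau(\zeta/k)=\tau(Y)$, as desired. I expect the main obstacle to be the middle step, namely the passage from set-theoretic injectivity to the differential-birationality $k\langle\eta\rangle=k\langle\zeta\rangle$; the cleanest route is the definable-inverse argument in the theory of differentially closed fields ($\mathrm{DCF}_0$), but one must check that the inverse is genuinely a $\Delta$-$k$-map and that characteristic zero is used to rule out any inseparable (nonreduced) behaviour of the generic fibre.
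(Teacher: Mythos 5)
Your proposal is correct and is essentially the paper's own proof: both pass to a generic point $\eta$ of $X$, use dominance to see that $\zeta=f(\eta)$ is generic in $Y$, reduce the lemma to the field equality $k\langle\eta\rangle=k\langle\zeta\rangle$, and finish by invoking the invariance of the differential type under change of generators (Kolchin, Ch.~II.12). The only difference is packaging: where you cite the $\mathrm{DCF}_0$ facts that $\eta$, being the unique preimage of $\zeta$, lies in $\dcl(k\langle\zeta\rangle)$ and that the definable closure of a differential field is the field itself, the paper proves exactly this inline --- any differential isomorphism $\sigma$ of $k\langle\eta\rangle$ over $k\langle\zeta\rangle$ into $\frakU$ satisfies $f(\sigma(\eta))=\sigma(f(\eta))=\zeta$, hence fixes $\eta$ by injectivity, and universality of $\frakU$ then forces $\eta\in k\langle\zeta\rangle$; that automorphism argument is precisely the standard proof of the definable-closure fact you invoke.
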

\begin{proof} Let $\eta=(\eta_1, \ldots \eta_n)$ be a generic point of $X$ over $k$. By assumption $\zeta = f(\eta)$ is a generic point of $Y$ over $k$ and, since $f$ is differentially rational, $k<\zeta> \subset k<\eta>$. Note that $\frakU$ is universal over $k<\zeta>$ as well. We will show that $k<\zeta> = k<\eta>$. Let $\sigma$ be a differential isomorphism of $k<\eta>$ over $k<\zeta>$ into $\frakU$. Since $\sigma(\zeta) = \zeta$ and  $\sigma(\zeta) =\sigma(f(\eta)) = f(\sigma(\eta))$ we have that  $f(\sigma(\eta)) = f(\eta)$.  Since $f$ is injective, we have that $\sigma(\eta) = \eta$. If some coordinate $\eta_i$ of $\eta$ were not in $k<\zeta>$, there would be an isomorphism of $k<\eta>$ over $k<\zeta>$ into $\frakU$ which would move $\eta_i$.  Therefore we can conclude that $k<\eta>=k<\zeta>$. Proposition 15(b), p. 117 of \cite{DAAG} implies the conclusion.\end{proof}
\begin{cor}\label{taucor} Let $X$ and $Y$ be irreducible $\Delta$-$k$-varieties  and $f:X\rightarrow Y$ be an injective  $\Delta$-$k$-morphism. Then $\tau(X) \leq \tau(Y)$.
\end{cor}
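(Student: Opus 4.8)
The plan is to reduce to the dominant case already settled by the preceding Lemma. Given the injective morphism $f:X\rightarrow Y$, the image $f(X)$ need not be dense in $Y$, so the Lemma does not apply directly; the idea is to replace $Y$ by the Kolchin closure of the image. Concretely, I would let $x=(x_1,\ldots,x_n)$ be a generic point of $X$ over $k$, set $y=f(x)$, and let $Y'$ denote the $\Delta$-$k$-locus of $y$ over $k$, i.e.\ the smallest $\Delta$-$k$-closed subset of $Y$ containing $y$; equivalently $Y'=\overline{f(X)}$, the Kolchin closure of $f(X)$. Being the locus of a single point, $Y'$ is an irreducible $\Delta$-$k$-set carrying $y$ as a generic point.

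Next I would observe that the corestriction $f:X\rightarrow Y'$ is both injective and dominant. Injectivity is inherited from $f$, and the corestriction is still a $\Delta$-$k$-morphism since $Y'$ is a $\Delta$-$k$-subset of $Y$ containing the image. Dominance holds because $y=f(x)$ is, by construction, a generic point of $Y'$, so $f(X)$ is dense in $Y'$. The preceding Lemma then applies to $f:X\rightarrow Y'$ and yields $\tau(X)=\tau(Y')$.

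Finally I would compare $\tau(Y')$ with $\tau(Y)$ using the monotonicity of differential type under inclusion. Since $Y'\subseteq Y$, every element of $Y'$ with entries in $\frakU$ is also an element of $Y$; as $\tau$ was defined as the maximum of the types of such elements, we get $\tau(Y')\leq\tau(Y)$. Combining the two facts gives $\tau(X)=\tau(Y')\leq\tau(Y)$, as desired.

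The only step requiring any care is the first one: one must know that the closure of the image of an irreducible set is again an irreducible $\Delta$-$k$-set and that it carries $y$ as a generic point, so that the dominance hypothesis of the Lemma is genuinely met. This is precisely the locus construction of differential algebra, so it is not a serious obstacle. The real content of the corollary lies entirely in the preceding Lemma; the corollary is essentially a repackaging of it together with the trivial monotonicity of $\tau$ under inclusion.
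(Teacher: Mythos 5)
Your proof is correct and is essentially identical to the paper's: the paper also passes to the Kolchin closure $V$ of the image, applies the preceding Lemma to the injective dominant map $f:X\rightarrow V$ to get $\tau(X)=\tau(V)$, and then uses monotonicity of type under inclusion (citing Proposition 15(b), p.~117 of Kolchin's book, where you instead invoke the definition of $\tau$ as a maximum over points — both are fine given the paper's definition). No gaps; the extra care you take with the locus/generic-point justification is exactly what makes the dominance hypothesis of the Lemma legitimate.
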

\begin{proof} Let $V$ be the closure of $f(Y)$.  The map $f:X \rightarrow V$ is injective and dominant so $\tau(X) = \tau(V)$. Proposition 15(b), p. 117 of \cite{DAAG} implies that $\tau(V) \leq \tau(Y)$.\end{proof}
Let $G$ be a $\Delta$-$k$-group and $H$ a $\Delta$-$k$-subgroup of $G$.  Kolchin showed (\cite{kolchin_groups} Ch. IV.4) that the coset space $G/H$ has the structure of a $\Delta$-$k$-variety (in fact, a $\Delta$-$k$-homogeneous space for $G$) and  showed that
\begin{eqnarray}\label{eqn1}
\tau(G) & = & \max(\tau(H), \tau(G/H)).
\end{eqnarray}
Furthermore, he showed that if $\tau = \tau(G)$, then
\begin{eqnarray}\label{eqn2}
a_\tau(G) & = & a_\tau(H) + a_\tau(G/H)
\end{eqnarray}
where $a_\tau(H) = 0$ if $\tau(G)> \tau(H)$  with a similar convention for $a_\tau(G/H)$.

\begin{remarks}\label{rem1}{\rm 1. The fact that $G/H$ has the structure of a $\Delta$-$k$-variety also follows from the model-theoretic concept of {\it elimination of imaginaries} which holds for differentially closed fields (\cite{mmp}, p. 57).\\[0.1in]
2. The simplest (to define, not necessarily to understand) examples of differential algebraic groups are zero sets of systems of linear homogeneous differential equations in one indeterminate. Cassidy \cite{cassidy1} has shown that these are the only $\Delta$-subgroups of $\Ga(\frakU) = (\frakU,+)$  In this case, quotients are easy to construct. If $H\subset G\subset \Ga(\frakU)$ and $H$ is defined by the vanishing of $L_1, \ldots, L_t$, then the image in $\frakU^t$ of $G$ under the map $y\mapsto (L_1(y), \ldots , L_t(y))$ is a $\Delta$-subgroup of $\frakU^n$ isomorphic to $G/H$.   \\[0.1in]
3. As we mentioned earlier, if one restricts one's attention to {\it linear} differential algebraic groups (as defined by Cassidy \cite{cassidy1}), one will find all needed facts in  the works of Cassidy and \cite{DAAG} except for the equations~(\ref{eqn1}) and (\ref{eqn2}), which one can just assume when reading the following results and proofs. }
\end{remarks}
\subsection{Strongly Connected Groups, Almost Simple Groups and Isogeny}
\subsubsection{Strongly Connected Groups}We now define the {\it strong identity component} of a $\Delta$-$k$-group $G$. Let $\calS$ be the set of all $\Delta$-subgroups $H \subset G$ such that $\tau(H) = \tau(G) $ and $a_\tau(H) =a_\tau(G)$. Note that from (\ref{eqn1}) and (\ref{eqn2}), this is the same as the set of $\Delta$-subgroups $H\subset G$ such that $\tau(G/H) < \tau(G)$.   We claim that if $H_1, H_2 \in \calS$, then $H_1 \cap H_2 \in \calS$.  To see this, note that we have an injection of $H_1/H_1\cap H_2$ into $G/H_2$, so by (\ref{eqn1}) we have $\tau(H_1/H_1\cap H_2) < \tau(G)$. Again by (\ref{eqn1}), we have that $\tau(G/H_1\cap H_2) = \max(\tau(G/H_1), \tau(H_1/H_1\cap H_2)) < \tau(G)$. Therefore $H_1 \cap H_2 \in \calS$.  \\[0.1in]
Since the Kolchin topology is Noetherian, the set $\calS$ has minimal elements.  The above argument shows that there is a unique minimal element  and justifies the following definition.
\begin{defin} Let $G$ be a $\Delta$-$k$-group. The {\rm strong identity component} $G_0$ of $G$ is defined to be the smallest $\Delta$-$\frakU$-subgroup $H$  of $G$ such that $\tau(G/H) < \tau(G)$.  We say that $G$ is {\rm strongly connected} if $G_0 = G$.
\end{defin}
Note that $G_0$ is contained in the identity component $G^0$.  Therefore, every strongly connected $\Delta$-group is connected.
\begin{remarks}\label{remarksolid}{\rm 1.~In the usual theory of algebraic groups, one can define the identity component of a group $G$ to be the smallest subgroup $G^0$ such that $G/G^0$ is finite and define a group to be connected if $G = G^0$. The notions of strong identity component and strongly connected are meant to be refinements in the context of $\Delta$-groups of these former concepts.\\[0.1in]
2. If $\sigma$ is a $\Delta$-$k$-automorphism of $\frakU$, then $\sigma(G_0)$  is again a minimal element of $\calS$ and so must coincide with $G_0$.  Therefore, Corollary 2, p.77 of \cite{kolchin_groups} implies that $G_0$ is defined over $k$.  Note that if $\phi:G\rightarrow G$ is a $\Delta$-automorphism of $G$, then $\phi(G_0)$ is again a minimal element of $\calS$ and so must equal $G_0$. Therefore $G_0$ is a characteristic subgroup of $G$. In particular  $G_0$ is a {\it normal} subgroup of $G$ and if $H$ is a normal $\Delta$-subgroup of $G$, the strong identity component of $H$ is again normal in $G$.\\[0.1in]
3.  The analogue of the fact that if $G$ is an algebraic group, every
connected algebraic subgroup is contained in the identity component of $G$
is the following: \ If $G$ is a $\Delta$-group of type $\tau $, then every
strongly connected $\Delta$-subgroup $H$ of type $\tau $ is contained in the strong identity component of $G$.  To see this, first note that  the usual isomorphism theorems hold for $\Delta$-groups (\cite{kolchin_groups}, Chapter IV). Since $G_0$ is normal in $G$, $HG_0$ is a $\Delta$-subgroup of $G$ and $HG_0/G_0$ is isomorphic to $H/G\cap G_0$ as $\Delta$-groups. Since the canonical inclusion  $H/H\cap G_0 \hookrightarrow G/G_0$ is injective, Corollary~\ref{taucor} implies that $\tau(H/H\cap G_0) < \tau(G) = \tau(H)$.  Since $H$ is strongly connected, $H\cap G_0 = H$.    \\[0.1in]
4. The analogue of the fact that if $G$, $G^{\prime }$ are algebraic
groups, and $\phi :G\longrightarrow G^{\prime }$ is a homomorphism, then,
the image of the identity component is contained in the identity component
of $G^{\prime }$ is the following: \ If $G$, $G^{\prime }$ are $\Delta$%
-groups of the same type, and $\phi :G\longrightarrow G^{\prime }$ is a
homomorphism, then, the image of the strong identity component of $G$ is contained in the strong identity component of 
$G^{\prime }$. This follows immediately from the preceding remark.\\[0.1in]
5. Let $G$ be a strongly connected $\Delta$-group.  Let $G^{\prime }$ be a nontrivial $\Delta $-group, and let $\varphi
:G\longrightarrow G^{\prime }$ be a surjective $\Delta $-homomorphism. 
Then, $G^{\prime }$ is strongly connected and the type $\tau \left(
G^{\prime }\right) $ equals the type $\tau $ of $G$.  For, Corollary 2.2
says that $\tau \left( G^{\prime }\right) \leq \tau \left( G\right) =\tau $.
 Therefore, the strong connectivity of $G$ implies that $\tau \left(
G^{\prime }\right) =\tau $.  Suppose $G^{\prime }$ is not strongly
connected. Then, there exists a nontrivial $\Delta $-group $G^{\prime
\prime }$ such that $\tau \left( G^{\prime \prime }\right) $ is less than $%
\tau $, and a surjective $\Delta $-homomorphism $\varphi ^{\prime
}:G^{\prime }\longrightarrow G^{\prime \prime }$.  Then, $\varphi ^{\prime
}\circ \varphi $ is a surjective $\Delta $-homomorphism from $G$ onto $%
G^{\prime \prime }$, contradicting the strong connectivity of $G$. }
\end{remarks}
The following lemma is useful in showing a group is strongly connected.
\begin{lem} \label{stronglem} Let $G$ be a $\Delta$-group and let $H$ and $N$ be strongly connected subgroups, with $N\lhd G$ and $\tau(H) = \tau(N)$. Then, the $\Delta$-subgroup $HN$ of $G$ is strongly connected and $\tau(HN) = \tau(H) = \tau(N)$. Furthermore, $a_\tau(HN) = a_\tau(H) + a_\tau(N) -a_\tau(H\cap N)$. \end{lem}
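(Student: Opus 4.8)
The plan is to write $\tau = \tau(H) = \tau(N)$ and proceed in three stages: first determine $\tau(HN)$, then deduce strong connectedness from Remark~\ref{remarksolid}.3, and finally compute $a_\tau(HN)$ using equation~(\ref{eqn2}) together with the second isomorphism theorem. Throughout I would use that, since $N \lhd G$, the product $HN$ is a $\Delta$-subgroup of $G$ and the isomorphism theorems for $\Delta$-groups (\cite{kolchin_groups}, Chapter IV, as already invoked in Remark~\ref{remarksolid}.3) supply a $\Delta$-isomorphism $HN/N \cong H/(H\cap N)$; note also $H\cap N \lhd H$.

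First I would pin down the type. Applying equation~(\ref{eqn1}) to $H$ with its normal subgroup $H\cap N$ gives $\tau(H/(H\cap N)) \leq \tau(H) = \tau$, so by the isomorphism $\tau(HN/N) \leq \tau$. Then equation~(\ref{eqn1}) applied to $N \lhd HN$ yields $\tau(HN) = \max(\tau(N), \tau(HN/N)) = \tau$, which is the middle assertion.

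Next, for strong connectedness, I would invoke Remark~\ref{remarksolid}.3 with the ambient group $HN$: since $HN$ has type $\tau$ and both $H$ and $N$ are strongly connected $\Delta$-subgroups of $HN$ of type $\tau$, each is contained in the strong identity component $(HN)_0$. As $(HN)_0$ is a $\Delta$-subgroup containing $H \cup N$, it contains the subgroup they generate, which (because $N$ is normal) is exactly $HN$; hence $(HN)_0 = HN$ and $HN$ is strongly connected.

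Finally, for the typical transcendence degree I would apply equation~(\ref{eqn2}) --- legitimate since all the relevant types equal $\tau$ --- to $N \lhd HN$, giving $a_\tau(HN) = a_\tau(N) + a_\tau(HN/N)$, and to $H\cap N \lhd H$, giving $a_\tau(H) = a_\tau(H\cap N) + a_\tau(H/(H\cap N))$. Since the differential dimension polynomial is invariant under $\Delta$-isomorphism, the isomorphism $HN/N \cong H/(H\cap N)$ gives $a_\tau(HN/N) = a_\tau(H/(H\cap N))$, and substituting yields $a_\tau(HN) = a_\tau(H) + a_\tau(N) - a_\tau(H\cap N)$. The one point demanding care --- and the only real subtlety I anticipate --- is the degenerate convention in equation~(\ref{eqn2}) that sets $a_\tau(\cdot) = 0$ when a type drops below $\tau$: one must check it is applied consistently to $H\cap N$, $HN/N$, and $H/(H\cap N)$. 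This causes no difficulty, however, because the convention assigns the same value to both sides of the isomorphism $HN/N \cong H/(H\cap N)$, so the additive identities line up regardless of whether $\tau(H\cap N)$ equals $\tau$ or is strictly smaller.
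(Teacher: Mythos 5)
Your proof is correct, and for the first two assertions it follows the paper's own route: the paper likewise computes $\tau(HN)=\max\{\tau(HN/N),\tau(N)\}=\max\{\tau(H/H\cap N),\tau(N)\}$ via equation~(\ref{eqn1}) and the second isomorphism theorem (using strong connectedness of $H$ to get $\tau(H/H\cap N)=\tau(H)$ exactly, where you only need the inequality $\tau(H/H\cap N)\leq\tau$ --- a harmless simplification), and then deduces strong connectedness of $HN$ from Remark~\ref{remarksolid}.3 exactly as you do. The genuine difference is the last assertion: the paper does not prove the formula $a_\tau(HN)=a_\tau(H)+a_\tau(N)-a_\tau(H\cap N)$ at all, but cites it to Sit (\cite{sit74}, Cor.~4.3) and Kolchin (\cite{kolchin_groups}, Cor.~4, p.~110), whereas you derive it internally by applying equation~(\ref{eqn2}) to $N\lhd HN$ and to $H\cap N\lhd H$ and transporting $a_\tau$ across the isomorphism $HN/N\cong H/(H\cap N)$. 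Your derivation is sound, including your care with the degenerate convention when $\tau(H\cap N)<\tau$: since the two quotients are $\Delta$-isomorphic, they have the same type, so the convention zeroes out both sides of the identification simultaneously. The only imprecision is your phrase that ``the differential dimension polynomial is invariant under $\Delta$-isomorphism'': the full polynomial is \emph{not} an isomorphism invariant (it depends on the choice of generators of the extension), but its degree and leading coefficient --- i.e., the pair $(\tau,a_\tau)$ --- are, and that is all your argument actually uses. What your route buys is self-containedness from equations~(\ref{eqn1}), (\ref{eqn2}) and the isomorphism theorems; what the paper's citation buys is brevity and an appeal to a result proved in full generality in the literature.
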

\begin{proof} Without loss of generality, we may assume that  $H \not\subset N$. We have $\tau(HN) = \max \{\tau(HN/N),\tau( N)\} = \max\{\tau(H/H\cap N),\tau(N)\}$. Since $H$ is strongly connected and $H\cap N \neq H$, we have $\tau(H/H\cap N) = \tau(H)$. Therefore $\tau(HN) = \tau(H) = \tau(N)$. Remark~\ref{remarksolid}.3 implies that $H$ and $N$ are contained in $(HN)_0$. This implies that $HN \subset (HN)_0$ so $HN$ is strongly connected. The final statement concerning the typical dimension (as well as the statement concerning differential type) is contained in (\cite{sit74} Cor.~4.3, p.~485; see also \cite{kolchin_groups}, Cor.~4, p.~109). \end{proof}  
\begin{exs}\label{example1} {\rm  Let $\UX$ be an  ordinary universal differential field with derivation $\dd$. \\[0.1in]
1.  Let $G$ be the additive group $\Ga(\UX)$.  $G$ has type $1$ and typical differential dimension $1$.  Cassidy \cite{cassidy1} showed that any proper $\Delta$-subgroup $H$ is of the form $H = \{a \in k \ | \ L(a) = 0\}$ for some linear differential operator $L$. The type of such a group is $0$ and its typical differential dimension is $d$ where $d $ is the order of $L$.  Therefore the type of $G/H$ for any proper $H$ is equal to the type of $G$ and so $G$ is strongly connected.\\[0.1in]
2. Let $G_1 = G$ as above and $G_2 = \Ga(C)$ where $C = \{c \in k \ | \dd(c)=0\}$.  The type of $G_2$ is $0$ and the typical differential dimension  is $1$.  Using equations (\ref{eqn1}) and (\ref{eqn2}), we see that $\tau(G_1\times G_2) = \max(\tau({G_1}),\tau({G_2})) = 1$ and $a_\tau(G_1\times G_2) = a_\tau(G_1) = 1$. The group $G_1$ is minimal with respect to having the same type and typical differential dimension as $G_1\times G_2$ and so is the strong identity component of this group.  Note that $G_1\times G_2$ is connected.
}
\end{exs}
To state the analogue of the Jordan-H\"older Theorem, we need two  more definitions: the first is the appropriate notion of ``simple'' and the second is the relevant notion of ``equivalent''. The next two subsections deal with these notions.
\subsubsection{Almost Simple Groups}
\begin{defin} An infinite $\Delta$-group $G$ is  {\rm almost simple} if for any normal proper $\Delta$-subgroup $H$ of $G$ we have $\tau(H) < \tau(G)$. A $\Delta$-$k$-group is {\rm almost $k$-simple} if for any normal $\Delta$-$k$-subgroup $H$ of $G$ we have $\tau (H) < \tau (G)$.
\end{defin}
If $G$ is almost simple, then $G$ is almost $k$-simple. Note that an almost simple group is strongly connected. We shall use the term {\it simple}  {\ to denote a group (resp.,  algebraic group, $\Delta$-group) whose only proper normal subgroup (resp., normal algebraic subgroup, normal $\Delta$-subgroup) is the trivial group, and the term {\it quasisimple} to denote a group (resp., algebraic group, $\Delta$-group) $G$ such that $G/Z(G)$ is simple and $Z(G)$ is finite, where $Z(G)$ is the center of $G$. If $G$ is an algebraic $k$-group (resp.  $\Delta$-$k$-group), we will call $G$ $k$-simple ($k$-quasismple) if every proper normal algebraic $k$-subgroup (resp. $\Delta$-$k$-subgroup) is trivial (resp. finite).
\begin{exs}\label{example29}{\rm Let $\UX$ be as in Example~\ref{example1} and $C_\UX$ its field of constants.\\[0.1in]
1. The group $\Ga(C_\UX)$ has type $0$ with only the trivial group as a proper subgroup so it is  simple (and so almost simple).\\[0.1in]
2. The group $\Gm(C_\UX)$ has type $0$ with only finite subgroups as proper algebraic (or $\delta$-) subgroups so it is quasisimple.\\[0.1in]
3. The group $\Ga(\UX)$ has type $1$ and any proper subgroup has type $0$ or $-1$ (if it is $\{0\}$). The proper $\delta $-subgroups of $G=\mathbb{G}_{\text{a }}\left( \mathbb{U}%
\right) $ are the kernels of nonzero linear differential operators $L\in $ $%
\mathbb{U}\left[ \delta \right] $. \ It follows, as E. Cartan noted in \cite{cartan09},
that every nontrivial $\delta $-homomorphic image of $G$ is isomorphic to $G
$. Therefore $\Ga(\UX)$ is almost simple.\\[0.1in]
4. If $G$ is a quasisimple linear algebraic group defined over $C_\UX$, then, as we shall show in Section~\ref{section_asg} that $G(\UX)$ and $G(C_\UX)$ are quasisimple $\delta$-groups. \\[0.1in]
In Section~\ref{section_asg}, we shall show that for ordinary differential fields, these are  essentially the only almost simple {\it linear} $\delta$-groups.}
\end{exs}
\begin{ex}\label{evolex}{ \rm Let $\Delta = \{\dd_t,\dd_x\}$,$\UX$ be a universal $\Delta$-field and $z$ a differential indeterminate. Let $G \subset \Ga(\UX)$ be the additive group of solutions of $\dd_ty = f(y), \ f\in \UX[\dd_x]$.  Following Proposition 2.45 of \cite{suer07}, we show that $G$ is almost simple.  The type of $G$ is $1$ (see also Example~\ref{example213}). If $H \subset G$ is a proper $\Delta$-subgroup of $G$, then there would exist a linear operator $g \in \UX[\Delta]$ such that $g(z)$ is not in the differential ideal $[\dd_tz-f(z)]$ generated by $\dd_tz-f(z)$ in $\UX\{z\}$ and such that $H \subset \{y \in G \ | \ g(y) = 0\}$.  We may assume that $g \in \UX[\dd_x]$. If  $g$ has order $d$, then for any $y \in H$, we have $k(y, \dd_xy, \dd^2_xy, \ldots ) = k(y, \dd_xy, \ldots , \dd_x^{d-1}y)$. Since $\dd_t y\in k(y, \dd_xy, \ldots , \dd_x^{d-1}y)$, this  field equals $k<y>$ which must therefore be of finite transcendence degree over $k$. Therefore $H$ has type $0$ and  $G$ is almost simple.} 
\end{ex}

%
%\begin{ex}\label{evolex}{ \rm Let $\Delta = \{\dd_1, \dd_2\}$ and $\frakU$ be a universal $\Delta$-field. Let $G \subset \Ga(\frakU)$ be the additive group of solutions of $\dd_1 y = f(y)$ where $f \in \frakU[\dd_2]$, the ring of linear operators in the derivation $\dd_2$. Following  Proposition 2.45 of \cite{suer07}, we show that $G$ is almost simple.  If $H \subset G$ is a  proper $\Delta$-subgroup of $G$, then there would exist a linear operator $g \in \UX[\delta]$ such that $L =g(z) \notin [\dd_1z-f(z)]$ vanishing on $H$. Since $\dd_1v = f(v)$ for all $v \in H$, we can assume that this operator is a polynomial in $\dd_2$ alone. If  $g$ has order $d$, then for any $v \in H$, we have $k(v, \dd_xv, \dd^2_xv, \ldots ) = k(v, \dd_xv, \ldots , \dd_x^{d-1}v)$. Since $\dd_t v\in k(v, \dd_xv, \ldots , \dd_x^{d-1}v)$, this  field equals $k<v>$ which must therefore be of finite transcendence degree over $k$. Therefore $H$ has type $0$ and  $G$ is almost simple. } 
%\end{ex}
%
It is not hard to see that a finite normal subgroup of a connected algebraic group must be central.  The next proposition is an analogous result for strongly connected $\Delta$-groups.
\begin{prop}\label{solidprop} Let $G$ be a strongly connected $\Delta$-group. Then every normal $\Delta$-subgroup of smaller type is central.
\end{prop}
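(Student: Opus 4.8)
The plan is to imitate the classical argument that a finite normal subgroup of a connected algebraic group is central, replacing ``finite'' by ``of smaller type'' and ``connected'' by ``strongly connected.'' The engine of the proof will be the conjugation action of $G$ on $N$, together with the defining property of $G_0$ as the smallest $\Delta$-$\frakU$-subgroup $H$ with $\tau(G/H) < \tau(G)$.

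First I would fix an arbitrary point $n \in N$ and consider the conjugation morphism $c_n : G \rightarrow G$, $g \mapsto g n g^{-1}$. Since $N$ is normal, $c_n$ carries $G$ into $N$, so its image is the orbit $\calO_n = \{\, g n g^{-1} : g \in G\,\} \subseteq N$. By orbit--stabilizer for $\Delta$-groups (\cite{kolchin_groups}, Ch.~IV), $c_n$ factors through a bijective $\Delta$-morphism $G/C_G(n) \rightarrow \calO_n$, where $C_G(n) = \{\, g \in G : g n g^{-1} = n \,\}$ is the centralizer of $n$, a $\Delta$-$\frakU$-subgroup of $G$.

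Next I would bound the type of $G/C_G(n)$. Because $G$ is strongly connected it is connected, hence irreducible, so both $G/C_G(n)$ and the Kolchin closure $V = \overline{\calO_n}$ are irreducible, and the induced map $G/C_G(n) \rightarrow V$ is injective with dense image. Corollary~\ref{taucor} then yields $\tau(G/C_G(n)) \leq \tau(V)$. Since $N$ is Kolchin closed and $\calO_n \subseteq N$, we have $V \subseteq N$, whence $\tau(V) \leq \tau(N)$ directly from the definition of the type of a $\Delta$-set as the maximum of the types of its points. Combining these gives $\tau(G/C_G(n)) \leq \tau(N) < \tau(G)$. Consequently $C_G(n) \in \calS$, so $G_0 \subseteq C_G(n)$; strong connectedness forces $G_0 = G$, hence $C_G(n) = G$. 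Thus $n \in Z(G)$, and as $n \in N$ was arbitrary, $N \subseteq Z(G)$.

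The one point I expect to require care is the identification $\calO_n \cong G/C_G(n)$ and, more precisely, the verification that the homogeneous space structure on $G/C_G(n)$ and the orbit map are genuine $\Delta$-morphisms between irreducible $\Delta$-sets, so that Corollary~\ref{taucor} legitimately applies; once this is in place the argument is just a short chain of inequalities together with the minimality of $G_0$. Everything else — normality giving $c_n(G) \subseteq N$, closedness of $N$, and monotonicity of $\tau$ under inclusion — is routine.
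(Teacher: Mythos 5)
Your proof is correct and follows essentially the same route as the paper's: fix $a \in N$, form the conjugation map $\alpha_a : G \to N$, factor it through an injective $\Delta$-map on $G/Z_G(a)$, apply Corollary~\ref{taucor} to get $\tau(G/Z_G(a)) \leq \tau(N) < \tau(G)$, and invoke strong connectedness to force $Z_G(a) = G$. If anything, your detour through the Kolchin closure $V = \overline{\calO_n}$ treats the irreducibility hypothesis of Corollary~\ref{taucor} a bit more carefully than the paper, which applies the corollary with target $N$ directly.
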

We immediately have the following corollaries.
\begin{cor}\label{simplecor} Let $G$ be an almost simple $\Delta$-group. Any  proper normal $\Delta$-subgroup is central.
\end{cor}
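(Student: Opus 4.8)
The plan is to adapt the classical argument showing that a finite normal subgroup of a connected algebraic group is central, replacing ``finite'' by ``of smaller type'' and ``connected'' by ``strongly connected.'' Let $N$ be a normal $\Delta$-subgroup of $G$ with $\tau(N) < \tau(G)$, and fix a point $n$ of $N$. I would study the conjugation action of $G$ on $n$: let $C_G(n) = \{g \in G \mid gng^{-1} = n\}$ be its centralizer, which is a $\Delta$-$\frakU$-subgroup of $G$, and let $\mathrm{Cl}(n) = \{gng^{-1} \mid g \in G\}$ be its conjugacy class. Because $N$ is normal, $\mathrm{Cl}(n) \subseteq N$, so in particular $\tau(\mathrm{Cl}(n)) \leq \tau(N) < \tau(G)$. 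The goal is to force $C_G(n) = G$, i.e. $n \in Z(G)$.

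The central computation is an orbit--stabilizer estimate for the type. The orbit map $g \mapsto gng^{-1}$ is a $\Delta$-morphism from the irreducible $\Delta$-set $G$ onto $\mathrm{Cl}(n)$, which therefore is irreducible, and it induces an injective $\Delta$-morphism from the coset space $G/C_G(n)$ into (the closure of) $\mathrm{Cl}(n)$. Since $G/C_G(n)$ is irreducible, Corollary~\ref{taucor} gives
\[
\tau(G/C_G(n)) \ \leq \ \tau(\mathrm{Cl}(n)) \ \leq \ \tau(N) \ < \ \tau(G).
\]
Thus $\tau(G/C_G(n)) < \tau(G)$, which by the characterization of $\calS$ recalled before the definition of the strong identity component means precisely that $C_G(n) \in \calS$.

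Now I invoke strong connectedness. By hypothesis $G_0 = G$, and $G_0$ is the smallest element of $\calS$, so every member of $\calS$ contains $G_0 = G$ and hence equals $G$; in other words $\calS = \{G\}$. Applying this to $C_G(n) \in \calS$ yields $C_G(n) = G$, that is, $n$ commutes with every element of $G$, so $n \in Z(G)$. As $n$ was an arbitrary point of $N$ and $\frakU$ is universal, this shows $N \subseteq Z(G)$, proving the proposition; Corollary~\ref{simplecor} is then immediate, since an almost simple group is strongly connected and every proper normal $\Delta$-subgroup has strictly smaller type.

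The step I expect to be the main obstacle is the orbit--stabilizer input: one must know, in Kolchin's category of $\Delta$-sets, that the conjugacy class $\mathrm{Cl}(n)$ is irreducible (as the image of the irreducible $G$) and that the orbit map factors through an injective $\Delta$-morphism out of the homogeneous space $G/C_G(n)$, so that Corollary~\ref{taucor} applies. This relies on Kolchin's construction of coset spaces as $\Delta$-homogeneous spaces (\cite{kolchin_groups}, Ch. IV.4) together with equation~(\ref{eqn1}); once this identification is in place, the type bound and the appeal to $\calS = \{G\}$ are routine.
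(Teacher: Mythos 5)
Your proof is correct and follows essentially the same route as the paper: you reprove Proposition~\ref{solidprop} exactly as the authors do, by factoring the conjugation orbit map $g \mapsto gng^{-1}$ through an injective $\Delta$-map on the coset space $G/Z_G(n)$ (via Kolchin's quotient theorem), applying Corollary~\ref{taucor} to get $\tau(G/Z_G(n)) < \tau(G)$, and invoking strong connectedness to force $Z_G(n) = G$. Your deduction of the corollary --- that almost simplicity makes $G$ equal its strong identity component, so every proper normal $\Delta$-subgroup has smaller type and the proposition applies --- is also the paper's argument verbatim.
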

\begin{proof} For any $\Delta$-group $G$, the strong identity component, $G_0$, is a normal $\Delta$-subgroup of the same type, and  so $G = G_0$, that is, $G$ is strongly connected. By definition  any proper normal $\Delta$-subgroup is of smaller type.
\end{proof}
If $G$ is an almost $k$-simple $\Delta$-$k$-group, the strong identity component is a normal $\Delta$-$k$-subgroup of the same type, and so, $G=G_0$.  By definition, any proper normal $\Delta$-$k$-subgroup has smaller type, and hence is central.

\begin{cor}\label{simplecor2} Let $G$ be an almost simple $\Delta$-group. Then $G/Z(G)$ is a simple $\Delta$-group.
\end{cor}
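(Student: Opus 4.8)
The plan is to deduce the result directly from Corollary~\ref{simplecor} together with the correspondence theorem for normal $\Delta$-subgroups under a quotient map. First I would note that the center $Z(G)$ is a normal $\Delta$-subgroup of $G$, so that $G/Z(G)$ is a well-defined $\Delta$-group: the center is cut out by the (differential-polynomial) conditions expressing commutation with every element of $G$, and $G/Z(G)$ inherits a $\Delta$-group structure from Kolchin's quotient construction already used above.

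Next I would invoke the lattice (correspondence) isomorphism theorem, which is available for $\Delta$-groups by the isomorphism theorems recorded in Remark~\ref{remarksolid}.3: the projection $\pi : G \to G/Z(G)$ induces an inclusion-preserving bijection between normal $\Delta$-subgroups $\bar H \lhd G/Z(G)$ and normal $\Delta$-subgroups $H \lhd G$ with $Z(G) \subseteq H \subseteq G$, given by $\bar H = H/Z(G)$.

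Now let $\bar H$ be any proper normal $\Delta$-subgroup of $G/Z(G)$, and let $H$ be the corresponding normal $\Delta$-subgroup of $G$, so that $Z(G) \subseteq H \subsetneq G$. Since $G$ is almost simple, Corollary~\ref{simplecor} forces the proper normal $\Delta$-subgroup $H$ to be central, i.e. $H \subseteq Z(G)$. Combined with $Z(G) \subseteq H$ this yields $H = Z(G)$, whence $\bar H = H/Z(G)$ is trivial. Thus the only normal $\Delta$-subgroups of $G/Z(G)$ are the trivial group and the whole group, which is precisely the assertion that $G/Z(G)$ is simple.

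The argument is essentially immediate once Corollary~\ref{simplecor} is in hand, so I expect no serious obstacle along the main line; the genuine work has already been done in Proposition~\ref{solidprop}. The one point demanding care is the correspondence step: one must check that it is the \emph{normal $\Delta$-subgroups} (not merely abstract normal subgroups) that correspond, i.e. that the $\pi$-preimage of a $\Delta$-subgroup is again a $\Delta$-subgroup and that $\pi$ is a $\Delta$-morphism, which is exactly what the $\Delta$-version of the isomorphism theorems supplies. A secondary, purely bookkeeping, subtlety is the degenerate abelian case (for instance $\Ga(k)$), where $Z(G) = G$ and $G/Z(G)$ is trivial; under the stated definition of simple this instance carries no content and may simply be noted in passing.
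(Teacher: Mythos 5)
Your proof is correct and follows exactly the route the paper intends: the paper states this as an immediate consequence of Corollary~\ref{simplecor} (it gives no separate argument), and your use of the correspondence theorem for normal $\Delta$-subgroups under $\pi: G \to G/Z(G)$, together with the observation that any proper normal $\Delta$-subgroup containing $Z(G)$ must equal $Z(G)$, is precisely the intended derivation. Your side remarks (that $Z(G)$ is indeed a $\Delta$-subgroup, and the vacuous commutative case) are accurate and harmless.
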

%
%We shall give two proofs of Proposition~\ref{solidprop}. The first  depends heavily on properties of  Kolchin's notion of homogeneous differential algebraic spaces while the second depends on the notion of the differential lie algebra of a differential group.\\[0.2in]
%
\noindent {\bf Proof  of Proposition \ref{solidprop}:} Let $N$ be a normal $\Delta$-subgroup of $G$.  We define a $\Delta$--rational map 
\[\alpha: G\times N \rightarrow N\]
by the formula 
\[\alpha(g,a) = gag^{-1}, \ \ g \in G, a \in N.\]
One checks that $\alpha(gh,a) = \alpha(g,\alpha(h,a))$ and $\alpha(1,a) = a$ and so $\alpha$ defines an action of the $\Delta$-group $G$ on $N$. For fixed $a \in N$, the map $\alpha_a(g)= gag^{-1}$ is a $\Delta$-rational map from $G$ to $N$ that is constant on the left cosets of $Z_G(a) = \{g \in G \ | \ gag^{-1} = a\}$.  Note that $Z_G(a)$ is a $\Delta$-subgroup of $G$ (\cite{kolchin_groups}, Section IV.4, Cor.~2(b)).\\[0.1in]
 Theorem 3, p.105 of \cite{kolchin_groups} implies that there is a $\Delta$-morphism $\beta:G/Z_G(a) \rightarrow N$ such that $\pi \circ \beta = \alpha_a$, where $\pi$ is the canonical quotient map. The image of $G/Z_G(a)$ under $\beta$ is the same as the image of $G$ under $\alpha_a$, namely  $Ga$. Let $V$ be the $\Delta$-closure of $Ga$. \\[0.1in]
Note that $\alpha_a(g) = \alpha_a(h)$ implies that $gag^{-1} = hah^{-1}$ so $h^{-1}g \in Z_G(z)$. Therefore $\beta$ is an injective $\Delta$-map. Corollary~\ref{taucor} implies that $\tau(G/Z_G(a)) \leq \tau(N) < \tau(G)$. Since $G$ is strongly connected, we have $Z_G(a) = G$.\hfill \QED

\subsubsection{Isogeny}
\begin{defin} 1) Let $G$, $H$ be strongly connected $\Delta$-$k$-groups. A $\Delta$-$k$-morphism $\phi:G\rightarrow H$ is an {\rm isogeny} if it is surjective and $\tau(\ker \phi) < \tau(G)$.\\[0.1in]
2) Two strongly connected $\Delta$-$k$-groups  $H_1,H_2 $  are {\rm $\Delta$-$k$-isogenous} if there exists a strongly connected $\Delta$-$k$-group $G $ and isogenies $\phi_i:G\longrightarrow H_i$  for $i = 1, 2$.
\end{defin}
The notion of  isogenous in the present context generalizes the notion of isogenous in the theory of algebraic groups (cf., \cite{rosenlicht_56}) where two connected algebraic groups $H_1, H_2$ are isogenous if there is an algebraic group $G$ and morphisms $\phi_i:G\rightarrow H_i$ for $i = 1,2$ with finite kernel.
\begin{ex} {\rm Let $G = \SL_6(\CX)$. The center of $G$ is isomorphic to the cyclic group of order $6$.  Let $C_2$ and $C_3$ be cyclic subgroups of this center of orders $2$ and $3$ respectively.  Let $H_1 = \SL_6(\CX)/C_2$ and $H_2= \SL_6(\CX)/C_3$ and let $\phi_i:G \longrightarrow H_i$ be the obvious projections.  These show $H_1$ and $H_2$ are isogenous in both the category of algebraic groups and the category of $\Delta$-groups.  Note that they are not isomorphic since their centers are finite groups of different orders.} \end{ex}

\begin{remarks}\label{remarkisog}{\rm 1.  If $G_{1}$ and $G_{2}$ are strongly connected $\Delta$-groups and there is an
isogeny $\phi :G_{1}\longrightarrow G_{2}$, then $G_{1}$ and $G_{2}$ are
isogenous.  However, the converse is false, as the above example shows.\\[0.1in]
2. If $\phi :G_{1}\longrightarrow G_{2}$ is an isogeny then equations (\ref{eqn1}) and (\ref{eqn2}) imply that  $\tau(G_1) = \tau(G_2)$ and $a_\tau(G_1) = a_\tau(G_2)$.\\[0.1in]
3. The composite of  isogenies of strongly connected $\Delta$-groups  is
an isogeny.\\[0.1in]
4. If $G_{1}$ and $G_{2}$ are strongly connected of the same type, then it is easy to
see that $G_{1}\times G_{2}$ is strongly connected. Suppose $G_{1}^{\prime }$ and $
G_{2}^{\prime }$ are strongly connected and $\phi _{i}:G_{i}\longrightarrow
G_{i}^{\prime }$ is an isogeny, $i=1,2$. Then, $\phi _{1}\times \phi
_{2}:G_{1}\times G_{2}\longrightarrow G_{1}^{\prime }\times G_{2}^{\prime }$, with kernel $\ker \phi _{1}\times \ker \phi _{2}$ is an isogeny. }
\end{remarks}
\begin{prop} Let $G_{1}$ and $G_{2}$ be strongly connected  $\Delta$-$k$-groups. The following are equivalent:

\begin{enumerate}
\item There exists a strongly connected $\Delta$-$k$-group $H$ and isogenies $\phi
_{1}:H\longrightarrow G_{1}$ and $\phi _{2}:H\longrightarrow G_{2}:$%
\begin{equation*}
\begin{array}{ccccc}
&  & H &  &  \\ 
& ^{\phi _{1}\swarrow } &  & ^{\searrow \phi _{2}} &  \\ 
G_{1} &  &  &  & G_{2}%
\end{array}%
.
\end{equation*}

\item There exists a strongly connected $\Delta$-$k$-group $K$ and isogenies $\psi
_{1}:G_{1}\longrightarrow K$ and $\psi _{2}:G_{2}\longrightarrow K$:
\begin{equation*}
\begin{array}{ccccc}
G_{1} &  &  &  & G_{2} \\ 
& ^{\psi _{1}\searrow } &  & ^{\swarrow \psi _{2}} &  \\ 
&  & K &  & 
\end{array}%
.
\end{equation*}
\end{enumerate}
\end{prop}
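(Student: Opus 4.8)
The plan is to read the statement as the equivalence between a ``span'' characterization of isogeny (1) and a ``cospan'' characterization (2), and to prove each implication by the evident universal construction: a quotient (pushout) for $(1)\Rightarrow(2)$ and a fibre product (pullback) for $(2)\Rightarrow(1)$. The standing facts I would invoke are that an isogeny preserves the differential type and typical transcendence degree (Remarks~\ref{remarkisog}, item 2), that the image of a strongly connected group under a homomorphism is strongly connected (Remarks~\ref{remarksolid}, item 5), equations (\ref{eqn1}) and (\ref{eqn2}), and the defining minimality of the strong identity component together with Remarks~\ref{remarksolid}, item 3. I note that $G_1$ and $G_2$ are strongly connected here: in (1) this is automatic since $G_i=\phi_i(H)$ is the image of the strongly connected group $H$, and in (2) it is exactly the input the fibre-product step requires.

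For $(1)\Rightarrow(2)$ I would set $N_i=\ker\phi_i\lhd H$, so $\tau(N_i)<\tau(H)$. Since $N_1,N_2$ are normal $\Delta$-subgroups, $N:=N_1N_2$ is again a normal $\Delta$-subgroup (the isomorphism theorems for $\Delta$-groups, used already in Remarks~\ref{remarksolid}, item 3), and from (\ref{eqn1}) together with $N_1N_2/N_2\cong N_1/(N_1\cap N_2)$ one gets $\tau(N)<\tau(H)$. Put $K=H/N$ with quotient map $q\colon H\to K$; then $\tau(K)=\tau(H)$ and $K$ is strongly connected by Remarks~\ref{remarksolid}, item 5. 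Because $N_i\subseteq N$, the map $q$ factors uniquely as $\psi_i\circ\phi_i=q$ for a surjection $\psi_i\colon G_i\to K$, whose kernel $\phi_i(N)\cong N/N_i$ has type $\le\tau(N)<\tau(G_i)$; hence each $\psi_i$ is an isogeny, which is (2).

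For $(2)\Rightarrow(1)$ I would form the fibre product $P=G_1\times_K G_2=\{(g_1,g_2):\psi_1(g_1)=\psi_2(g_2)\}$, realized as the kernel of the $\Delta$-$k$-homomorphism $G_1\times G_2\to K$, $(g_1,g_2)\mapsto\psi_1(g_1)\psi_2(g_2)^{-1}$, so that $P$ is a $\Delta$-$k$-group. The coordinate projections $\phi_i\colon P\to G_i$ are surjective (using surjectivity of $\psi_2$, resp.\ $\psi_1$), and $\ker\phi_1\cong\ker\psi_2$, $\ker\phi_2\cong\ker\psi_1$. Since isogenies preserve type, $\tau(\ker\psi_2)<\tau(G_2)=\tau(K)=\tau(G_1)$, so (\ref{eqn1}) gives $\tau(P)=\tau(G_1)=\tau(G_2)$ and shows each $\phi_i$ is an isogeny. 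I would then replace $P$ by its strong identity component $H=P_0$, which is strongly connected with $\tau(P/P_0)<\tau(P)$.

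The remaining and main point is to check that the projections stay surjective after this replacement. Here $\phi_i(P_0)$ is strongly connected (Remarks~\ref{remarksolid}, item 5), is normal in $G_i$, and has type $\tau(P_0)=\tau(G_i)$; moreover $G_i/\phi_i(P_0)$ is a homomorphic image of $P/P_0$ and hence has type $<\tau(G_i)$. Since $G_i$ is strongly connected, the defining minimality of its strong identity component forces $\phi_i(P_0)=G_i$. Thus $\phi_i|_{P_0}\colon P_0\to G_i$ is a surjection with kernel of type $<\tau(P_0)$, i.e.\ an isogeny, and $H=P_0$ witnesses (1). I expect this last step -- passing to $P_0$ and recovering surjectivity -- to be the true obstacle, since it is precisely where the refinement ``strongly connected'' (rather than merely connected) and the hypothesis on $G_1,G_2$ enter; the pushout direction is routine once $\tau(N_1N_2)<\tau(H)$ is in hand.
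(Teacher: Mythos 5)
Your proposal is correct (under the reading discussed below) and, in outline, it is the paper's own proof: for $(1)\Rightarrow(2)$ the paper likewise sets $K=H/(\ker\phi_1\ker\phi_2)$ and checks that the induced maps $G_i\to K$ are surjective with kernels $\phi_i(\ker\phi_1\ker\phi_2)$ of smaller type, which is exactly your $N=N_1N_2$ argument; for $(2)\Rightarrow(1)$ the paper likewise forms the fibre product and shows the two projections are surjective isogenies. The genuine difference is your final step, and here you have improved on the paper: the published proof stops at the fibre product and never verifies that it is strongly connected, even though statement (1) demands a strongly connected group, and the fibre product need not be strongly connected. For instance, in the ordinary case take $G_1=G_2=K=\Ga(\frakU)$ and $\psi_1=\psi_2=\dd$; the fibre product is $\{(g_1,g_2)\ |\ \dd(g_1-g_2)=0\}\simeq\Ga(\frakU)\times\Ga(C)$, which is not strongly connected by Example~\ref{example1}.2. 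So your passage to $P_0$, with surjectivity recovered from the minimality property of the strong identity component, is not an optional refinement but fills an actual gap in the paper's argument. (The paper does perform essentially this maneuver --- pass to a strong identity component and recover surjectivity from strong connectedness of the target, there via an $a_\tau$ count --- but only in the proof of the following result, Proposition~\ref{rosprop}, when establishing transitivity.)

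The one caveat is the hypothesis you yourself flag: recovering surjectivity of $\phi_i|_{P_0}$ needs $G_1,G_2$ strongly connected, which is not among the stated hypotheses, and it does not follow from statement (2). Indeed, take $G_1=\Ga(\frakU)\times\Ga(C)$, $G_2=K=\Ga(\frakU)$, $\psi_1$ the first projection and $\psi_2$ the identity: then (2) holds, but (1) fails, since any isogeny from a strongly connected group onto $G_1$ would force $G_1$ itself to be strongly connected by Remark~\ref{remarksolid}.5. Thus the equivalence is literally false without assuming the $G_i$ strongly connected; with that assumption (clearly the intended one, given the surrounding results on strongly connected groups), your proof is complete, and in the direction $(2)\Rightarrow(1)$ it is more careful than the paper's own.
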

\begin{proof} Note that all the $\Delta$-groups and $\Delta$-homomorphisms are defined over $k$. Assume the first statement. Put $H_{1}=\phi _{1}(\ker \phi _{2})$,
and $H_{2}=\phi _{2}(\ker \phi _{1})$. Clearly, $H_{1}=\phi
_{1}(\ker \phi _{1}\ker \phi _{2})$, and $H_{2}=\phi _{2}(\ker
\phi _{1}\ker \phi _{2})$. Therefore,%
\begin{eqnarray*}
G_{1}/H_{1} &=&\phi _{1}\left( H\right) /\phi _{1}(\ker \phi
_{2})=\phi _{1}\left( H\right) /\phi _{1}(\ker \phi _{1}\ker
\phi _{2})=H/(\ker \phi _{1}\ker \phi _{2}) \\
G_2/H_2 &=&\phi _{2}\left( H\right) /\phi _{2}(\ker \phi _{2})=\phi
_{2}\left( H\right) /\phi _{2}(\ker \phi _{1}\ker \phi
_{2})=H/(\ker \phi _{1}\ker \phi _{2}).
\end{eqnarray*}%
Set $K=H/(\ker \phi _{1}\ker \phi _{2})$. Since $K$ is the image of
the strongly connected group $H$, it is strongly connected. Since $\tau (\ker \phi _{1}\ker
\phi _{2})<\tau \left( H\right) ,$ the projections $\psi
_{i}:G_{i}\longrightarrow G_i/H_i \simeq K$ are isogenies.\\[0.1in]
Now assume the second statement.  Let $G = \{ (g_1,g_2) \in G_1\times G_2 \ | \ \psi_1(g_1) = \psi_2(g_2) \}$, that is, the pull-back or fiber product. One sees that the natural projections $\phi_1:G \longrightarrow G_1$ and  $\phi_2:G \longrightarrow G_2$ are surjective. This implies that $\tau(G) \geq \tau(G_1)$. The kernel of the $\phi_i$'s is contained in $\ker \psi_1 \times \ker \psi_2$ and so $\tau(\ker \phi_1) \leq \tau(\ker \psi_1 \times \ker \psi_2) < \tau(G_1)$. Since $G_1 \simeq G/\ker(\phi_1)$ we have $\tau(G)= \tau(G_1)$ and $\phi_1$ is an isogeny. Similar reasoning shows that $\phi_2$ is an isogeny. \end{proof}
The following result is a reworking of Theorem 6, \cite{rosenlicht_56} in the context of $\Delta$-$k$-groups.
\begin{prop}\label{rosprop} Isogeny is an
equivalence relation on the set of strongly connected $\Delta$-$k$-groups. If $G_{1}$ and $%
G_{2}$ are isogenous strongly connected $\Delta$-$k$-groups, with $\tau =\tau \left(
G_{1}\right) =\tau \left( G_{2}\right) $, then there is a bijective
correspondence between the strongly connected $\Delta$-$k$-subgroups of $G_{1}$ with type $%
\tau $ and those of $G_{2}$ with type $\tau $.  Let $J_1$ and $K_1$ be $\Delta$-$k$-subgroups of $G_1$ corresponding to the $\Delta$-$k$-subgroups $J_2$ and $K_2$ of $G_2$. Then, $J_{1}\subseteq K_{1}$ if and
only if $J_{2}\subseteq K_{2}$, and $J_{1}\trianglelefteq K_{1}$ if and only
if $J_{2}\trianglelefteq K_{2}$. In this case, the strongly connected $\Delta$-$k$-groups $K_{1}/J_{1}\mbox{ and }
K_{2}/J_{2}$ are isogenous and have type $\tau$. If $G_{1},G_{2}$, and $G_{1}^{\prime },G_{2}^{\prime }$ are
pairs of isogenous $\Delta$-$k$-groups, then the $\Delta$-$k$-groups $G_{1}\times G_{1}^{\prime }$ and $%
G_{2}\times G_{2}^{\prime }$ are isogenous and have type $\tau$.
\end{prop}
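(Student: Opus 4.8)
The plan is to prove each assertion of Proposition~\ref{rosprop} in turn, leaning on the characterization that isogeny is witnessed by a common strongly connected ``cover.'' \textbf{Reflexivity} is immediate (take $G$ itself with the identity maps), and \textbf{symmetry} is built into the definition. For \textbf{transitivity}, suppose $G_1,G_2$ are isogenous via a cover $H$ and $G_2,G_3$ via a cover $H'$. I would form the fiber product over $G_2$, namely $P=\{(h,h')\in H\times H'\ |\ \phi_2(h)=\phi_2'(h')\}$, and take $G=P_0$, its strong identity component. Using Remark~\ref{remarkisog} (composites of isogenies are isogenies) together with Equations~(\ref{eqn1}) and (\ref{eqn2}), the projections $P\to H\to G_1$ and $P\to H'\to G_3$ restrict to isogenies on $P_0$; the main check is that these restrictions remain surjective with kernel of smaller type, which follows because $\tau$ and $a_\tau$ are preserved under isogeny.

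For the \textbf{subgroup correspondence}, fix an isogeny-diagram $G_1\xleftarrow{\phi_1}H\xrightarrow{\phi_2}G_2$ with all groups strongly connected of type $\tau$. Given a strongly connected $\Delta$-subgroup $J_1\subseteq G_1$ of type $\tau$, I would define its correspondent as $J_2=\phi_2\big((\phi_1^{-1}(J_1))_0\big)$, i.e.\ pull back along $\phi_1$, take the strong identity component to restore strong connectedness, then push forward along $\phi_2$. The key point is that $\phi_1$ restricted to $(\phi_1^{-1}(J_1))_0$ is an isogeny onto $J_1$: surjectivity holds because $\ker\phi_1\subseteq\phi_1^{-1}(J_1)$ has type $<\tau$ so the strong identity component still dominates $J_1$ (by Equation~(\ref{eqn1}) the type is preserved), and the kernel is contained in $\ker\phi_1$, hence of smaller type. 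Symmetrically $\phi_2$ gives an isogeny onto $J_2$, so $J_1$ and $J_2$ are isogenous strongly connected groups of type $\tau$, and reversing the roles of $\phi_1,\phi_2$ shows the correspondence is a genuine bijection.

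For the \textbf{order-} and \textbf{normality-preservation}, I would argue that $J_1\subseteq K_1$ iff the pulled-back strong identity components are nested in $H$, which is symmetric in the two sides; the subtlety is that taking preimages and strong identity components could in principle destroy inclusions, so I would verify that $(\phi_1^{-1}(J_1))_0\subseteq(\phi_1^{-1}(K_1))_0$ precisely when $J_1\subseteq K_1$, using uniqueness and functoriality of the strong identity component (Remark~\ref{remarksolid}.3--4). For normality, $J_1\lhd K_1$ transfers to the corresponding subgroups of $H$ because $\phi_1$ is a surjective homomorphism with small kernel, and Remark~\ref{remarksolid}.2 ensures strong identity components of normal subgroups stay normal; pushing forward through the surjection $\phi_2$ then yields $J_2\lhd K_2$. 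The claim that $K_1/J_1$ and $K_2/J_2$ are isogenous follows by restricting the cover: the relevant subgroup $\widetilde K$ of $H$ covers both $K_1$ and $K_2$ isogenously, and passing to the quotient by $\widetilde J$ gives a strongly connected group (image of strongly connected, by Remark~\ref{remarksolid}.5) isogenous to each quotient, via the induced maps and Equations~(\ref{eqn1})--(\ref{eqn2}).

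Finally, the \textbf{product statement} is the easiest: it is exactly Remark~\ref{remarkisog}.4, so I would simply cite that $\phi_1\times\phi_1'$ and $\phi_2\times\phi_2'$ (from covers of each pair) assemble into isogenies $H\times H'\to G_1\times G_1'$ and $H\times H'\to G_2\times G_2'$, the product $H\times H'$ being strongly connected by Remark~\ref{remarkisog}.4 as well. \emph{The hard part} will be the correspondence: showing that the two operations ``pull back, take $(\,\cdot\,)_0$, push forward'' in the two directions are mutually inverse bijections. The danger is that composing a pull-back with the strong-identity-component operation need not obviously recover the original subgroup on the nose, so I expect to spend most of the effort proving that $\phi_2\big((\phi_1^{-1}(J_1))_0\big)$ has a well-defined correspondent back to exactly $J_1$, which should come down to the uniqueness of the strong identity component together with the fact that all the intermediate kernels have type strictly below $\tau$ and therefore vanish in the type-$\tau$ bookkeeping.
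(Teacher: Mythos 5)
Your proposal is correct and follows essentially the same route as the paper: transitivity via the strong identity component of the fiber product over $G_2$ (the paper writes it as $\bigl((\phi_2\times\phi_2')^{-1}(\mathrm{Diag}\,G_2\times G_2)\bigr)_0$), the subgroup correspondence via $J_1\mapsto\phi_2\bigl((\phi_1^{-1}(J_1))_0\bigr)$ with bijectivity resting on the uniqueness of the strongly connected lift $P=(\phi_i^{-1}(J_i))_0$, inclusions and normality transferred through nesting of these lifts, quotients compared via the induced isogenies $Q/P\to K_i/J_i$, and the product statement via Remark~\ref{remarkisog}.4. The ``hard part'' you flag is resolved in the paper exactly as you anticipate: a strongly connected subgroup of a strongly connected group having the same type and typical differential transcendence degree must be the whole group, which gives both surjectivity of the restricted maps and uniqueness of $P$.
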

\begin{proof}Since reflexivity and symmetry follow immediately from the definition, we
need only show that isogeny is transitive. So, let $%
G_{1},G_{2},H,G_{2},G_{3},H^{\prime }$ be strongly connected $\Delta$-$k$-groups with type $\tau$, and
let $\phi _{i}:H\longrightarrow G_{i}$, $i=1,2,$ $\phi _{i}^{\prime
}:H^{\prime }\longrightarrow G_{i},$ $i = 2,3,$  be isogenies. Then, the $\Delta$-$k$-group $H\times H^{\prime }$
is strongly connected, and $\phi _{2}\times \phi _{2}^{\prime }:H\times H^{\prime
}\longrightarrow G_{2}\times G_{2}$ is surjective. Let $K=\left( (\phi
_{2}\times \phi _{2}^{\prime })^{-1}\left( \text{Diag }G_{2}\times
G_{2}\right) \right)_0$, where $(\ldots)_0$ denotes the strong identity component. Then, $K$ is a strongly connected $\Delta$-$k$-group. Let $\pi _{H},\pi _{H^{\prime }},$
be the projections from $H\times H^{\prime }$ to $H$, and to 
$H^{\prime }$.  We have the following diagram

\begin{equation*}
\begin{array}{ccccccccc}
&  &  &  & K &  &  &  &  \\ 
&  &  & ^{\pi _{H}\swarrow } &  & ^{\searrow \pi _{H'}} &  &  &  \\ 
&  & H &  &  &  & H^{\prime } &  &  \\ 
& ^{\phi _{1}\swarrow } &  &^{\phi _{2}\searrow }  &  &^{\phi' _{2}\swarrow }&  & ^{\phi' _{3}\searrow } &  \\ 
G_{1} &  &  &  & G_2 &  &  &  & G_{3}%
\end{array}%
\end{equation*}%
We claim that the restrictions of the $\Delta$-$k$ homomorphisms $\pi _{H}$ and  $\pi _{H^{\prime }}$ to $K$ are isogenies. Since the composition of isogenies  is an isogenies, our claim implies that $G_1$ and $G_3$ are isogenous and so the isogeny relation is an equivalence relation.\\[0.1in]
To prove our claim, first note that (\ref{eqn1}) implies that $\tau \left( H\times H^{\prime }\right) =\tau \left( H\right) =\tau
\left( H^{\prime }\right) =\tau \left( G_{2}\times G_{2}\right) $, and $\tau
\left( \ker \left( \phi _{2}\times \phi _{2}^{\prime }\right) \right)
=\tau \left( \ker \phi _{2}\times \ker \phi _{2}^{\prime }\right)
<\tau \left( H\times H^{\prime }\right) $.  If $(h,h') \in K\cap \ker \pi_H$ then $h = 1$ and, since $\phi_2(h) = \phi_2'(h')$, we have $h' \in \ker \phi_2'$. Therefore $\ker \pi _{H}=1\times \ker \phi _{2}^{\prime
}$ and so $\tau \left( \ker \pi _{H}\right) <\tau \left( H\right) $. \
Similarly, $\tau \left( \ker \pi _{H^{\prime }}\right) <\tau \left(
H^{\prime }\right) $. We know that $\tau \left( K\right) =\tau \left(
G_{2}\right) =\tau \left( H\right) $. Since $\tau \left( \ker \pi
_{H}\right) <\tau \left( H\right) $, it follows that $a_{\tau }\left( \pi
_{H}\left( K\right) \right) =a_{\tau }\left( H\right) $. Since $H$ is
strongly connected, $\pi _{H}\left( K\right) =H$, and, therefore, $\pi _{H}$ is an
isogeny. Similarly, $\pi _{H^{\prime }}$ is an isogeny and this shows that the  isogeny relation is an equivalence relation on the set of strongly connected $\Delta$-$k$-groups.\\[0.1in]
Suppose we have the following isogeny diagram of strongly connected $\Delta$-$k$-groups:%
\begin{equation*}
\begin{array}{ccccc}
&  & H &  &  \\ 
& ^{\phi _{1}\swarrow } &  & ^{\searrow \phi _{2}} &  \\ 
G_{1} &  &  &  & G_{2}%
\end{array}%
.
\end{equation*}%
We define the following correspondence between strongly connected $\Delta$-$k$-subgroups of $%
G_{1}$ of type $\tau =\tau \left( G_{1}\right) $ and strongly connected $\Delta$-$k$%
-subgroups of $G_{2}$ of type $\tau =\tau \left( G_{2}\right) $:  Subgroups 
$J_{1}$ of $G_{1}$ and $J_{2}$ of $G_{2}$ correspond if there exists a strongly connected 
$\Delta$-$k$-subgroup $P$ of $H$ such that $\phi _{i}|_{P}$ maps $P$ to $J_i$ and  is an
isogeny, $i=1,2$. We claim that given $J_{1}$, there is a unique choice for $P$, namely,
the strong identity component of $\phi _{1}^{-1}(J_{1})$, which is, of course, a strongly connected $\Delta$-$k$-subgroup of $H$. First note that for any such $P$, Remark~\ref{remarkisog}.2 implies that $\tau(P) = \tau$ and $a_\tau(P) = a_\tau(J_1) = a_\tau(J_2)$. Furthermore any such $P$ must lie in $\phi _{1}^{-1}(J_{1})$ and, since it is assumed to be strongly connected, it must be contained in the strong identity component $(\phi _{1}^{-1}(J_{1}))_0$ of  $\phi _{1}^{-1}(J_{1})$. Since $\tau(\ker \phi_1) < \tau$ we have that $\tau(\ker\phi_1 \cap \phi^{-1}(J_1)) < \tau$ and so $\phi$ restricted to $(\phi _{1}^{-1}(J_{1}))_0$ is an isogeny.  In particular $\tau((\phi _{1}^{-1}(J_{1}))_0) = \tau $ and $a_\tau((\phi _{1}^{-1}(J_{1}))_0) = a_\tau(G_1)$.  Since $P \subset (\phi _{1}^{-1}(J_{1}))_0$ and both of these groups are strongly connected we have $P = (\phi _{1}^{-1}(J_{1}))_0$. A similar argument shows that $P = (\phi _{2}^{-1}(J_{2}))_0$. If follows easily that the correspondence is unique.\\[0.1in]
Now, we shall show that if $J_{i},K_{i},i=1,2,$ are corresponding strongly connected $%
\Delta $-$k$-subgroups of $G_{i}$ of type $\tau $, then%
\begin{equation*}
J_{1}\subseteq K_{1}\Longleftrightarrow J_{2}\subseteq K_{2}.
\end{equation*}
Let $P,Q,i=1,2,$ be the unique strongly connected $\Delta$-$k$-subgroups of $H$ of type $%
\tau $ such that $\phi _{i}\left( P\right) =J_{i}$ and $\phi
_{i}\left( Q\right) =K_{i}$. $P$ is the strong identity component of $\phi _{i}^{-1}\left(
J_{i}\right) $ and $Q$ is the strong identity component of $\phi _{i}^{-1}\left( K_{i}\right) $%
. $J_{i}\subseteq K_{i}$ implies that $\phi _{i}^{-1}\left(
J_{i}\right) \subseteq \phi _{i}^{-1}\left( K_{i}\right) $. $\tau
\left( P\right) =\tau \left( Q\right) =\tau .$ \ Since $P$ is a strongly connected $%
\Delta $-$k$-subgroup of $\phi _{i}^{-1}\left( K_{i}\right) $ of the same
type as the latter, it follows, as above, that $P$ is contained in its strong identity component $%
Q$. The reverse implication is proved the same way. Now suppose that $J_{1}$
is normal in $K_{1}$. Then, $\phi _{1}^{-1}\left( J_{1}\right) $ is
normal in $\phi _{1}^{-1}\left( K_{1}\right) $ and has the same type. By
Remark~\ref{remarksolid}.2, $P$ is normal in $\phi _{1}^{-1}\left( K_{1}\right) $%
. Therefore, $P$ is normal in $Q$. Since $\phi _{2}$ is surjective, $%
J_{2}$ is normal in $K_{2}$. To finish the proof, we need only show that the $\Delta$-$k$-groups $
K_{1}/J_{1}$ and $K_{2}/J_{2}$ are isogenous and have the same type $\tau$. First, note that Remark~\ref{remarksolid}.5 says that $K_i / J_i$ are strongly connected and have type $\tau$.
We now define a map $\phi:Q/P \rightarrow K_1/J_1$ by 
$\phi \left( q\text{ mod }P\right) =\phi _{1}\left( q\right) $ mod $%
J_{1}$ for any $q \in Q$. To see that this is well defined, let $q_{1},q_{1}^{\prime }\in Q$, and suppose $q_{1}$ mod $%
P=q_{1}^{\prime }$ mod $P$. Then, $\phi _{1}\left( q_{1}-q_{1}^{\prime
}\right) \in J_{1}$. Therefore, $\phi_1(q_{1})$ mod $J_{1}=\phi_1(q_{1}^{\prime })$ mod $%
J_{1}$. So, the map $\phi :Q/P\longrightarrow K_{1}/J_{1}$ is well
defined.  It is clearly a homomorphism. We claim that it is surjective. 
Let $k_{1}\in K_{1}.$  There exists $q\in Q$ such that $\phi _{1}\left(
q\right) =k_{1}$ and so  $\phi (q$ mod $P)=k_{1}$ mod $J_{1}$. A calculation shows that the kernel of $\phi$ is $[Q\cap\phi_1^{-1}(J_1)]/P$. 
The strong identity component $P$ of $\phi _{1}^{-1}\left( J_{1}\right) $ is contained in $%
Q\cap \phi _{1}^{-1}\left( J_{1}\right) $. Thus, $\tau \left( \lbrack
Q\cap \phi _{1}^{-1}\left( J_{1}\right) ]/P\right) <\tau \left( \phi
_{1}^{-1}\left( J_{1}\right) \right) =\tau \left( Q\right) =\tau \left(
Q/P\right) $ since $Q$ is strongly connected. So, $\phi :Q/P\longrightarrow
K_{1}/J_{1}$ is an isogeny. Similar reasoning shows that one can define a map  $\psi
:Q/P\longrightarrow K_{2}/J_{2}$ and that it too  is an isogeny. The above proof also shows that $J_1$ is defined over $k$ if and only if $J_2$ is defined over $k$. \end{proof}
An immediate corollary is:
\begin{cor} \label{roscor}Let $G_{1}$ and $G_{2}$ be isogenous strongly connected $\Delta$-$k$-groups. Either both
are almost simple or neither is.
\end{cor}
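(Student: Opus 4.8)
The plan is to read off the corollary from the subgroup correspondence established in Proposition~\ref{rosprop}. Recall that a $\Delta$-group is almost simple exactly when it is strongly connected and admits no proper normal $\Delta$-subgroup of full type $\tau$. Since $G_1$ and $G_2$ are isogenous, they are strongly connected with $\tau(G_1)=\tau(G_2)=\tau$ by Remark~\ref{remarkisog}.2, so Proposition~\ref{rosprop} supplies an inclusion- and normality-preserving bijection between the strongly connected $\Delta$-subgroups of type $\tau$ of $G_1$ and those of $G_2$, under which $G_1$ corresponds to $G_2$ (the apex $P=H$ witnesses this). Because isogeny is symmetric, it suffices to prove that if $G_1$ is \emph{not} almost simple, then $G_2$ is not almost simple.

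First I would manufacture, from the failure of almost simplicity of $G_1$, a witness that is strongly connected. By hypothesis there is a proper normal $\Delta$-subgroup $N_1\subsetneq G_1$ with $\tau(N_1)=\tau$. Such an $N_1$ need not be strongly connected, so I pass to its strong identity component $(N_1)_0$. By Remark~\ref{remarksolid}.2, $(N_1)_0$ is characteristic in $N_1$ and hence normal in $G_1$; it is strongly connected by definition, and $\tau((N_1)_0)=\tau(N_1)=\tau$ follows from (\ref{eqn1}) together with $\tau(N_1/(N_1)_0)<\tau$. Moreover $(N_1)_0\subseteq N_1\subsetneq G_1$, so it is proper. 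Thus $(N_1)_0$ is a proper normal strongly connected $\Delta$-subgroup of $G_1$ of type $\tau$.

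Next I would transport this across the correspondence. Let $N_2$ be the strongly connected $\Delta$-subgroup of $G_2$ of type $\tau$ corresponding to $(N_1)_0$. Since the correspondence preserves normality, and $(N_1)_0\trianglelefteq G_1$ with $(N_1)_0\leftrightarrow N_2$ and $G_1\leftrightarrow G_2$, we conclude $N_2\trianglelefteq G_2$. For properness I invoke injectivity of the bijection: $G_2$ corresponds to $G_1$, so if $N_2=G_2$ then uniqueness of the correspondence forces $(N_1)_0=G_1$, contradicting properness. Hence $N_2$ is a proper normal $\Delta$-subgroup of $G_2$ of type $\tau$, so $G_2$ is not almost simple. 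Exchanging the roles of $G_1$ and $G_2$ yields the stated equivalence.

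The only genuine obstacle is a mismatch of quantifiers: almost simplicity is defined through \emph{arbitrary} proper normal subgroups, whereas Proposition~\ref{rosprop} only governs \emph{strongly connected} subgroups of type $\tau$. The crux is therefore the reduction in the second paragraph, namely that a type-$\tau$ normal witness may always be replaced by its strong identity component without sacrificing normality, type, or properness; once this is in place, the passage through the correspondence is purely formal.
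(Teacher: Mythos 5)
Your proof is correct and follows exactly the route the paper intends: the paper states this corollary as an immediate consequence of Proposition~\ref{rosprop} with no written argument, and what you give is precisely the spelling-out of that deduction (transport a witness of non-almost-simplicity through the subgroup correspondence, using that $G_1\leftrightarrow G_2$ and that the correspondence preserves normality and properness). The one detail the paper glosses over --- that an arbitrary proper normal subgroup of full type $\tau$ must first be replaced by its strong identity component, which stays normal in $G_1$ by Remark~\ref{remarksolid}.2 and keeps type $\tau$ and properness, so that Proposition~\ref{rosprop} applies --- is exactly the reduction you identify and handle correctly.
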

\begin{proof}
Suppose $G_{1}$ is almost simple.  Let $N$ be a normal $\Delta $-subgroup
of $G_{2}$.  There exists a finitely generated $\Delta $-$k$-subfield $%
k^{\prime }$ of $\mathbb{U}$ such that $N$ is defined over $k^{\prime }$. 
Since $k^{\prime }$ is finitely $\Delta $-generated over $k$, $\mathbb{U}$
is universal over $k^{\prime }$.  Moreover, as we remarked in Section~\ref{subsec2.1},
the type of $G_{2}$ is invariant under the choice of field of definition. 
If the type of $N$ is $\tau =\tau \left( G_{2}\right) =\tau \left(
G_{1}\right) $, Proposition~\ref{rosprop} gives us a proper normal $\Delta $-$%
k^{\prime }$- subgroup of $G_{1}$ of type $\tau $, contradicting the almost
simplicity of $G_{1}$.
\end{proof}
Another corollary is
\begin{cor} \label{corcom}Let $G_{1}$ and $G_{2}$ be isogenous strongly connected $\Delta$-groups. Either both
are commutative or neither is.
\end{cor}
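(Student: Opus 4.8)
The plan is to reduce the statement to a single isogeny and then exploit strong connectedness through a centralizer argument. First I would choose, as in the proof of Proposition~\ref{rosprop}, a strongly connected $\Delta$-group $H$ together with isogenies $\phi_i:H\to G_i$ ($i=1,2$); the intermediate group supplied by the definition of isogeny can be replaced by its strong identity component without destroying surjectivity (since the targets $G_i$ are strongly connected, the images of the strong identity component lie in $\calS(G_i)$ and hence equal $G_i$) or the smallness of the kernels, so $H$ may be taken strongly connected. It then suffices to prove two implications: if $H$ is commutative then each $\phi_i(H)=G_i$ is commutative, and conversely if some $G_i$ is commutative then $H$ is commutative. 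The first is immediate, since the image of a commutative group under a homomorphism is commutative. Granting the second, if $G_1$ is commutative then $H$ is commutative, hence $G_2$ is commutative, and symmetrically; this yields the desired dichotomy.

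The real content is the converse, so suppose $\phi:H\to G$ is an isogeny with $H$ strongly connected and $G=\phi(H)$ commutative, and set $N=\ker\phi$. Since $\phi$ is an isogeny, $N$ is a normal $\Delta$-subgroup of $H$ with $\tau(N)<\tau(H)$, so Proposition~\ref{solidprop} shows that $N$ is central in $H$. Now fix $a\in H$ and consider the map $c_a:H\to H$ given by $c_a(x)=xax^{-1}a^{-1}$. Because $G$ is commutative, $\phi(c_a(x))=\phi(x)\phi(a)\phi(x)^{-1}\phi(a)^{-1}=1$, so $c_a$ maps $H$ into $N$; and because $N$ is central, the identity $c_a(xy)=x\,c_a(y)\,x^{-1}\,c_a(x)$ together with centrality of $N$ gives $c_a(xy)=c_a(x)\,c_a(y)$, so that $c_a:H\to N$ is a $\Delta$-homomorphism whose kernel is exactly the centralizer $Z_H(a)$.

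Consequently $H/Z_H(a)$ embeds $\Delta$-rationally into $N$, and Corollary~\ref{taucor} gives $\tau(H/Z_H(a))\le\tau(N)<\tau(H)$. Since $H$ is strongly connected, no proper $\Delta$-subgroup $K$ can satisfy $\tau(H/K)<\tau(H)$, forcing $Z_H(a)=H$; that is, $a$ is central. As $a\in H$ was arbitrary, $H=Z(H)$ is commutative, completing the converse and hence the proof. The point that requires care, and the main obstacle, is that centrality of $N$ by itself only shows that $H$ is nilpotent of class at most two (a Heisenberg-type group would be consistent with that much), so commutativity genuinely comes from the \emph{second} appeal to strong connectedness in the centralizer argument, not merely from $\ker\phi$ being central.
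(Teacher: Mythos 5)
Your proof is correct, but it runs the argument in the opposite direction from the paper, and the comparison is instructive. The paper invokes the equivalence in Proposition~\ref{rosprop}'s setup to pass to the \emph{dual} diagram: down-isogenies $\psi_i\colon G_i\to K$ onto a common quotient $K$. Commutativity of $G_1$ pushes forward to $K$, and if $G_2$ were non-commutative, Lemma~\ref{lemcom} would give $\tau(D_\Delta(G_2))=\tau(G_2)=\tau(K)$, so the image $\psi_2(D_\Delta(G_2))$, which sits inside $D_\Delta(K)=\{1\}$, would have full type --- a contradiction. You instead work with up-isogenies $\phi_i\colon H\to G_i$ from a common cover, first (correctly) arguing that $H$ may be replaced by its strong identity component without losing surjectivity or smallness of kernels, and then proving that commutativity \emph{pulls back} along an isogeny with strongly connected source. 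Your pull-back step is in essence a re-proof of Lemma~\ref{lemcom}: since $G=\phi(H)$ is commutative, the commutator subgroup of $H$ lies in $\ker\phi$, hence $D_\Delta(H)\subseteq\ker\phi$ has type $<\tau(H)$, and Lemma~\ref{lemcom} immediately forces $H$ to be commutative; your inline argument (kernel central by Proposition~\ref{solidprop}, the map $c_a$ a $\Delta$-homomorphism into the central kernel, $\tau(H/Z_H(a))<\tau(H)$ via Corollary~\ref{taucor}, strong connectedness forcing $Z_H(a)=H$) is exactly the mechanism of the paper's proof of that lemma, with $\ker\phi$ playing the role of $D_\Delta(H)$. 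What each route buys: the paper's quotient-side argument never has to worry about whether the covering group is strongly connected and isolates Lemma~\ref{lemcom} as a reusable statement, whereas your cover-side argument works straight from the definition of isogeny at the price of the reduction to a strongly connected cover (which you handle properly via the fact that the image of the strong identity component has full type and typical transcendence degree, hence equals the strongly connected target). Your closing observation is also on point: centrality of the kernel alone only bounds $H$ by nilpotency class two, so the second appeal to strong connectedness is genuinely necessary --- this is precisely why the paper needs Lemma~\ref{lemcom} rather than Proposition~\ref{solidprop} alone.
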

The proof of Corollary~\ref{corcom} depends on the following concept and lemma.
\begin{defin}\label{defcom} Let $G$ be a $\Delta$-$k$-group. The {\rm differential commutator group} $D_\Delta(G)$ is the smallest $\Delta$-subgroup of $G$ containing the commutator subgroup of $G$. It is defined oer $k$.
\end{defin}
An example of Cassidy ({\it cf.},~p.111 of \cite{kolchin_groups}) shows that $D_\Delta(G)$ may be strictly larger than the commutator subgroup of $G$.
\begin{lem} \label{lemcom} Let $G$ be a $\Delta$-$k$-group. If $G$ is strongly connected and not commutative, then $\tau(D_\Delta(G)) = \tau(G)$.
\end{lem}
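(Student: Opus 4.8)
The plan is to argue by contradiction: suppose $\tau(D_\Delta(G)) < \tau(G)$ and write $D = D_\Delta(G)$. First I would observe that $D$ is normal in $G$. Indeed, the ordinary commutator subgroup $[G,G]$ is characteristic, and conjugation by any $g$ is a $\Delta$-automorphism of $G$ fixing $[G,G]$; it therefore fixes the smallest $\Delta$-subgroup containing $[G,G]$, so $D$ is characteristic and in particular normal. Since $G$ is strongly connected and $D$ is a normal $\Delta$-subgroup of strictly smaller type, Proposition~\ref{solidprop} forces $D$ to be central, giving $[G,G] \subseteq D \subseteq Z(G)$.

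The key step is then to exploit this centrality exactly as in the proof of Proposition~\ref{solidprop}. Because all commutators now lie in the center, for each point $g \in G$ the map $c_g : G \to D$ defined by $c_g(h) = ghg^{-1}h^{-1}$ is a \emph{homomorphism}: the identity $[g, h_1 h_2] = [g,h_1]\,h_1[g,h_2]h_1^{-1}$ collapses to $[g,h_1][g,h_2]$ once $h_1[g,h_2]h_1^{-1} = [g,h_2]$ by centrality. Its kernel is the centralizer $Z_G(g) = \{h \in G \mid gh = hg\}$, which is a $\Delta$-subgroup of $G$ (\cite{kolchin_groups}, Section IV.4). Factoring $c_g$ through Kolchin's quotient theorem precisely as in Proposition~\ref{solidprop} yields an injective $\Delta$-morphism $G/Z_G(g) \hookrightarrow D$, whence Corollary~\ref{taucor} gives $\tau(G/Z_G(g)) \leq \tau(D) < \tau(G)$.

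Finally I would invoke strong connectedness: the inequality $\tau(G/Z_G(g)) < \tau(G)$ places $Z_G(g)$ in the family $\calS$, whose unique minimal element is $G_0 = G$; hence $G \subseteq Z_G(g)$, so $Z_G(g) = G$, i.e. $g \in Z(G)$. As $g$ ranges over the $\frakU$-points of $G$ this forces $G = Z(G)$, contradicting the hypothesis that $G$ is not commutative. Thus $\tau(D_\Delta(G)) = \tau(G)$, the reverse inequality $\tau(D) \leq \tau(G)$ being automatic from (\ref{eqn1}). The hard part will be nothing computational but rather the conceptual reduction to the central case and the verification that $c_g$ is genuinely a $\Delta$-homomorphism with centralizer kernel; once Proposition~\ref{solidprop} supplies centrality, the bilinearity of the commutator and the centralizer argument are routine, so essentially all the weight is carried by Proposition~\ref{solidprop} together with Corollary~\ref{taucor}.
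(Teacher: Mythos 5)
Your proof is correct and follows essentially the same route as the paper's: assume $\tau(D_\Delta(G)) < \tau(G)$, invoke Proposition~\ref{solidprop} to place $D_\Delta(G)$ in the center, observe that centrality makes each commutator map $c_g$ a $\Delta$-homomorphism into $D_\Delta(G)$, and let strong connectedness force this homomorphism to be trivial, contradicting non-commutativity. The only cosmetic difference is the final step: the paper kills the image directly via Remark~\ref{remarksolid}.5 (a homomorphic image of a strongly connected group is trivial or of full type), whereas you pass to the kernel $Z_G(g)$ and argue via Corollary~\ref{taucor} that it lies in $\calS$ and hence equals $G$ --- the two are interchangeable.
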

\begin{proof} Let $a$ be in $G$.  Let $c_a:G\rightarrow G$ be the $\Delta$-map that sends $x$ to $axa^{-1}x^{-1}$. If $D_\Delta(G)$ has smaller type than $G$, then Proposition~\ref{solidprop} implies that $D_\Delta(G)$ is contained in the center $Z(G)$. In this case we have $axya^{-1}y^{-1}x^{-1} = axa^{-1}(aya^{-1}y^{-1})x^{-1} = (axa^{-1}x^{-1})(aya^{-1}y^{-1})$ and therefore $c_a$ is a $\Delta$-homomorphism of $G$ into $D_\Delta(G)$. Since $G$ is strongly connected, $c_a(G) = \{1\}$.  Thus, $a \in Z(G)$. It follows that $G$ is commutative.\end{proof}
%\begin{proof} Let $h$ be any element of $G$ and let $\phi:G\longrightarrow G$ be the map $\phi:g\mapsto ghg^{-1}$.  This map is constant on the cosets of $Z(h) = \{g\in G \ | \ ghg^{-1} = h\}$ and so, by Theorem 3  of (\cite{kolchin_groups}, Ch.~IV.4) induces a regular injection $\tilde{\phi}$ of the homogeneous space $G/Z(h)$ into $G$. Letting $\lambda:G\rightarrow G$ be the map $\lambda(g) = gh^{-1}$, one sees that the map $\lambda\circ \tilde{\phi}$ is a regular injection of $G/Z(h)$ into $D_\Delta(G)$.  Therefore $\tau(D_\Delta(G)) \geq \tau(G/Z(h))$. Since $G$ is noncommutative, there is an $h$ such that $Z(h)$ is a proper subgroup of $G$. Since $G$ is strongly connected, we must have $\tau(G/Z(h)) = \tau(G)$ for this choice of $h$.  Therefore $\tau(D_\Delta(G)) = \tau(G)$\end{proof}
%
\noindent{\bf Proof of Corollary~\ref{corcom}:}
Let $K$ be a $\Delta$-$k$-group such that for  $G_{1},G_{2},$ and $K$ we  have the commutative
diagram:%
\begin{equation*}
\begin{array}{ccccc}
G_{1} &  &  &  & G_{2} \\ 
& ^{\psi _{1}\searrow } &  & ^{\swarrow \psi _{2}} &  \\ 
&  & K &  & 
\end{array}%
\end{equation*}%
with isogenies $\psi_i$.
Assume $G_{1}$ is commutative. Then, $K$ is commutative.  Suppose $G_{2}$ is not commutative. Lemma~\ref{lemcom} implies that $\tau (D\left( G_{2}\right) )=\tau \left( G_{2}\right) =\tau \left(
K\right) $. Since $\psi_2$ is an isogeny,  $\tau(\psi_2\left( D_\Delta\left( G_{2}\right) \right) =\tau(K)$ as well. Since $\psi_2(D_\Delta(G_2))\subset  D_\Delta(K)$ we have that $D_\Delta(K) \neq \{1\}$, a contradiction.\hfill \QED\\[0.2in]
 We give two examples of  infinite families of isogenous, pairwise nonisomorphic $\Delta$-groups. 
\begin{ex}\label{Gaex}{\rm Let $\Delta = \{ \delta\}$ and $\frakU$ be a $\Delta$-universal field with $\Delta$-constants $C$.  We shall exhibit an infinite number of almost simple $\Delta$-groups that are isogenous  to $\Ga(\frakU)$ but are pairwise nonisomorphic. For each $n \in \NX$, let 
\[G_n = \{g(a,b) = \left(\begin{array}{ccc} a & 0 & 0 \\ 0 & 1 & b \\ 0&0&1\end{array}\right) \ | \ a \in \Gm(\UX), b \in \Ga(\UX), (\delta a) a^{-1} = \delta^n(b)\}.\] 
The map $\alpha:G_n \rightarrow \Ga(\frakU)$ given by $\alpha(g(a,b)) = (\delta a)a^{-1}$ is a surjective homomorphism with kernel
\[K_n = \{ g(a,b) \ | \ \delta a = 0, \delta^n(b) = 0 \}.\]
Note that $G_n$ has differential type 1 and that $K_n$ has differential type $0$; so $\alpha$ is an isogeny and therefore $G_n$ is isogenous to $\Ga(\frakU)$.\\[0.1in]
We shall now show that $G_n$ is strongly connected. Let $(G_n)_0$ be the strong identity component of $G_n$ and let $\pi_1(g(a,b)) = a$ and $\pi_2(g(a,b)) = b$ be the projections of $G_n$ onto $\Gm(\UX)$ and $\Ga(\UX)$ respectively. Since these latter groups are strongly connected, these projections are surjective when restricted to  $(G_n)_0$ as well. Using $\pi_1$ we see that for any $g(a,b) \in G_n$ there exists $b_1 \in \Ga(\UX)$ such that $g(a,b_1) \in (G_n)_0$ and therefore  $g(a,b) = g(1,b-b_1)g(a,b_1)$. Note that $\delta^n(b-b_1) = 0$.  This implies that the group $H_n = \{ b \in \Ga(\UX) \ | \ \delta^n(b) = 0\}$ is mapped surjectively onto $G_n/(G_n)_0$ via the map $b \mapsto g(1,b) \mod (G_n)_0$.  Using $\pi_2$, a similar argument shows that  the map $c \mapsto g(c,0) \mod (G_n)_0$ maps $\Gm(C) = \{ c\in \UX \ |  \ c\neq 0, \delta(c) = 0\}$ surjectively onto $G_n/(G_n)_0$. Therefore $G_n/(G_n)_0$ is isomorphic to a quotient of $H_n$ and a quotient of $\Gm(C)$. Taking a fundamental system of solutions of the differential equation $%
\delta ^{n}=0$, we see that $H_{n}$ is isomorphic as a $\Delta $-group to $%
\left( C^{n},+\right) $.  Therefore, any quotient $\Delta $-group is
torsion-free. This implies that $G_{n}/\left( G_{n}\right) _{0}$ is
torsion-free.  But, the $\Delta $-group $G_{n}/\left( G_{n}\right) _{0}$ is
also a quotient of $\mathbb{G}_{\text{m}}\left( C\right) $, whose torsion
group is Kolchin dense. \ Therefore, either the torsion group of $%
G_{n}/\left( G_{n}\right) _{0}$ is Kolchin dense, or $G_{n}/\left(
G_{n}\right) _{0}$ is trivial. \ Since the latter must hold, $G_{n}$ is
strongly connected.\\[0.1in]
Since $\Ga(\frakU)$ is almost simple, Corollary~\ref{roscor} implies that $G_n$ is almost simple.  \\[0.1in]
We now claim that for $n \neq m, n,m \in \NX$,  $G_n$ and $G_m$ are not $\Delta$-isomorphic.  To see this, note first that the set of unipotent matrices in any $G_n$ is precisely $\{g(1,b) \ | \ \delta^n(b) = 0\}$.  This is a $\Delta$-group of differential type $0$ and typical differential dimension equal to  $n$.  The property of being unipotent is preserved under $\Delta$-isomorphism (\cite{cassidy1}, Proposition 35) so our claim is proved. Note also that there does not exist an isogeny from $\Ga (\UX)$ onto $G_n$, and, therefore, this example does not contradict Example 2.11.3.}\end{ex}
\begin{ex}\label{cartanexple} {\rm  Let $\mathbb{C}$ be the field of complex numbers, and let $k$ be the
subfield $\mathbb{C}\left( x,t,\{e^{c^{2}t},e^{icx}\}_{c\in \mathbb{C}}\right) $
of the field $\mathbb{M}$ of functions meromorphic on $\mathbb{C}^{2}$. \
Let $\Delta =\left\{ \delta _{x},\delta _{t}\right\} $, where $\delta _{x}$
acts on $\mathbb{M}$ as $\frac{\partial }{\partial x}$ and $\delta _{t}$
acts as $\frac{\partial }{\partial t}$ . \ Then, $\mathbb{M}$ is a $\Delta $%
-field, and $k$ is a $\Delta $-subfield of $\mathbb{M}$. \ Let $\mathbb{U}$
be a $\Delta $-field universal over $k$, and with field $C$ of constants, as
usual. \ We do not assume that $\mathbb{M}$ is a subfield of $\mathbb{U}$.  Let \[H = \{ u \in \Ga(\frakU) \ | \ \dd_tu -\dd_x^2 u = 0 \} \] be the subgroup of $\Ga(\frakU)$ defined by the Heat Equation. We shall exhibit an infinite family of almost simple $\Delta$-groups that are isogenous to $H$ but not $\Delta$-isomorphic to $H$.  In particular, we shall show that if $\lambda \in \frakU$ is any nonzero solution of $\delta_t\lambda + \delta_x^2\lambda = 0$, then $H$ is isogenous to \[H_\lambda = \{u \in \Ga(\frakU) \ | \ \dd_tu - \dd_x^2u + 2\frac{\dd_x\lambda}{\lambda} \dd_xu = 0 \}\]
and that there are an infinite number of $\lambda \in k$ whose corresponding groups are pairwise not $\Delta$-isomorphic.\\[0.1in]
The groups $H$ and $H_\lambda$ have differential type $1$ and, from Example~\ref{evolex}, we know that they are almost simple.  An elementary calculation shows that  the map $\phi(u) = (1/\lambda) \dd_xu$ is a homomorphism from $H_\lambda$ onto $H$. The kernel of $\phi$ is $\Ga(C)$. Therefore these groups are isogenous.\\[0.1in]
For any nonzero $c\in \CX$,  \[\lambda_c = e^{c^2t}\cos cx\] is a solution in $k$ of $\dd_t\lambda+\dd_x^2\lambda = 0$.  We define
\[H_c = H_{\lambda_c} = \{ u \in \Ga(\frakU) \ | \ \dd_tu = \dd_x^2u + 2(c\tan cx) \dd_x u \} .\]
Each of these groups is isogenous to $H$ so they are all pairwise isogenous. We shall show that for nonzero $c,d \in C$ with $c/d \notin \QX$, the groups $H_c$ and $H_d$ are not isomorphic.
 The proof of this claim follows from the special nature of the differential coordinate rings  $\frakU\{H_c\}_\Delta$ and $\frakU\{H_d\}_\Delta$ of the groups.  \begin{eqnarray*}
\mathbb{U}\left\{ H_{c}\right\} _{\Delta } &=&\mathbb{U}\left\{ z\right\}
_{\Delta },\text{ where }\delta _{t}z=\delta _{x}^{2}z+2\left( c\tan
cx\right) \delta _{x}z, \\
\mathbb{U}\left\{ H_{d}\right\} _{\Delta } &=&\mathbb{U}\left\{ y\right\}
_{\Delta },\text{ where }\delta _{t}y=\delta _{x}^{2}y+2\left( d\tan
dx\right) \delta _{x}y.
\end{eqnarray*}%
Note that both groups are defined over $k$. \ The form of the defining
differential equations imply that
  \[\frakU\{H_c\}_\Delta = \frakU\{H_c\}_{\{\delta_x\}} = \frakU[z, \dd_x z, \ldots , \dd^n_x z, \ldots ] \mbox{ and } \frakU\{H_d\}_\Delta = \frakU\{H_d\}_{\{\delta_x\}} = \frakU[y, \dd_x y, \ldots , \dd^n_x y, \ldots ].\] where $y$ and its $\dd_x$-derivatives are algebraically independent over $\frakU$ and a similar statement is true for $z$. As  $\dd_x$-rings these two rings are $\dd_x$-isomorphic to a $\dd_x$-differential polynomial ring $\frakU\{Y\}_{\dd_x}$ and this will be a key fact in the proof of our claim.   What distinguishes these rings is their $\dd_t$-structure, which is given by the defining equations of the groups. The derivation operator $\delta_t$ acts as an "evolutionary derivation" on the $\delta_x$-differential ring.\\[0.1in]
Let us assume (with the aim of arriving at a contradiction) that $H_c$ and $H_d$ are isomorphic as $\Delta$-groups and let $\psi:H_d \rightarrow H_c$ be a differential isomorphism.  This isomorphism induces a $\Delta$-isomorphism $\psi^*:\frakU\{H_c\}_\Delta\rightarrow \frakU\{H_d\}_\Delta$, defined by the formula 
$\psi^* (f)= f$ composed with $\psi$.  Note that $\psi^*$ restricted to $U$ is the identity automorphism. Identifying both  $\frakU\{H_c\}$  and $\frakU\{H_d\}_\Delta$ with the $\dd_x$-ring $\frakU\{Y\}_{\{\dd_x\}}$, the map $\psi^*$ induces a $\dd_x$-automorphism of $\frakU\{Y\}_{\{\dd_x\}}$ and {\it a fortiori} of the quotient field $\frakU<Y>_{\{\dd_x\}}$.  The $\delta_x$-U-automorphisms of $\UX<Y>_{\delta_x}$ have been classified by Ritt (as part of his differential L\"uroth theorem) in his book (\cite{RITT}, p.~56).  He shows that any $\dd_x$-automorphism $\sigma$ of this latter field must be of the form \[\sigma(Y) = \frac{aY+b}{cY+d}\] where $a,b \in \frakU$.  It is not hard to see that if $\sigma$ furthermore maps $\frakU\{Y\}_{\{\dd_x\}}$ to itself, we must have that $\sigma(Y) = aY+b$ for some $a,b \in \frakU$. In particular, $\psi^*$ must be of the form $\psi^*(z) = ay + b$ for some $a,b \in \frakU$ and, since $\psi$ is assumed to be a homomorphism of additive groups  as well, we must have $b=0$ and so $\psi^*(z) = ay$. The following calculation will show that this leads to a contradiction. \\[0.1in] 
We have that
\begin{eqnarray*}
\dd_tz &=& \dd_x^2z + 2(c\tan cx) \dd_xz\\
\dd_ty &=& \dd_x^2y + 2(d\tan dx) \dd_xy
\end{eqnarray*}
Note that $c$ goes with the domain of the comorphism and $d$ with its range. Calculating $\dd_t(\psi^*(z))$ and $\psi^*(\dd_tz)$ we have
\begin{eqnarray*}
\dd_t(\psi^*(z)) &=& a \dd_x^2y + (2ad (\tan dx)) \dd_xy + \dd_t a y \\ 
\psi^*(\dd_tz) &=& a \dd_x^2y + (2\dd_xa + 2ac(\tan cx)) \dd_xy + (\dd_x^2a + 2 c(\tan cx)\dd_xa)y  
\end{eqnarray*}
Since $\dd_t(\psi^*(z))=\psi^*(\dd_tz)$ and $ y, \dd_x y, \dd_x^2y $ are algebraically independent, we must have
\begin{eqnarray}
ad (\tan dx) & = & \dd_xa + ac(\tan cx) \label{eqn10}\\
\dd_t a & = & \dd_x^2a + 2 c(\tan cx)\dd_xa \label{eqn11}
\end{eqnarray}
Equation (\ref{eqn10}) implies that 
\[a = \alpha \frac{\cos cx}{\cos dx}\]
where $\alpha$ is in the field of constants of $\delta_x$ in $\UX$.  Inserting this expression into equation~(\ref{eqn11}) we have 
\begin{eqnarray}
\frac{\dd_t\alpha }{\alpha}  & = &  c^2 + \frac{2d^2}{\cos^2dx} - d^2 - \frac{2c^2}{\cos^2 cx}.\label{eqn12}
\end{eqnarray}
Since $\dd_x(\frac{\dd_t\alpha }{\alpha}) = 0$, we must have that the right-hand side of equation~(\ref{eqn12}) is some constant $\gamma \in C$. A linear disjointness argument implies that $\gamma$ is in the field of constants of $k$.  In particular, it is a complex number.  So, the terms of equations (\ref{EQN00}) and (\ref{EQN000}) are complex functions.  Evaluating the right-hand side at $x =0$ and $x=\frac{2\pi}{c}$, we have
\begin{eqnarray}
d^2 - c^2 & = & \gamma \label{EQN00}\\
(\frac{2}{\cos^2(2d\pi/c)} - 1)d^2 -c^2 & = & \gamma \label{EQN000}
\end{eqnarray}
Note that $\cos(2d\pi/c)\neq 0$ since we are assuming that $d/c$ is neither an integer nor half an integer. From these equations, we can conclude that $\cos^2(2d\pi/c) = 1$. Therefore, $\cos (2 d\pi/c) = 1$ or $-1$.  So, properties of the complex cosine function tell us that 
    $2 \pi d/c$ equals  $2n \pi$ or $(2n+1) \pi$.  So, $d/c$ is either an integer or $d/c$ is half an integer, a contradiction.   Therefore, for any set of constants $S$, whose members are pairwise independent over the integers, this yields a family $\{H_c\}_{c\in S}$ of isogenous  $\Delta$-subgroups of $\Ga$ that are pairwise nonisomorphic.\\[0.1in]

%Therefore $\cos cx$ and $\cos dx$ are algebraically dependent over $C$.\\[0.1in]
%To see that this leads to a contradiction, note that  $\cos x = \frac{e^{ix} + e^{-ix}}{2}$ so $e^{icx}$ is algebraic over $C(\cos cx)$ and $e^{idx}$ is algebraic over $C(\cos dx)$. Since $\cos cx$ and $\cos dx$ are algebraically dependent over $C$, we have that $e^{icx} $ and $e^{idx}$ are algebraically dependent over $C$.  The Kolchin-Ostrowski Theorem \cite{kolchin_ostrowski} implies that there are nonzero integers $n, m$ such that $e^{(nc+md)ix} \in C$.  This can only happen if $nc+md = 0$, contradicting the fact that $c/d \notin \QX$.
%
%For any set of constants $S$, whose members are pairwise independent over the integers, this yields a family $\{H_c\}_{c\in S}$ of isogenous  $\Delta$-subgroups of $\Ga$ that are pairwise nonisomorphic.\\[0.1in]
%
\noindent We now turn to another choice of $\lambda$: $\lambda = x$. This situation   was already considered by Cartan (\cite{cartan09}, p.145-146).  Letting
\[H_x = \{ u \in \Ga(\frakU) \ | \ \dd_tu = \dd_x^2u  - \frac{2}{x} \dd_x u \}, \]
Cartan states (without proof) that $H$ and $H_x$  are not ``isomorphes holo\'edriques''. We shall show that $H$ and $H_x$ are not $\Delta$-isomorphic. \\[0.1in]
%
%We will first discuss the case when $\lambda = x$, that is, the relation between solutions $H$ of the heat equation and the solutions $H_x$ of 
%\[ \dd_tz =\dd_x^2 z +\frac{2}{x} \dd_xz.\] This situation was already considered by Cartan (\cite{cartan09}, p.145-146). In this work   Cartan states (without proof) that $H$ and $H_x$  are not ``isomorphes holo\'edriques''. We shall show that $H$ and $H_x$ are not isomorphic. 
%%
% The proof of this claim follows from the special nature of the differential coordinate rings  $\frakU\{H\}_\Delta$ and $\frakU\{H_x\}_\Delta$ of the groups.  Note that as rings we may write \[\frakU\{H\}_\Delta = \frakU[y, \dd_x y, \ldots , \dd^n_x y, \ldots ] \mbox{ and } \frakU\{H_x\}_\Delta = \frakU[z, \dd_x z, \ldots , \dd^n_x z, \ldots ]\] where $y$ and its $\dd_x$-derivatives are algebraically independent over $\frakU$ and a similar statement is true for $z$. As  $\dd_x$-rings these two rings are $\dd_x$-isomorphic to a $\dd_x$-differential polynomial ring $\frakU\{Y\}_{\dd_x}$ and this will be a key fact in the proof of our claim.   What distinguishes these rings is their $\dd_t$-structure, which is given by the defining equations of the groups. \\[0.1in]
% %
Let us assume  that $H$ and $H_x$ are isomorphic as $\Delta$-groups and let $\psi:H \rightarrow H_x$ be a differential isomorphism.  
Let \[\frakU\{H\}_\Delta = \frakU[y, \dd_x y, \ldots , \dd^n_x y, \ldots ] \mbox{ and } \frakU\{H_x\}_\Delta = \frakU[z, \dd_x z, \ldots , \dd^n_x z, \ldots ]\] where $y$ and its $\dd_x$-derivatives are algebraically independent over $\frakU$ and a similar statement is true for $z$. As above, we have a differential isomorphism $\psi^*:\frakU\{H_x\}_\Delta \rightarrow \frakU\{H\}_\Delta$ and we may assume that $\psi^*(z) = ay$. The following calculation will show that this leads to a contradiction. 
%
%This isomorphism induces a $\Delta$-isomorphism $\psi^*:\frakU\{H_x\}_\Delta\rightarrow \frakU\{H\}_\Delta$. Identifying both  $\frakU\{H\}$  and $\frakU\{H_x\}_\Delta$ with the $\dd_x$-ring $\frakU\{Y\}_{\{\dd_x\}}$, the map $\psi^*$ induces a $\dd_x$-automorphism of $\frakU\{Y\}_{\{\dd_x\}}$ and {\it a fortiori} of the quotient field $\frakU<y>_{\{\dd_x\}}$.  The automorphisms of this latter field have been classified by Ritt (as part of his differential L\"uroth theorem) in his book (\cite{RITT}, p.~56).  He shows that any $\dd_x$-automorphism $\sigma$ of this latter field must be of the form \[\sigma(Y) = \frac{aY+b}{cY+d}\] where $a,b \in \frakU$.  It is not hard to see that if $\sigma$ furthermore maps $\frakU\{Y\}_{\{\dd_x\}}$ to itself, we must have that $\sigma(Y) = aY+b$ for some $a,b \in \frakU$. In particular, $\psi^*$ must be of the form $\psi^*(z) = ay + b$ for some $a,b \in \frakU$.  The following calculation will show that this leads to a contradiction. \\[0.1in] %
 Since $\psi^*$ is also a $\Delta$-$\UX$-isomorphism, we must have that $\psi^*(\dd_t z) = \dd_t(\psi^*(z))$.  We have
 \begin{eqnarray}
 \psi^*(\dd_tz) & = &  a\dd_x^2y +(2\dd_xa-\frac{2}{x} a)\dd_xy +(\dd_x^2a - \frac{2}{x}\dd_xa)y \label{careqn1}\\
\dd_t(\psi^*(z)) & = & a\dd_x^2y + (\dd_t a) y.\label{careqn2}
 \end{eqnarray}  
 Since $\dd_x^2y, \dd_xy$ and $y$ are algebraically independent over $\frakU$, comparing coefficients of like terms in (\ref{careqn1}) and (\ref{careqn2}), we have 
 \begin{eqnarray}
 x\dd_xa - a & = & 0 \label{careqn3}\\
 x\dd_x^2a-2\dd_xa -x\dd_ta & = & 0.\label{careqn4} \end{eqnarray}  
 Equation (\ref{careqn3}) implies that $a = cx$ where $c$ is a $\delta_x$-constant in $\UX$. $\dd_xc = 0$.  Therefore $x\dd_x^2 a = 0$.  This, combined with equation (\ref{careqn4}), implies that $0 = 2\dd_x a + x\dd_ta = 2c + (\dd_tc)x^2$. Therefore $0=\dd_x(2c+(\dd_tc)x^2) = 2\dd_tc x$, so $\dd_tc=c=0$.  We therefore have that $a=0$.  This contradicts the fact that $\psi^*$ is an isomorphism and completes the proof that $H$ and $H_x$ are not $\Delta$-isomorphic.\\[0.1in]
The Heat Equation $\dd_ty = \dd_x^2y$ and the related equations $\dd_tz = \dd_x^2z - \frac{\dd_t \lambda}{\lambda} \dd_xz$ arise from the study of conservation laws and generalized symmetries (cf., \cite{bluman_kumei}, \cite{popkuniva}), which we now briefly describe.  Let $L$ be a homogeneous linear $\Delta$-polynomial.  A vector $ {\bf F} = (T(y), X(y))$ of differential polynomials is called a {\it conserved vector} if $\nabla {\bf F}(u) = \dd_tT(u) + \dd_xX(u) = 0$ for all solutions of $L(y) = 0$.  The equation $\dd_tT(u) + \dd_xX(u) = 0$ is called a {\it conservation law} for $L(y) = 0$.  If $L$ is the Heat Equation, then it follows from \cite{popkuniva} that the conservation laws for $L(y)=0$ are generated (as a $C$-vector space) by conservation laws corresponding to conserved vectors of the form 
\begin{eqnarray}{\bf F}& =& (\lambda y, -\lambda \dd_xy+(\dd_x\lambda)y) \label{eqn20}
\end{eqnarray} where $\lambda$ satisfies $\dd_y\lambda + \dd_x^2 \lambda = 0 $. In \cite{bluman_kumei}, Bluman and Kumei introduce the notion of  a {\it potential symmetry} corresponding to each conservation law.  The potential symmetry corresponding to the conserved vector (\ref{eqn20}) is given by the system
\begin{eqnarray*}
\dd_xz & = & \lambda y\\
\dd_tz & = & \lambda \dd_xy - (\dd_x\lambda)y.
\end{eqnarray*}
This latter system defines a $\Delta$-group $G \subset \Ga\times \Ga$ that has been extensively studied for $\lambda = 1$ and $\lambda = x$.  Letting $\pi_1$ and $\pi_2$ be the projections of $\Ga\times \Ga$ onto the first and second coordinates of $(y,z)$, one can show that $\pi_1(G) = H$, $\pi_2(G) = H_\lambda$ and the $\pi_i$ are isogenies.  This gives another proof that the groups $H$ and $H_\lambda$ are isogenous. \\[0.1in]
%
% The groups $H_1$ and $H_2$ already appear in  \cite{cartan09}, p.145-146, where Cartan states (without proof) that $H_1$ and $H_2$  are not ``isomorphes holo\'edriques''.  
 Further studies of the algebraic and model-theoretic structure of  the solutions of the Heat Equation and related differential algebraic groups can be found in the thesis \cite{suer07} of Sonat Suer.}\end{ex}

\subsection{Jordan-H\"older Theorem}
We can now state and prove the following analogue of the Jordan-H\"older Theorem (cf., Theorem 6, \cite{rosenlicht_56}).
\begin{thm} \label{jhthm} Let $G$ be a strongly connected $\Delta$--group with $\tau(G) = \tau \neq -1$ (i.e., $G \neq \{1\}$).  There exists a normal sequence $G = G_0 \rhd G_1 \rhd  \ldots \rhd  G_r = \{1\}$ of $\Delta$-subgroups such that for each $i = 0,\ldots ,r-1$
\begin{enumerate}
\item $G_i$ is a strongly connected group with $\tau(G_i) =\tau $,
\item $a_\tau(G_0) > a_\tau(G_1) > \ldots > a_\tau(G_{r-1})$, and
\item $G_i/G_{i+1}$ is almost simple of type $\tau$ and typical differential dimension $a_\tau (G_i) - a_\tau (G_{i+1})$.
\end{enumerate}
If $G =H_0\rhd H_1\rhd\ldots \rhd H_s = \{1\}$ is another normal sequence as above, then $r = s$ and there exists a permutation $\pi$ of $\{0,\ldots , r\}$ such that $G_{\pi(i)}/G_{\pi(i)+1}$ is $k$-isogenous to $H_i/H_{i+1}$.
\end{thm}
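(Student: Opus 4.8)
The plan is to prove existence by a greedy construction and uniqueness by induction on $a_\tau(G)$, imitating the classical Jordan--H\"older argument but with three substitutions: ``maximal normal subgroup'' becomes ``$a_\tau$-maximal strongly connected normal $\Delta$-subgroup of type $\tau$'', ``isomorphic factors'' becomes ``isogenous factors'', and every intersection or product is repaired by passing to its strong identity component. Throughout I write $[\cdot]$ for the isogeny class of a strongly connected group. I will repeatedly use the elementary fact that if $N\lhd G$ is a \emph{proper} $\Delta$-subgroup with $\tau(N)=\tau$ and $G$ is strongly connected, then $a_\tau(N)<a_\tau(G)$: otherwise (\ref{eqn2}) gives $a_\tau(G/N)=0$, so $\tau(G/N)<\tau$, forcing $N\supseteq G_0=G$.

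\emph{Existence.} Given $G_i$ strongly connected of type $\tau$ and not almost simple, I choose $G_{i+1}$ to be, among all proper normal strongly connected $\Delta$-subgroups of type $\tau$, one with $a_\tau(G_{i+1})$ maximal; this collection is nonempty, since the strong identity component of any proper normal subgroup of type $\tau$ is normal in $G_i$ (Remark~\ref{remarksolid}.2) and of type $\tau$. Then $G_i/G_{i+1}$ is almost simple: a proper normal $\Delta$-subgroup of type $\tau$ of the quotient corresponds to some $M$ with $G_{i+1}\subsetneq M\subsetneq G_i$, $M\lhd G_i$, $\tau(M)=\tau$ and, by (\ref{eqn2}), $a_\tau(M)>a_\tau(G_{i+1})$; its strong identity component would then be a proper normal strongly connected type-$\tau$ subgroup of strictly larger $a_\tau$, contradicting maximality. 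Since the $a_\tau(G_i)$ form a strictly decreasing sequence of positive integers (type is exactly $\tau$), the process halts at an almost simple $G_{r-1}$, and setting $G_r=\{1\}$ yields a chain satisfying $(1)$--$(3)$.

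\emph{Uniqueness.} I argue by strong induction on $a_\tau(G)$. The base case records that an almost simple group admits only the length-one chain, a proper type-$\tau$ normal subgroup being impossible. For the inductive step I compare the two second terms $G_1,H_1$, both normal in $G$ of type $\tau$ and strongly connected. If $G_1=H_1$ the top factors coincide and the induction hypothesis applied to $G_1$ (which has $a_\tau<a_\tau(G)$) finishes. If $G_1\ne H_1$, put $K=G_1\cap H_1\lhd G$. Strong connectivity forces $G_1\not\subseteq H_1$ and $H_1\not\subseteq G_1$ (else one of $G/G_1,G/H_1$ would acquire a proper type-$\tau$ normal subgroup), so $K$ is proper in each. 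Since $H_1$ is strongly connected and $K\subsetneq H_1$, the quotient $H_1/K$ has type $\tau$; as $G_1H_1/G_1\cong H_1/K$ is a normal $\Delta$-subgroup of full type in the almost simple $G/G_1$, it equals $G/G_1$, so $G_1H_1=G$, and the isomorphism theorems give $G/G_1\cong H_1/K$ and $G/H_1\cong G_1/K$, both almost simple.

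The crux is to route both chains through a common tail below $K$. Let $K_0=(G_1\cap H_1)_0$, strongly connected and normal in $G$ with $\tau(K_0)=\tau(K)$, $a_\tau(K_0)=a_\tau(K)$. If $\tau(K)<\tau$, then $G_1\to G_1/K$ has kernel of type $<\tau$, hence is an isogeny, so Corollary~\ref{roscor} makes $G_1$ (and likewise $H_1$) almost simple; both given chains then have length $2$, and $[G/G_1]=[H_1/K]=[H_1]$, $[G/H_1]=[G_1/K]=[G_1]$ show the factor multisets agree after a transposition. If $\tau(K)=\tau$, fix a chain $K_0\rhd L_1\rhd\cdots\rhd L_t=\{1\}$ from the existence part and form $A'\colon G\rhd G_1\rhd K_0\rhd L_1\rhd\cdots$ and $B'\colon G\rhd H_1\rhd K_0\rhd L_1\rhd\cdots$. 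The maps $G_1/K_0\to G_1/K$ and $H_1/K_0\to H_1/K$ have kernel $K/K_0$ of type $<\tau$, so they are isogenies and Corollary~\ref{roscor} makes $G_1/K_0$ and $H_1/K_0$ almost simple; thus $A',B'$ are valid chains. Applying the induction hypothesis to $G_1$ (the tail of $A$ versus $G_1\rhd K_0\rhd\cdots$) and to $H_1$ gives $A\sim A'$ and $B\sim B'$ factorwise, while $A'$ and $B'$ share the tail $K_0\rhd L_1\rhd\cdots$ and have top pairs $\{[G/G_1],[G_1/K_0]\}=\{[G/G_1],[G/H_1]\}$ and $\{[G/H_1],[H_1/K_0]\}=\{[G/H_1],[G/G_1]\}$, using $[G_1/K_0]=[G_1/K]=[G/H_1]$ and $[H_1/K_0]=[H_1/K]=[G/G_1]$. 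Hence $A$ and $B$ have equal length and isogenous factors up to permutation. I expect the main obstacle to be precisely this type-bookkeeping: guaranteeing that each auxiliary group from an intersection or product is again strongly connected of type $\tau$ and that the interpolated quotients $G_1/K_0,H_1/K_0$ are genuinely almost simple, which rests on Remarks~\ref{remarksolid}.3 and \ref{remarksolid}.5, Corollary~\ref{roscor}, and the isogeny invariance of $(\tau,a_\tau)$ (Remark~\ref{remarkisog}.2); the subtle point is the case $\tau(K)<\tau$, where the naive refinement through $K_0$ has the wrong type and one must instead recognize $G_1,H_1$ as already almost simple.
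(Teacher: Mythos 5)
Your proof is correct, and your existence argument is essentially the paper's (maximize $a_\tau$ among proper normal type-$\tau$ subgroups, pass to strong identity components, and use the correspondence of normal subgroups to get almost simplicity of the quotient). For uniqueness, however, you take a genuinely different route. The paper follows Lang's Schreier--Zassenhaus scheme: it forms the mutual refinements $G_{i,j}=G_{i+1}(H_j\cap G_i)$ and $H_{j,i}=H_{j+1}(G_i\cap H_j)$, invokes the butterfly lemma (valid for $\Delta$-groups because the isomorphism theorems hold) to get isomorphic refinement quotients, and then uses an $a_\tau$-counting argument together with strong connectedness and almost simplicity to show that within each block exactly one refined quotient has full type $\tau$ and that this quotient is isogenous to the original factor. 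You instead run the classical direct induction on $a_\tau(G)$: compare the second terms $G_1,H_1$, prove $G=G_1H_1$ via almost simplicity of $G/G_1$, and route both chains through the strong identity component $K_0=(G_1\cap H_1)_0$ of the intersection, splitting into the cases $\tau(G_1\cap H_1)<\tau$ (where $G_1$ and $H_1$ are themselves almost simple by Corollary~\ref{roscor}, so both chains have length two) and $\tau(G_1\cap H_1)=\tau$ (where the interpolated quotients $G_1/K_0$ and $H_1/K_0$ are almost simple because they are isogenous to $G/H_1$ and $G/G_1$ respectively). Your case split on the type of the intersection is exactly the differential-algebraic wrinkle that the paper's refinement handles instead by its collapsing argument $G_{i,0}=\cdots=G_{i,j}$. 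What each approach buys: the paper's route yields, as a by-product, the Schreier refinement statement for $\Delta$-groups (noted in the Remark following the theorem), while yours is self-contained, avoids the butterfly lemma entirely, and makes the permutation explicit --- a transposition of the top two factors composed with the permutations supplied by the inductive hypothesis.
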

\begin{proof} If $G$ is not almost simple, there exists a normal proper $\Delta$-subgroup $H$ with $\tau(H) = \tau(G) = \tau$.  Since $G$ is strongly connected, we have that $a_\tau(H) < a_\tau(G)$. Among such subgroups let $H$ have largest $a_\tau(H)$ and let $G_1$ be  the strong identity component of $H$.   Note that, by definition, $\tau(G_1) = \tau(H)$. Since $G_1$ is a characteristic subgroup of $H$, it is normal in $G$.  We have that $a_\tau(G) = a_\tau(G/G_1) +a_\tau(G_1)$ so $a_\tau(G/G_1) \neq 0$.  Therefore $\tau(G/G_1) = \tau$.  For any normal $\Delta$-subgroup $K$ of $G$ with $K \supset G_1$, maximality of $a_\tau(H) = a_\tau(G_1)$ implies that $a_\tau(K) = a_\tau(G_1)$ so $\tau(K/G_1) < \tau$.  Therefore $G/G_1$ is almost simple. The type of $G/G_1$ is $\tau$ and its typical dimension is $a_\tau (G) - a_\tau (G_1)$. Proceeding by induction on $a_\tau(G_i)$ yields the required sequence of $\Delta$--subgroups.\\[0.1in]
To prove the last sentence in the theorem we proceed as in the proof of the Jordan-H\"older Theorem as given in (\cite{LANG}, Ch.~1,\S 3). For each $i = 0, \ldots r-1, \ j = 0, \ldots , s-1$ let  
\[ G_{i,j} = G_{i+1}(H_j\cap G_i) \ . \]
The $G_{ij}$ give a refinement of the sequence of $G_i$'s:
\[G = G_{0,0} \rhd G_{0,1}\rhd \ldots \rhd G_{0,s-1}\rhd G_1 = G_{1,0}\rhd G_{1,1} \rhd \ldots \rhd \{0\}\]
Similarly, define for each $j = 0, \ldots , s-1, \ i = 0, \ldots , r$,
\[H_{j,i} = H_{j+1}(G_i\cap H_j) \ . \]
This yields a refinement of the $H_j$.  By Lemma 3.3 of \cite{LANG} we have that 
\[G_{i,j}/G_{i,j+1} \mbox{ is isomorphic to } H_{j,i}/H_{j,i+1}  \]
(the isomorphism obviously being an isomorphism of $\Delta$-$k$-groups.)  We now claim that for any $i = 0, \ldots, r-1$ there is at exactly one $j, 0\leq j \leq s-1$ such that $\tau(G_{i,j}/G_{i,j+1}) = \tau$ and that for this value of $j$ we have that $G_{i,j}/G_{i,j+1}$ is isogenous to $G_i/G_{i+1}$ (and a similar statement will hold for the sequence of $H_{j,i}$). This and the previous sentence then imply that $r=s$ and the final statement of the theorem is true.\\[0.1in]
For each $i = 0 ,\dots , r-1$,  have that 
\[a_\tau(G_i) = a_\tau(G_{i,0}/G_{i,1})+ \ldots + a_\tau(G_{i,s-1}/G_{i+1,0})+ a_\tau(G_{i+1})  \]
(recall that $G_{i+1,0} = G_{i+1}$). Since $a_\tau(G_i) >a_\tau(G_{i+1})$  we have that some \linebreak $a_\tau(G_{i,j}/G_{i,j+1})$ $ \neq 0$ and so $\tau(G_{i,j}/G_{i,j+1}) = \tau$.  Equation (\ref{eqn1}) furthermore implies that \linebreak $\tau(G_{i,j}/G_{i+1}) = \tau$.  Let $j$ be the smallest integer such that  $a_\tau(G_{i,j}/G_{i,j+1})$ $ \neq 0$. \\[0.1in]
We will show that $G_i = G_{i,0} = G_{i,1} = \ldots = G_{i,j}$. Note that $\tau = \tau(G_{i,j}/G_{i,j+1})  \leq \tau(G_{i,j}/G_{i+1}) \leq \tau(G_{i}/G_{i+1}) = \tau$.  Therefore $\tau(G_{i,j}/G_{i+1}) = \tau$.  Furthermore, $\tau = \tau(G_{i}/G_{i+1}) \geq \tau(G_{i,t}/G_{i+1}) \geq \tau(G_{i,j}/G_{i+1}) = \tau$  so $\tau(G_{i,t}/G_{i+1})  = \tau$ for any $t = 0, 1, \ldots , j-1$.  Since $a_\tau(G_{i,t}/G_{i,t+1}) =0$ for $t = 0, 1, \ldots , j-1$, we have $\tau(G_{i,t}/G_{i,t+1}) < \tau$.  Since $G_{i}/G_{i+1}$ is strongly connected, $G_{i,1}/G_{i+1} \lhd G_i/G_{i+1}$ and 
$\tau( (G_{i,0}/G_{i+1}) /(G_{i,1}/G_{i+1}) ) =\tau (G_{i,0}/G_{i,1}) < \tau$, we have $G_{i,0} = G_{i,1}$.  Continuing in a similar fashion, we see $G_{i,0} = \ldots = G_{i,j}$.\\[0.1in]
From this discussion we can conclude that $G_{i,j} = G_i$ and  $G_{i,j+1} \lhd G_i.$ Let $\pi: G_i/G_{i+1} \longrightarrow G_{i,j}/G_{i,j+1}$ be the canonical projection.    The kernel is a proper normal subgroup so, again by almost simplicity, it must have smaller type. Therefore $G_i/G_{i+1}$ is isogenous to $G_{i,j}/G_{i,j+1}$. \\[0.1in]
We now turn to the uniqueness claim. Assume that $\tau(G_{i,t}/G_{i,t+1}) = \tau$ for some $t>j$.  We would then have $\tau(G_{i,t}/G_{i+1}) = \tau$ and so $\tau(G_{i,j+1}/G_{i+1}) = \tau$. Since $G_i/G_{i+1}$ is almost simple and $G_{i,j+1}\lhd G_{i,j} = G_i$, we must have $G_{i,j+1} = G_{i,j}$. contradicting $a_\tau(G_{i,j}/G_{i,j+1}) \neq 0$.\end{proof}

%\sout{Since $G_{i}/G_{i+1}$ is almost simple, we must have that $G_{i,j}/G_{i+1} = G_i/G_{i+1}$ and therefore that the projection $\pi: G_i/G_{i+1} \longrightarrow G_{i,j}/G_{i,j+1}$ is surjective.   The kernel is a proper subgroup so, again by almost simplicity, it must have smaller type. Therefore $G_i/G_{i+1}$ is isogenous to $G_{i,j}/G_{i,j+1}$. To prove uniqueness, let $j$ be the smallest values such that $\tau(G_{i,j}/G_{i,j+1}) = \tau$ and let $j' >  j$ be such that $\tau(G_{i,j'}/G_{i,j'+1}) = \tau$. As before we would have  $\tau(G_{i,j'}/G_{i+1}) = \tau$ and so $G_{i}/G_{i+1} = G_{i,j'}/G_{i+1}$.  But this would imply that $G_{i,j} = G_{i,j'} $ and so $G_{i,j}/G_{i,j+1} = \{0\}$, contradicting the fact that $\tau \neq -1$.}\end{proof}
\begin{remarks} {\rm 1.~As is clear from  the above  proof the analogue of the results of Zassenhaus  and Schreier (cf., \cite{LANG}, Chapter I,\S3, Lemma 3.3 and Theorem3.4) stating that two normal sequences have isomorphic refinements is true in the context of $\Delta$-groups.  This follows from the fact that the  isomorphism theorems are true in this context.\\[0.1in]
2.~A parallel theorem, in which the groups in the subnormal series and the quotients are $\Delta$-$k$-groups, and the quotients are almost $k$-simple, holds for strongly connected $\Delta$-$k$-groups.}\end{remarks}
%\end{remark}
%
\begin{cor} Let $G$ and $H$ be isogenous strongly connected $\Delta$-groups with  normal sequences $G = G_0 \rhd G_1 \rhd  \ldots \rhd  G_r = \{1\}$ and $H =H_0\rhd H_1\rhd\ldots \rhd H_s = \{1\}$ as in Theorem~\ref{jhthm}. Then $r=s$ and there exists a permutation $\pi$ of $\{0,\ldots , r\}$ such that $G_{\pi(i)}/G_{\pi(i)+1}$ is $k$-isogenous to $H_i/H_{i+1}$.
\end{cor}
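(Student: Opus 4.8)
The plan is to use the subgroup correspondence of Proposition~\ref{rosprop} to carry the given normal sequence for $G$ over to a normal sequence for $H$, and then to invoke the uniqueness half of Theorem~\ref{jhthm} applied to the two normal sequences now living inside $H$. Since $G$ and $H$ are $\Delta$-$k$-isogenous, Remark~\ref{remarkisog}.2 gives $\tau(G) = \tau(H) = \tau$ and $a_\tau(G) = a_\tau(H)$, so the correspondence of Proposition~\ref{rosprop} between strongly connected $\Delta$-$k$-subgroups of type $\tau$ applies. First I would fix a defining isogeny diagram over $k$ and, for $i = 0, \ldots, r-1$, let $\tilde{G}_i \subseteq H$ be the strongly connected subgroup corresponding to $G_i$, and set $\tilde{G}_r = \{1\}$.

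Next I would check that $H = \tilde{G}_0 \rhd \tilde{G}_1 \rhd \ldots \rhd \tilde{G}_r = \{1\}$ is a normal sequence of the kind produced by Theorem~\ref{jhthm}. Each $\tilde{G}_i$ (for $i < r$) is strongly connected of type $\tau$ by construction, and the inclusions together with the normalities $\tilde{G}_{i+1} \trianglelefteq \tilde{G}_i$ follow from the inclusion- and normality-preserving property of the correspondence; the $k$-rationality of the $\tilde{G}_i$ comes from Remark~\ref{remarksolid}.2, since each is the image of a strong identity component of a $k$-subgroup. Condition (2) follows because $a_\tau(\tilde{G}_i) = a_\tau(G_i)$ (Remark~\ref{remarkisog}.2) together with the strict chain $a_\tau(G_0) > \cdots > a_\tau(G_{r-1})$ assumed for $G$. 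For condition (3), Proposition~\ref{rosprop} gives that $\tilde{G}_i/\tilde{G}_{i+1}$ is $k$-isogenous to $G_i/G_{i+1}$ for $i \le r-2$; since $G_i/G_{i+1}$ is almost simple and both quotients are strongly connected (Remark~\ref{remarksolid}.5), Corollary~\ref{roscor} forces $\tilde{G}_i/\tilde{G}_{i+1}$ to be almost simple as well.

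The step I expect to be the main obstacle is the bottom link $\tilde{G}_{r-1} \rhd \{1\}$, because the correspondence of Proposition~\ref{rosprop} is only defined on subgroups of type $\tau$, and $\{1\}$ has type $-1$. Here I would argue directly from the construction in the proof of Proposition~\ref{rosprop}: $G_{r-1}$ and $\tilde{G}_{r-1}$ are the images, under two isogenies, of a common strongly connected $\Delta$-$k$-subgroup $P$ of the apex of the isogeny diagram, so $G_{r-1} = G_{r-1}/\{1\}$ is $k$-isogenous to $\tilde{G}_{r-1} = \tilde{G}_{r-1}/\{1\}$; this simultaneously supplies properness of the last inclusion (as $\tilde{G}_{r-1}$ has type $\tau \neq -1$) and almost simplicity of the last quotient via Corollary~\ref{roscor}. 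With the transported sequence verified, the uniqueness assertion of Theorem~\ref{jhthm} applied to the two normal sequences $H = H_0 \rhd \cdots \rhd H_s = \{1\}$ and $H = \tilde{G}_0 \rhd \cdots \rhd \tilde{G}_r = \{1\}$ yields $r = s$ and a permutation $\pi$ with $\tilde{G}_{\pi(i)}/\tilde{G}_{\pi(i)+1}$ being $k$-isogenous to $H_i/H_{i+1}$. Finally, composing with the $k$-isogenies between $\tilde{G}_{\pi(i)}/\tilde{G}_{\pi(i)+1}$ and $G_{\pi(i)}/G_{\pi(i)+1}$ and using that isogeny is transitive (Proposition~\ref{rosprop}) gives the desired $k$-isogeny between $G_{\pi(i)}/G_{\pi(i)+1}$ and $H_i/H_{i+1}$.
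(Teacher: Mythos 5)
Your proof is correct and follows exactly the route the paper intends: the paper's entire proof reads ``Immediate from Theorem~\ref{jhthm} and Proposition~\ref{rosprop},'' and your argument---transporting the normal sequence of $G$ into $H$ via the subgroup correspondence of Proposition~\ref{rosprop}, then invoking the uniqueness half of Theorem~\ref{jhthm} for the two sequences inside $H$ and composing isogenies---is precisely the elaboration of that citation. Your careful handling of the bottom step $\tilde{G}_{r-1} \rhd \{1\}$ (which lies outside the type-$\tau$ correspondence) correctly fills in a detail the paper leaves implicit.
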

\begin{proof} Immediate from Theorem~\ref{jhthm} and Proposition~\ref{rosprop}. \end{proof}
\begin{remarks} {\rm 1. One can consider groups $G$ that are not strongly connected and apply Theorem~\ref{jhthm} first to the strong identity component $G_0$, then  to the strong identity component of $G/G_0$, etc.  This  will yield a  more complete Jordan-H\"older Theorem. \\[0.1in] 
 2.  In Example~\ref{cartanex} below, we show that the uniqueness up to isogeny in Theorem~\ref{jhthm} cannot be strengthened to give uniqueness up to isomorphism.\\[0.1in]
3. In \cite{Baud90}, Baudisch presents a Jordan-H\"older Theorem for superstable groups $G$.  He shows that such a group has a sequence of definable subgroups  $(1) \lhd H_0 \lhd H_1 \lhd \ldots \lhd H_r \lhd G$ such that $H_{i+1}/H_i$ is infinite and either abelian or simple modulo a finite center and $H_r$ is of finite index in $G$.  Although this is of a similar nature as our result, his result does not address the structure of abelian groups  nor does he address the question of uniqueness. A key step in understanding the relationship between Baudisch's result and ours would be to understand the relationship between U-rank and the type and typical differential transcendence degree (cf., \cite{suer07}).}\end{remarks}

\begin{ex} \label{example213} {\rm  We now return to the example of Landau mentioned in the Introduction:
\begin{eqnarray} L &=& \dd_x^3  + x\dd_x^2\dd_y + 2\dd_x^2 + 2(x+1)\dd_x\dd_y +\dd_x +(x+2)\dd_y\label{blumex}\\
& = & (\dd_x+1)(\dd_x+1)(\dd_x +x\dd_y) \label{factor1} \\ 
 & = & (\dd_x^2 + x\dd_x\dd_y + \dd_x + (x+2)\dd_y)(\dd_x+1) \label{factor2}\ .
 \end{eqnarray}   We shall show how this example fits into the above discussion. \\[0.1in]
 Let $\Delta = \{\dd_x, \dd_y\}$ and $k$ a differentially closed $\Delta$-field. Recall from Section~\ref{subsec2.1} that  the gauge of a $\Delta$-$k$-variety $X$ is the pair $(\tau(X), a_\tau(X))$ and order these lexicographically. Let $R$ be any nonzero homogeneous linear differential polynomial  in $\dd_x$ and $\dd_y$ of order $d$ and let $G_R \subset k$ be the solutions of $R=0$.  In \cite{DAAG}, Kolchin gives a general method for computing differential dimension polynomials $\omega_{\eta/k}(s)$ in terms of a characteristic set for the defining ideal $I$ of $\eta$ over $k$. When this ideal is generated by linear homogeneous differential polynomials $\{L_1(z), \ldots L_r(z)\}$  in one variable, one does not need this full machinery.  In this restricted case, if ${G}_1, \ldots  , {G}_s$ is a  reduced Gr\"obner basis (with respect to a graded monomial order) of the left ideal $<L_1, \ldots , L_r>$ in $k[\dd_1, \ldots ,\dd_m]$, then ${G}_1(z), \ldots  , {G}_s(z)$ is a characteristic set of $I$\footnote{In \cite{insapauer,pauer07}, the authors define and prove the existence of Gr\"obner bases for a wide class of rings. For left ideals  $J$ in $k[\dd_1, \ldots ,\dd_m]$ their definition reduces to a set $\{G_1, \ldots , G_m\} \subset J$ such that the leading term of each element of $J$ be divisible by the leading term of some $G_i$. This guarantees that  $\{G_1(z), \ldots , G_m(z)\}$ is coherent. If $\{G_1, \ldots, G_m\}$ is reduced in the usual sense of Gr\"obner bases, then $\{G_1(z), \ldots , G_m(z)\}$ is autoreduced. Using the criterion of (\cite{DAAG}, Lemma 2, p.167 or \cite{sit2000}, Ex. 8.17) and the fact that the condition of  \cite{insapauer,pauer07} guarantees that  $\{G_1(z), \ldots, G_m(z)\}$ generates  $I \cap k\{z\}_1$ where $I$ is the differential ideal generated by the $L_i(z)$ and $k\{z\}_1$ is the set of homogeneous linear differential polynomials, one can see that $\{G_1(z), \ldots , G_m(z)\}$ is a characteristic set of $I$. We thank William Sit for these references and explanations.}.
  The differential dimension polynomial can be computed from the leading terms of this set (with respect to the same monomial order, which is orderly; see Lemma 16, Ch.~0.17 and Theorem 6, Ch.~II.12 of \cite{DAAG}).  For $\Delta = \{\dd_x, \dd_y\}$ and $k$ a differentially closed $\Delta$-field, this can be made explicit. Identifying the monomial $\dd_x^i\dd _y^j$, we use the order $(i,j) > (i',j')$ if $i+j > i'+j'$ or $i+j = i'+j'$ and $i>i'$. If $ E= \{(i_1,j_1), \ldots, (i_t,j_t)\}$ represent the leading terms of a reduced Groebner basis of a left ideal of $k[\delta_x,\delta_y]$, we can assume that $i_1< i_2< \ldots <i_t$ and $j_1>j_2 > \ldots >j_t$. Let \[W = \{(i,j) \in \NX^2\ | \  i \geq i_1, j \geq j_t, (i,j) \notin   \cup_{(i',j') \in E}
 ((i',j') + \NX^2) \}\] and let $d = i_1+j_t$.  Geometrically, $W$ is the set of points in the quarter-plane $(i_1,j_t) + \NX^2$ below the ``stairs'' $\cup_{(i',j') \in E}
 ((i',j') + \NX^2)$.  Lemma 16, Ch.~0.17 and Theorem 6, Ch.~II.12 of \cite{DAAG}, imply that 
 \[ \omega(s) = \binom{s+2}{2} - \binom{s-d+2}{2} +|W| = ds +\frac{3d-d^2}{2} + |W|.\] 
\noindent For example, if the leading monomial of an operator $R$ is $\dd_x^i\dd _y^j$ with $i+j = d$, then the differential dimension polynomial of the differential ideal $[R(z)]$ generated by $R(z)$ is $\omega(s) = \binom{s+2}{2} - \binom{s-d+2}{2} = ds +\frac{3d-d^2}{2}$. Therefore the gauge of $G_R$ is $(1,d)$.  If $H \subset G_R$ is a  proper $\Delta$-subgroup, then the defining differential ideal $J$ contains a linear differential polynomial such that the leading term of its associated operator is not divisible by the leading term of $R$.  The stairs corresponding to leading terms of a characteristic set of the ideal $J$  strictly contain $(1,d)+\ZX^2$. One sees from this that the associated differential dimension polynomial either has degree $0$ or leading coefficient less than $d$. This implies that either the type of $H$ is smaller than  the type of $G_R$
 or the typical transcendence degree of $H$ is smaller than $d$. In particular, this implies that $G_R$ is strongly connected.\\[0.1in]
 The factorization in (\ref{factor1}) yields a sequence of subgroups of $G_L$ as follows.  Let $G_1$ be the solutions of $(\dd_x+1)(\dd_x +x\dd_y)(z) = 0$ and let $G_2$ be the solutions of $(\dd_x +x\dd_y)(z) = 0$. As noted above, each of these groups is strongly connected.  The quotients $G/G_1$  and $G_1/G_2$ are both isomorphic to the group defined by $(\dd_x+1)(z) = 0$. Therefore each quotient in the sequences  $G \supset G_1 \supset G_2 \supset \{0\}$ is strongly connected and has gauge $(1,1)$. { Let $H$ be one of these quotients and $K$ a proper subgroup of $H$.  If the gauge of $H$ is (1,1), then $\tau(K/H) = 0$, contradicting the fact that $H$ is strongly connected. Therefore $a_1(K) = 0$ and so $\tau(K) = 0$. }  Therefore these quotients are almost simple and so   the sequence satisfies the conclusion of 
 .\\[0.1in]
 The factorization in (\ref{factor2}) yields a sequence of groups as follows. Let $H_2$ be the group defined by $(\dd_x+1)(z) = 0$.  We then have the sequence $G \supset H_2\supset \{0\}$. The Theorem implies that the group $G/H_2$ cannot be almost simple.  In fact the proof of the theorem tells us how to refine this sequence.  Let $H_1 = H_2+G_2$ (note that this sum is  direct since the groups have a trivial intersection).  { Lemma~\ref{stronglem} implies this group is strongly connected.} The ideal defining this group is the intersection of the ideals defining each of the summands, i.e. 
\[ \langle (\dd_x+1)(z)\rangle \cap \langle (\dd_x +x\dd_y)(z) \rangle =  \langle L_1(z), L_2(z)\rangle\]
 where 
 \begin{eqnarray*}
L_1 & = &x{{\dd_x}}^{2}{\dd_y}+{x}^{2}{\dd_x}\,{{\dd_y}}^{2}-{{\dd_x}}^{2}
-{\dd_x}\,{\dd_y}+{x}^{2}{{\dd_y}}^{2}-{\dd_x}-{\dd_y}-x{\dd_y}\\
L_2 & = & {{\dd_x}}^{3}-{x}^{2}{\dd_x}\,{{\dd_y}}^{2}+3\,{{\dd_x}}^{2}+2\,x{
\dd_x}\,{\dd_y}+3\,{\dd_x}\,{\dd_y}-{x}^{2}{{\dd_y}}^{2}+2\,{\dd_x}+2\,x{\dd_y}+3\,{\dd_y}
\end{eqnarray*}
 form a Gr\"obner basis (with respect to an orderly ranking with $\dd_x > \dd_y$) of this left ideal (cf., Example 5 of \cite{grig_schwarz_04}. The above polynomials were calculated using the {\tt Ore\_algebra} and {\tt Groebner} packages in Maple and  differ slightly from those presented in \cite{grig_schwarz_04}). { One can use this representation to see directly that  $H_1$ is strongly connected.}  Looking at leading terms and using Lemma 16 of Ch.~0.17 of \cite{DAAG}, we see that the gauge is $(1,2)$. Furthermore, we see that if this ideal is contained in a larger ideal $J$ of the same gauge, then $J$ must be generated by a single operator of order 2. Using the techniques of \cite{grig_schwarz_04} one can show that  $L_1$ and $L_2$ do not have a common right factor of order 2 so  this is impossible.  Therefore $H_1$ is strongly connected of gauge $(1,2)$.  This yields the sequence $G \supset H_1 \supset H_2 \supset \{0\}$ where successive quotients are almost simple. We will now compare successive quotients of the two sequences: \\[0.1in]
 $\underline{H_2:}$ We will construct an isogeny between $G/G_1$ and $H_2$.  Consider the operator $T = (\dd_x+1)(\dd_x+x\dd_y)$.  This operator maps $G$ onto $G/G_1$, which is naturally isomorphic to the solution space of $(\dd_x+1)(z) = 0$, an almost simple (and therefore strongly connected) group. The group $T(H_2)$ must therefore be either all of $G/G_1$ or of rank less than $1$ (and therefore $\{0\}$ since $H_2$ is strongly connected).  If $T(H_2)=0$, then $H_2$ would be annihilated by both  $(\dd_x+1)(\dd_x+x\dd_y)$ and $(\dd_x+1)$, but this is impossible since the ideal $\langle (\dd_x+1)(\dd_x+x\dd_y), (\dd_x+1)\rangle$ contains $\dd_y$ and so is of rank $0$.  Therefore $T(H_2) = G/G_1$ and $T$ defines an isogeny.  In fact, since  $H_2 = \{fe^{-x} \ | f_x = 0\}$ one can show that $T$ is an isomorphism of $H_2$ onto $G/G_1$\\[0.1in]
 $\underline{H_1/H_2:}$ This group is $(H_2+G_2)/H_2$ which is isomorphic to $G_2/(H_2\cap G_2)$. The group $H_2\cap G_2$ is precisely the common zeroes of $(\dd_x+1)(z) = 0$ and $(\dd_x +x\dd_y)(z) = 0$, that is, just the element $0$.  Therefore   $G_2$ is isogenous (in fact, isomorphic) to $H_1/H_2$ .\\[0.1in]
 $\underline{G/H_1:}$ We will show that $G/H_1$ is isogenous to $G_1/G_2$. 
 To see this, consider the map $G_1 \hookrightarrow G \rightarrow G/H_1$. The kernel of this map is $G_1 \cap H_1$, that is those solutions of $(\dd_x+1)(\dd_x +x\dd_y)(z) =0$ that are the sum of a solution of  $(\dd_x+1)(z) = 0$ and  a solution of $(\dd_x +x\dd_y)(z) = 0$. This clearly contains $G_2$ but is not all of $G_1$, so we have a homomorphism $G_1/G_2 \rightarrow G/H_1$, that is not the zero  map.  Since $G_1/G_2$ is almost simple, the image of this map has gauge $(1,1)$ and the kernel has rank at most $0$. Since $G/H_1$ is almost simple, the image must be all of $G/H_1$. Therefore we have an isogeny of $G_1/G_2$ onto $G/H_1$. This isogeny has a nontrivial kernel; it is the solutions of $(\dd_x+1)(z) = 0, \dd_y(z) = 0$.   Although {\it this isogeny} has a nontrivial kernel, we claim that  $G_1/G_2$ is nonetheless isomorphic to $G/H_1$.  To show this it is enough to show the following.  Let $K_1$ be the solutions of $(\dd_x+1)(z) = 0$ and $K_2$ be the solutions of $(\dd_x+1)(z) = 0, \dd_y(z) = 0$. We have that $G_1/G_2$ is isomorphic to $K_1$ and $G/H_1$ is isomorphic (by the above) to $K_1/K_2$.  The homomorphism $\dd_y:\Ga \rightarrow \Ga$ maps $K_1$ surjectively to $K_1$  and its kernel in $K_1$ is $K_2$.  Therefore $K_1/K_2$ is isomorphic to $K_1$ and our claim is proved.
 } \end{ex}
 
 \begin{ex}\label{cartanex} {\rm We shall use the groups defined in Example~\ref{cartanexple} to construct an example of a $\Delta$-group having two normal sequences with isogenous but nonisomorphic quotients.  \\[0.1in]
 Let $\Delta = \{\dd_x, \dd_t\}$ and  $k = \frakU$ and $C$ be the $\Delta$-constants of $\frakU$.  Let $K_1$ be the solutions of $(\dd_x^2 - \dd_t)(z) = 0$ and $K_2$ be the solutions of $(\dd_x-x\dd_t)(z) = 0$.  Let $G = K_1 + K_2$. {  Lemma~\ref{stronglem} implies that this group is strongly connected.}  In fact its annihilating ideal is the intersection of $<\dd_x^2 - \dd_t>$ and $<\dd_x-x\dd_t>$ in $k\{\dd_x,\dd_t\}$ which is $<L>$ where\begin{eqnarray}
L & = & x\dd_x^3 - x^2\dd_x^2\dd_t - 2\dd_x^2 -x\dd_x\dd_t + x^2 \dd_t^2 +2\dd_t \nonumber \\
 & = &( x\dd_x - x^2 \dd_t -2)(\dd_x^2 - \dd_t) \nonumber\\
 & = & (x\dd_x^2 - x\dd_t - 2 \dd_x)(\dd_x - x \dd_t).\label{cartanexeq1}
 \end{eqnarray}
{ One sees directly that }$G$ is strongly connected since a subgroup defined by the vanishing of a single operator must be strongly connected. Consider the two sequences of subgroups: $G \supset K_1 \supset \{0\}$ and $G \supset K_2 \supset \{0\}$.  We claim that each quotient in either of these sequences is almost simple and, while $K_1$ is isogenous to $G/K_2$, it is not isomorphic to either $G/K_2$ or $K_2$.\\[0.1in]
 The almost simplicity of $K_1$ and $K_2$ follows from the observation in Example~\ref{evolex}.  In particular any quotient of $K_1$ or $K_2$ by a $\Delta$-subgroup  of smaller type will also be almost simple.  Therefore $G/K_1 \simeq K_2/(K_1 \cap K_2)$ and  $G/K_2 \simeq K_1/(K_1 \cap K_2)$ are almost simple. Clearly $K_1$ is isogenous to $G/K_2$ and $K_2$ is isogenous to $G/K_1$.\\[0.1in]
We now claim that $K_1$  is not isomorphic to either $G/K_2$ or $K_2$.  Note that $K_2$ has typical differential transcendence degree 2 while $K_2$ has  typical differential transcendence degree 1, so these two groups cannot be isogenous.  From (\ref{cartanexeq1}), one sees that $G/K_2$ is isomorphic to the group of solutions of $(x\dd_x^2 - x\dd_t - 2 \dd_x)z=0$.  This is the group $H_x$ of Example~\ref{cartanexple} and $K_1$ is the group $H$ of the Heat Equation in this latter example.  As we have shown in Example~\ref{cartanexple}  these groups are not isomorphic.}\end{ex}
\section{Almost Simple $\Delta$-Groups}\label{section_asg} In this section we will develop some facts concerning almost simple $\Delta$-groups.   We will use the terms $\Delta$-group, $\Delta$-subgroup, etc. to mean that the objects are all defined over $\frakU$.  Similarly almost simple, irreducible, etc.~refer to these properties with respect to $\frakU$, a differentially closed $\Delta$-field.  
%\subsection{Non-commutative Almost Simple Groups}  
\subsection{Quasisimple Linear Algebraic Groups are Almost Simple $\Delta$-groups}  We begin with the following lemma. 
\begin{lem}\label{lemnorm} Let $(\frakU, \Delta)$ be a differentially closed differential field, $\Delta' \subset \Delta$ and $G$ a non-commutative  connected algebraic subgroup of $\GL_n(\frakU)$ with finite center, that is defined over the field $C'$ of $\Delta'$-constants of $\frakU$. Then the $\Delta'$-subgroup $G(C')$ of $G$ equals its own normalizer in $G$.
\end{lem}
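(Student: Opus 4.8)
The plan is to establish the nontrivial inclusion $N_G(G(C'))\subseteq G(C')$, the reverse inclusion being automatic since $G(C')$ is itself a subgroup. Throughout I identify $G$ with $G(\frakU)$ and write $\frakg\subseteq M_n(\frakU)$ for its Lie algebra, which is defined over $C'$. Two preliminary facts will be used. First, since $\frakU$ is differentially closed, the $\Delta'$-constant field $C'$ is algebraically closed, and as $G$ is connected and defined over $C'$, the subgroup $G(C')$ is Zariski dense in $G$. Second, for each $\delta\in\Delta'$ and each $g\in G$ one has $g^{-1}\delta(g)\in\frakg$: differentiating the defining equations of $G$ (whose coefficients lie in $C'$ and are therefore killed by $\delta$) shows that $\delta(g)\in T_gG=g\,\frakg$.

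The central computation is as follows. Fix $g\in N_G(G(C'))$ and $\delta\in\Delta'$, and set $u_\delta=g^{-1}\delta(g)\in\frakg$. For any $a\in G(C')$ we have $gag^{-1}\in G(C')$, hence $\delta(gag^{-1})=0$. Using $\delta(a)=0$ and $\delta(g^{-1})=-g^{-1}\delta(g)g^{-1}$, expanding gives
\[
0=\delta(gag^{-1})=\bigl(\delta(g)\,a-gag^{-1}\delta(g)\bigr)g^{-1}.
\]
Multiplying on the right by $g$ and then on the left by $g^{-1}$ yields $u_\delta a=a u_\delta$. Thus $u_\delta$ commutes with every element of $G(C')$.

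Because $G(C')$ is Zariski dense and the condition $u_\delta a=au_\delta$ is Zariski closed in $a$, the element $u_\delta$ commutes with all of $G$; equivalently $\Ad(a)u_\delta=u_\delta$ for every $a\in G$, so $u_\delta$ lies in the space $\frakg^G$ of adjoint invariants. For connected $G$ this space equals the center $\mathfrak{z}(\frakg)$ of the Lie algebra. The decisive step, and the one I expect to demand the most care, is the identification $\mathfrak{z}(\frakg)=0$: in characteristic zero one has $\ker(\Ad)=Z(G)$ for connected $G$, whence $\mathfrak{z}(\frakg)=\ker(\ad)=\mathrm{Lie}(\ker\Ad)=\mathrm{Lie}(Z(G))$, and this vanishes because $Z(G)$ is finite. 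Here the finiteness of the center is precisely what is needed; since a connected commutative group with finite center is trivial, the non-commutativity hypothesis is consistent with, and slightly stronger than, what the argument actually uses.

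It follows that $u_\delta=0$, that is $\delta(g)=0$. As $\delta\in\Delta'$ was arbitrary, $g$ is a $\Delta'$-constant, so $g\in G(C')$, which completes the proof. The only places where genuine input beyond bookkeeping is required are the density of $G(C')$ (from differential closedness giving $C'$ algebraically closed) and the characteristic-zero fact $\mathfrak{z}(\frakg)=\mathrm{Lie}(Z(G))$; everything else is a direct derivation computation and a Zariski-closure argument.
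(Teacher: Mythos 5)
Your proof is correct and follows essentially the same route as the paper's: differentiate the conjugation relation to show the logarithmic derivative $g^{-1}\delta(g)$ commutes with the Zariski-dense subgroup $G(C')$, conclude it lies in the center of $\frakg$, and kill it using $\mathfrak{z}(\frakg)=\mathrm{Lie}(Z(G))=0$ from finiteness of the center. The only differences are cosmetic: you use the left rather than the right logarithmic derivative, and you derive the standard Lie-theoretic identifications directly where the paper cites Hochschild.
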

\begin{proof} First note that since $C'$ is algebraically closed, $G(C')$ is Zariski dense in $G$ and is non-commutative and connected.  Let $N$ be the normalizer of $G(C')$ in $G$. For $g \in N$ and $c \in G(C')$, we have that $g^{-1} c g = c_1 \in G(C')$. For any $\delta' \in \Delta'$, differentiating this latter equation yields $-g^{-1}\delta'g\cdot g^{-1} cg + g^{-1}c\delta'g = 0$ and therefore 
\[ c^{-1}\ell\delta'g \ c = \ell\delta'g\]
where $\ell\delta'g = \delta'g \cdot g^{-1}$ is called the logarithmic derivative of $g$.  It is well known that $\ell\delta'g \in\frakg,$ the lie algebra of $G$ (see \cite{DAAG}, Ch.~V.22 or \cite{MiSi2002}, Sec.5, for an exposition of properties of the logarithmic derivative in the linear case). Since $G(C')$ is Zariski dense in $G$, this latter equation implies that every element of the $\frakU$-span $W$ of $\ell\delta'g$ is  invariant under the adjoint action of $G$ on $\frakg$.  Corollary 3.2 of \cite{hochschild} implies that $W$ is annihilated by the adjoint action of $\frakg$ on $\frakg$, that is $W$ lies in the center of $\frakg$.  Theorems 3.2 and 4.2 in Chapter 4  of \cite{hochschild} imply that the center of $\frakg$ is the lie algebra of the center of $G$.  Since the center of $G$ is finite, we must have that $\ell\delta'g = 0$, that is $\delta'g  = 0$ and so $g \in G(C')$.\end{proof}
\begin{prop} Let $(\frakU, \Delta)$ be a differentially closed differential field with $\Delta$-constants $C$ and let $G \subset \GL_n(\frakU)$ be a non-commutative quasisimple linear algebraic group defined over $ C$. If $N$ is a proper normal $\Delta$-subgroup of $G$, then $N$ is finite.  In particular, if $G$ is quasisimple (resp., simple) as an algebraic group, then it is quasisimple (resp., simple) as a $\Delta$-group and therefore in either case is almost simple.
\end{prop}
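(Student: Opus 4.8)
The plan is to reduce the problem to the case where $N$ is Zariski dense in $G$ and then to kill that case with the logarithmic-derivative technique already used in Lemma~\ref{lemnorm}. First I would pass to the Zariski closure $\overline{N}$ of $N$ in $G$. Since $N$ is normalized by $G$ and $G$ is Zariski dense in itself, conjugation (a Zariski homeomorphism) shows $\overline{N}$ is a normal algebraic subgroup of $G$. Because $G$ is quasisimple as an algebraic group, $G/Z(G)$ is simple and $Z(G)$ is finite, so pulling back shows every normal algebraic subgroup of $G$ is either finite (and contained in $Z(G)$) or all of $G$ --- the latter because a closed subgroup of finite index in the connected group $G$ must be $G$ itself. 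If $\overline{N}$ is finite then $N\subseteq\overline{N}$ is finite and we are done; thus the entire content is to exclude a proper $N$ that is Zariski dense in $G$.

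For the Zariski-dense case I would appeal to the differential-algebraic structure of such subgroups (Cassidy, \cite{cassidy1,cassidy2}): a Zariski-dense $\Delta$-subgroup is cut out inside $G$ by conditions requiring the logarithmic derivatives $\ell\delta_i g=\delta_i g\cdot g^{-1}$ to lie in prescribed subspaces of $\frakg$. Normality of $N$ in $G$ should translate, via the cocycle behaviour of $\ell\delta_i$ under conjugation and Zariski density, into $\Ad(G)$-invariance of these subspaces; since $G$ is noncommutative simple the Lie algebra $\frakg$ is simple, so its only $\Ad(G)$-invariant subspaces (ideals) are $0$ and $\frakg$. If each constraint is the full $\frakg$ there is no condition and $N=G$, contradicting properness. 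If instead some subspace is $0$, then every $g\in N$ satisfies $\delta_i g=0$, so $N\subseteq G(C_i)$ with $C_i=\{u\in\frakU:\delta_i u=0\}$. Here I would rerun the computation in the proof of Lemma~\ref{lemnorm}: from $\delta_i(xnx^{-1})=0$ for all $x\in G$ and $n\in N$ one finds that $x^{-1}\delta_i x$ commutes with the Zariski-dense set $N$, hence lies in the center of $\frakg$, hence vanishes by finiteness of the center; this would force $\delta_i$ to annihilate all of $G(\frakU)$, which is absurd. This contradiction eliminates the $0$ alternative, so no proper Zariski-dense normal $\Delta$-subgroup exists and $N$ is necessarily finite.

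Finally I would record the ``in particular'' statements. A finite normal $\Delta$-subgroup of the connected group $G$ is central, since for fixed $n$ the map $g\mapsto gng^{-1}$ is a morphism from the connected $G$ into the finite set $N$ and is therefore constant; hence every proper normal $\Delta$-subgroup lies in $Z(G)$. If $G$ is simple as an algebraic group then $Z(G)=\{1\}$, so any such $N$ is trivial and $G$ is simple as a $\Delta$-group. If $G$ is quasisimple, then any proper normal $\Delta$-subgroup of $G/Z(G)$ pulls back to a proper --- hence finite, hence central --- normal $\Delta$-subgroup of $G$, forcing it to be trivial; as $Z(G)$ is finite, $G$ is quasisimple as a $\Delta$-group. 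In either case the proper normal $\Delta$-subgroups are finite and therefore of strictly smaller differential type than $\tau(G)$ (the full $\frakU$-points of a positive-dimensional group have type $|\Delta|\ge 1$, whereas a finite set has type at most $0$), so $G$ is almost simple.

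The main obstacle is squarely the Zariski-dense case: the Zariski-closure reduction and the ``in particular'' deductions are routine, but excluding a proper Zariski-dense normal $N$ genuinely requires the structure theory that encodes such subgroups through their logarithmic derivatives, together with the self-normalizing input of Lemma~\ref{lemnorm}. The delicate step is to justify cleanly that normality forces the defining $\frakg$-data to be $\Ad(G)$-stable, and that the vanishing alternative collapses through the logarithmic-derivative computation; once simplicity of $\frakg$ is brought to bear, the argument closes.
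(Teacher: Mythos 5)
Your proposal follows the same skeleton as the paper's proof --- reduce via Zariski closures and quasisimplicity of $G$ as an algebraic group to the case of a Zariski-dense normal $\Delta$-subgroup, then kill that case with the logarithmic-derivative/centralizer computation of Lemma~\ref{lemnorm} --- and your reduction step and ``in particular'' deductions are essentially correct. But there is a genuine gap exactly where you yourself locate the ``main obstacle.'' The structural claim you lean on --- that a Zariski-dense $\Delta$-subgroup of $G$ is cut out by conditions of the form ``$\ell\delta_i g$ lies in a prescribed subspace $W_i\subseteq\frakg$'' for the given derivations $\delta_i\in\Delta$, with normality forcing each $W_i$ to be $\Ad(G)$-invariant --- is not a theorem of \cite{cassidy1,cassidy2}, and it is false as stated. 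The correct statement is Cassidy's classification (Theorem 19 of \cite{cassidy6}, quoted as Proposition~\ref{appendixprop2}): a \emph{connected} Zariski-dense proper $\Delta$-subgroup of a quasisimple $G$ defined over constants is a \emph{conjugate} $g\,G(C')\,g^{-1}$, where $C'$ is the constant field of a commuting independent set $\Delta'$ of derivations lying in the $\frakU$-\emph{span} of $\Delta$. Such a subgroup is defined by coupled, conjugated conditions such as $\ell\delta_1(g^{-1}xg)+a\,\ell\delta_2(g^{-1}xg)=0$ with $a\in\frakU$ possibly non-constant; it does not fit your dichotomy ``each $W_i$ is $0$ or $\frakg$,'' so your case analysis simply never sees these groups. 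This classification is the deep input here --- it is precisely the ``one notable exception'' flagged in Section~\ref{section_jht} as unavailable in \cite{cassidy1,cassidy2} --- and asserting that a simplified version of it ``should'' follow from normality is assuming away the crux of the proof.

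Your second case ($N\subseteq G(C_i)$ for some $\delta_i\in\Delta$) is computed correctly and is indeed the computation of Lemma~\ref{lemnorm} adapted to normality, but on its own it only excludes normal Zariski-dense subgroups consisting of $\delta_i$-constant points; it says nothing about conjugates, nor about groups built from combinations $\delta'=a_1\delta_1+\cdots+a_m\delta_m$ with non-constant coefficients $a_j$. The paper's actual route is: pass to the Kolchin identity component $N^0$ (connectedness is a hypothesis of Theorem 19 --- a point your version skips, since you take the closure of $N$ rather than of $N^0$); conclude from Theorem 19 that $N^0$ is conjugate to some $G(C')$, enlarging $\Delta$ so that $\Delta'\subset\Delta$; then Lemma~\ref{lemnorm} says $G(C')$, hence its conjugate $N^0$, is its own normalizer in $G$, which contradicts normality of the proper subgroup $N^0$ outright --- no $\Ad(G)$-invariance of ``defining data'' is ever needed. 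To repair your write-up, replace your structural claim by an appeal to Proposition~\ref{appendixprop2} and run this self-normalizer argument; the remainder of your proof then goes through.
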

\begin{proof} Let $N$ be a proper normal $\Delta$-subgroup of $G$ and $N^0$ its identity component (in the Kolchin topology).  $N^0$ is again normal in $G$. Let $H \subset G$ be the Zarski closure of $N^0$. One sees that $H$ is connected in the Zariski topology and is again normal in $G$. We wish to show that $H$ is finite. If not, then, since $G$ is quasisimple as an algebraic group, we must have that $H = G$.  This implies that $N^0$ is a  Zariski dense $\Delta$-subgroup of $G$, connected in the Kolchin topology.  Theorem 19  of \cite{cassidy6} states that there exists a finite set of derivations $\Delta'$ that are $\frakU$-linear combinations of the elements of $\Delta$ such that $N^0$ is conjugate in $\GL_n(\frakU)$ to $G(C')$ where $C' = \{c \in \frakU \ | \ \delta'(c) = 0, \ \forall \delta'\in \Delta'\}$.  Expanding $\Delta$ if necessary we may assume $\Delta' \subset \Delta$ (this does not change the hypotheses or conclusions). Lemma~\ref{lemnorm} implies that $N^0$ is its own normalizer so $N^0 = G$, contradicting the fact that $N$ is a proper subgroup of $G$.\end{proof}
\subsection{Almost Simple Linear $\Delta$-Groups} We now restrict ourselves to {\it linear} differential algebraic groups.  In this situation , we are able to derive additional properties of almost simple groups and, in the case of ordinary differential fields, give a strong classification of almost simple linear $\Delta$-groups. 
\begin{prop}\label{comsimpleprop} Let $G$ be a  nontrivial commutative simple  linear $\Delta$-group.  Then either $G$  is finite of prime order or is isomorphic to $\Ga(C)$.
\end{prop}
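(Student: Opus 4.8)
The plan is to dispose of the finite case at once and, in the infinite case, to show that the Zariski closure of $G$ must be a vector group, from which $\Ga(C)$ falls out. First I would observe that if $G$ is finite then every subgroup is automatically a $\Delta$-subgroup (a finite subset is $\Delta$-closed), so a simple $G$ is a finite abelian group with no proper nontrivial subgroup, i.e.\ cyclic of prime order. So assume $G$ is infinite. Its identity component $G^0$ has finite index, hence is infinite and nontrivial, and it is characteristic and so normal; simplicity forces $G=G^0$, so $G$ is connected. Write $\overline{G}$ for its Zariski closure, a connected commutative linear algebraic group, which decomposes canonically as $\overline{G}=T\times V$ with $T$ a torus and $V$ a unipotent (vector) group. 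The key preliminary point is that $G$ meets every proper algebraic subgroup $A$ of $\overline{G}$ trivially: $A\cap G$ is a $\Delta$-subgroup of $G$, and it cannot equal $G$ (else $\overline{G}\subseteq A$), so by simplicity $A\cap G=\{1\}$. Consequently, if both $T$ and $V$ were nontrivial they would be proper algebraic subgroups, so the projections $G\to V$ and $G\to T$ would have kernels $G\cap T=\{1\}$ and $G\cap V=\{1\}$ and hence be injective $\Delta$-homomorphisms; $G$ would then be $\Delta$-isomorphic simultaneously to a unipotent group and to a group of multiplicative type. Since being unipotent (respectively of multiplicative type) is preserved under $\Delta$-isomorphism (\cite{cassidy1}, Proposition 35) and a nontrivial group cannot be both, this is impossible, so $\overline{G}$ is either a torus or a vector group.

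Next I would rule out the torus case. Suppose $\overline{G}=T\simeq\Gm^r$ with $r\geq 1$. For each $i$ the logarithmic derivative $\ell_{\delta_i}\colon T\to\frakU^{r}$ is a $\Delta$-homomorphism whose kernel is the $\Delta$-subgroup of $\delta_i$-constant points of $T$; restricting to $G$, the kernel $G\cap\ker\ell_{\delta_i}$ is $\{1\}$ or $G$ by simplicity. If it is $\{1\}$ for some $i$, then $\ell_{\delta_i}$ embeds $G$ into the vector group $\frakU^{r}$, making the multiplicative-type group $G$ $\Delta$-isomorphic to a unipotent one, which is impossible as above. Otherwise every $\delta_i$ annihilates $G$, so $G\subseteq T(C)=(C^{*})^{r}$; here the $\Delta$-subgroup $G$ is merely a Zariski-closed subgroup of $(C^{*})^{r}$, and since it is connected with Zariski closure all of $T$ it must equal $T(C)$, which contains the finite subgroup $\mu_p^{r}$ of $p$-th roots of unity --- contradicting that $G$ meets proper algebraic subgroups trivially. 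So the torus case cannot occur.

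Therefore $\overline{G}=V$, i.e.\ $G$ is a connected $\Delta$-subgroup of $\Ga^{n}$. By Cassidy's description (\cite{cassidy1}) of the $\Delta$-subgroups of $\Ga^{n}$ as solution sets of systems of homogeneous linear $\Delta$-polynomials, and because scalars from $C$ commute with all the operators involved, $G$ is closed under multiplication by $C$, i.e.\ it is a $C$-subspace of $\frakU^{n}$. Choosing any nonzero $w_0\in G$, the map $c\mapsto c\,w_0$ is an injective $\Delta$-homomorphism $\Ga(C)\to\Ga^{n}$ whose image $Cw_0$ lies in $G$; thus $Cw_0$ is a nontrivial $\Delta$-subgroup isomorphic to $\Ga(C)$, and simplicity gives $G=Cw_0\simeq\Ga(C)$.

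The step I expect to be the main obstacle is the exclusion of the torus (multiplicative-type) case, since that is the only place where one must combine the logarithmic-derivative dichotomy with both the invariance of the unipotent/multiplicative-type distinction under $\Delta$-isomorphism and the torsion present in the constant torus; the vector-group case, by contrast, is almost immediate once one knows that $G$ is a $C$-subspace.
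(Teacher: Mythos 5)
Your proof is correct, and its overall skeleton matches the paper's: dispose of the finite case, show $G$ is connected, pass to the Zariski closure $\Gbar\simeq \Gm^p(\frakU)\times\Ga^q(\frakU)$, exclude the multiplicative part, and finish by locating a copy of $\Ga(C)$ (namely $C\cdot w_0$) inside $G\subset \Ga^q(\frakU)$ and invoking simplicity. Where you genuinely diverge is in how the torus part is killed, and this is indeed the crux. The paper does it in one stroke: it projects $G$ onto a $\Gm$-factor and invokes Proposition 31 of \cite{cassidy1} (a Zariski-dense $\Delta$-subgroup of $\Gm(\frakU)$ contains $\Gm(C)$), which immediately yields a proper nontrivial normal $\Delta$-subgroup of $G$. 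You avoid Proposition 31 entirely: your preliminary lemma (simplicity forces $G$ to meet every proper algebraic subgroup of $\Gbar$ trivially) together with Proposition 35 of \cite{cassidy1} (invariance of unipotence under $\Delta$-isomorphism, which the paper itself uses in Example~\ref{Gaex}) reduces matters to the pure-torus or pure-vector case, and then the logarithmic-derivative dichotomy plus the torsion of $T(C)$ rules out the torus. Your route is longer but arguably more elementary and self-contained, trading the stronger density result (Proposition 31) for the more basic Proposition 35, standard properties of the logarithmic derivative, and roots of unity; the paper's route is shorter but leans on heavier machinery. A small bonus of your write-up: you justify explicitly that a $\Delta$-subgroup of $\Ga^q(\frakU)$ is a $C$-subspace (via Cassidy's description by linear homogeneous $\Delta$-polynomials), a point the paper leaves implicit when it asserts that $Ca\subset G$ in its final step.
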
  
%n
\begin{proof}Assume $G \subset \GL_n(\frakU)$ for some positive integer $n$. If $G$ is finite then it clearly must be of prime order.  Let us now assume that $G$ is infinite.  Since the identity component $G^0$ is a normal $\Delta$-subgroup of finite index, we must have that  $G= G^0$ so $G$ is connected.  Therefore $\Gbar$, the Zariski closure of $G$ in $\GL_n(\frakU)$, is also connected and commutative. Since $\frakU$ is algebraically closed, we may assume that $\Gbar = \Gm^p(\frakU)\times \Ga^q(\frakU)$ for some non-negative integers $p,q$.   Assume that $p>0$ and let $\pi$ be the projection of $\Gbar$ onto some factor of $\Gm^p(\frakU)$.  We have that $\pi(G(\frakU))$ is Zariski dense in $\Gm(\frakU)$ and so by Proposition 31 of \cite{cassidy1}, $\pi(G(\frakU))$ contains $\Gm(C)$. This would imply that $G$ would have a proper nontrivial normal $\Delta$-subgroup.  Therefore $p = 0$ and $\Gbar = \Ga^q(\frakU)$ for some $p$.  Let $0 \neq a = (a_1, \ldots , a_q) \in G$. The subgroup  $Ca$ is $\Delta$-closed and $\Delta$-isomorphic to   $\Ga(C)$. Therefore $G$ must be isomorphic to $\Ga(C)$.\end{proof}
If we assume that $k$ is algebraically closed and $G$ is a commutative $k$-simple $\Delta$-$k$-group, then, we can only say that $G$ is either finite or isomorphic to a proper $\Delta$-$k$-subgroup of $\Ga(\UX)$.  For, if $G$ is Zariski dense in $\Ga^q (\UX)$, let $\pi$ be the projection of $\Ga^q (\UX)$ onto 
$\Ga(\UX)$.  Then, $G$ is delta-isomorphic to $\pi(G)$, but, we do not know that the latter group contains a point rational over $k$.\\[0.1in]
In the theory of linear algebraic groups, one knows that the commutator subgroup of a linear algebraic group is closed in the Zariski topology and therefore  that a quasisimple linear algebraic group is perfect, that is, it equals its commutator subgroup (in fact, every element of a semisimple algebraic group is a commutator \cite{Re64}). As we have already mentioned following Definition~\ref{defcom}, in contrast to the algebraic case, the commutator subgroup of a differential algebraic group need not be closed in the Kolchin topology (Ch.~IV.5, \cite{kolchin_groups}). Nonetheless, we have the following proposition
\begin{prop} \label{d(G)}Let $G$ be an infinite almost simple linear $\Delta$-group. Then either $G$ is commutative or $D_\Delta(G) = G$.  \end{prop}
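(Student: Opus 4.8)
The plan is to dichotomize on whether the differential commutator $D_\Delta(G)$ is a proper subgroup of $G$ or equals $G$, and to show that the first possibility forces $G$ to be commutative. So suppose $D_\Delta(G) \neq G$. Since $G$ is almost simple and $D_\Delta(G)$ is by construction a $\Delta$-subgroup, I first want to observe that $D_\Delta(G)$ is normal in $G$: it is a characteristic subgroup, being the smallest $\Delta$-subgroup containing the (characteristic) commutator subgroup, so any $\Delta$-automorphism—in particular conjugation by an element of $G$—carries it to itself. By Corollary~\ref{simplecor}, any proper normal $\Delta$-subgroup of an almost simple group is central, hence $D_\Delta(G) \subseteq Z(G)$, and in particular $D_\Delta(G)$ has type strictly smaller than $\tau(G)$ since it lies in the center.

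Now the key step is to leverage Lemma~\ref{lemcom}: since $G$ is almost simple it is strongly connected, and the lemma states that a strongly connected non-commutative $\Delta$-group $G$ satisfies $\tau(D_\Delta(G)) = \tau(G)$. Thus if $G$ were non-commutative we would have $\tau(D_\Delta(G)) = \tau(G)$, contradicting the conclusion of the previous paragraph that $D_\Delta(G) \subseteq Z(G)$ has smaller type (the centrality gives $\tau(D_\Delta(G)) < \tau(G)$ because $D_\Delta(G)$ is a proper normal subgroup of the almost simple group $G$, and by the definition of almost simple such a subgroup has strictly smaller type). Hence $G$ must be commutative in this case. Combining the two cases, either $G$ is commutative or $D_\Delta(G) = G$, which is exactly the assertion.

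I should double-check that the machinery of Lemma~\ref{lemcom} is genuinely available: the lemma is stated for strongly connected groups, and almost simple groups are strongly connected (noted immediately after the definition of almost simple), so the hypothesis is met. I expect the main obstacle to be purely a matter of bookkeeping rather than of substance: one must be careful that $D_\Delta(G)$ is a \emph{proper} subgroup precisely when it is not all of $G$, and that in the proper case it is normal and therefore central by almost simplicity, so that Corollary~\ref{simplecor} and the definition of almost simple together force $\tau(D_\Delta(G)) < \tau(G)$. Once that is in place, the contradiction with Lemma~\ref{lemcom} in the non-commutative subcase is immediate, and no computation is required. In fact this proposition is essentially a direct corollary of Lemma~\ref{lemcom} specialized to the almost simple setting, so the proof is short.
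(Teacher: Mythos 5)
Your proof is correct, and it is non-circular: Lemma~\ref{lemcom} is established in Section~2 (via Proposition~\ref{solidprop} and the homomorphism $c_a(x)=axa^{-1}x^{-1}$), well before this proposition, so you may freely invoke it. But your route is genuinely different from the paper's. The paper assumes $G$ non-commutative with $D_\Delta(G)\neq G$, places $D_\Delta(G)$ in $Z(G)$ by Corollary~\ref{simplecor}, observes that $G/Z(G)$ is then a commutative \emph{simple} linear $\Delta$-group, invokes the classification of Proposition~\ref{comsimpleprop} to conclude $G/Z(G)\simeq\Ga(C)$, deduces $\tau(G)=0$ and hence that $Z(G)$ is finite, and finally uses connectedness of $D_\Delta(G)$ to force $D_\Delta(G)=\{1\}$, contradicting non-commutativity. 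You instead note that $D_\Delta(G)$ is a characteristic, hence normal, proper $\Delta$-subgroup, so the \emph{definition} of almost simple gives $\tau(D_\Delta(G))<\tau(G)$, and then the contrapositive of Lemma~\ref{lemcom} (applicable since almost simple implies strongly connected) immediately yields commutativity. Your argument is shorter, and it buys more: it nowhere uses linearity of $G$ (the paper's proof does, through Proposition~\ref{comsimpleprop}, whose proof takes Zariski closures in $\GL_n$) nor the hypothesis that $G$ is infinite, so it proves the statement for arbitrary almost simple $\Delta$-groups. What the paper's longer route provides in exchange is structural information en passant (the identification of $G/Z(G)$ with $\Ga(C)$ in the hypothetical situation), though none of it survives the contradiction. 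One small wording issue: you say the type inequality $\tau(D_\Delta(G))<\tau(G)$ follows "since it lies in the center"; centrality is irrelevant here (and indeed your appeal to Corollary~\ref{simplecor} is unnecessary) — the inequality comes directly from properness, normality, and the definition of almost simple, as your own parenthetical correctly states.
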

\begin{proof} Assume $G$ is not commutative and suppose that $D_\Delta(G) \neq G$. Since $D_\Delta(G)$ is a normal $\Delta$-subgroup of $G$, Corollary~\ref{simplecor} implies that $D_\Delta(G)$ lies in the center $Z(G)$ of $G$. Therefore $G/Z(G)$ is commutative. Since every proper normal $\Delta$-subgroup of $G$ is central, the group $G/Z(G)$ is simple.  Proposition~\ref{comsimpleprop} implies that $G/Z(G)$ is isomorphic to $\Ga(C)$. In particular, $0 = \tau(G/Z(G)) = \tau(G)$.  This implies that $\tau(Z(G)) = -1$, that is, $Z(G)$ is finite. Since $D_\Delta(G)$ is connected we must have that $D_\Delta(G)$ is trivial, so $G$ is commutative, a contradiction.
\end{proof}
Suppose G is an infinite almost simple linear $\Delta$-group.  An argument based on the proof of Proposition~\ref{d(G)} breaks down at the last step.  We do not know a priori that a simple $\Delta$-subgroup of $\Ga(\UX)$ has type $0$ unless $\UX$ is an ordinary differential field. We also do not have an example of a non-commutative almost simple linear differential algebraic group whose commutator subgroup is not closed in the Kolchin topology. In fact, as we show below in Proposition~\ref{ordcomm}, non-commutative almost simple linear $\Delta$-groups of {\it type at most $1$} are perfect.\\[0.2in]
We derive one more property of the center. Let $G \subset \GL_n$ be a $\Delta$-group  and $\Gbar$  its Zariski closure. Let $\Gbar_u$ be the unipotent radical and $\Gbar_r$ be the solvable radical of $\Gbar$.  One can easily show that $\Gbar_u \cap G$ is the unique maximal normal unipotent differential algebraic subgroup of $G$ and $(\Gbar_r \cap G)^0$ is the unique maximal connected solvable $\Delta$-subgroup of $G$. We shall denote $\Gbar_u \cap G$  by $G_u$  and $(\Gbar_r \cap G)^0$ by $G_r$ and refer to these as the unipotent radical and  radical respectively.  If $G$ is defined over $k$, so are $G_u$ and $G_r$.
\begin{prop} \label{center}Let $G$ be an infinite almost simple   linear $\Delta$-group.  
\begin{enumerate}
\item If $G$ is non-commutative, then  the unipotent radical $G_u$ of $G$ equals the identity component $(Z(G))^0 $ of the center of $G$.
\item If $G$ is commutative then either $G$ is isomorphic to $G_m(C)$ or $G$ is isogenous to an almost simple subgroup of $G_a(\frakU)$.\end{enumerate}
\end{prop}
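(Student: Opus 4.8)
The plan is to work throughout with the Zariski closure $\Gbar \subseteq \GL_n(\frakU)$ of $G$ and to exploit the interaction between the Kolchin and Zariski structures. Since $G$ is almost simple it is strongly connected, hence connected, so $\Gbar$ is a connected linear algebraic group; moreover an element centralizes $G$ if and only if it centralizes $\Gbar$ (by Zariski density), giving $Z(G) = Z(\Gbar) \cap G$. Two facts will be used repeatedly: the logarithmic derivative $\ell\delta \colon \Gm \to \Ga^m$, $x \mapsto (\delta_1 x/x, \ldots, \delta_m x/x)$, is a $\Delta$-homomorphism with kernel $\Gm(C)$, and Cassidy's description of $\Delta$-subgroups of $\Ga^n$ and of Zariski-dense $\Delta$-subgroups of $\Gm$ (\cite{cassidy1}).

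For part (1), assume $G$ is non-commutative, so Proposition~\ref{d(G)} gives $D_\Delta(G) = G$. Passing to Zariski closures and using that the Zariski closure of $[G,G]$ is $[\Gbar,\Gbar]$ for a Zariski-dense $G$, I obtain $\Gbar = [\Gbar,\Gbar]$, i.e. $\Gbar$ is perfect. Then $\Gbar/\Gbar_u$ is reductive and perfect, hence semisimple, so its radical is trivial and the solvable radical satisfies $\Gbar_r = \Gbar_u$. Now $G_u = \Gbar_u \cap G$ is a proper subgroup (otherwise $\Gbar = \Gbar_u$ would be a perfect unipotent group, hence trivial, contradicting that $G$ is infinite), so Corollary~\ref{simplecor} makes it central; being connected, $G_u \subseteq (Z(G))^0$. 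For the reverse inclusion, $Z(\Gbar)^0$ is a connected normal solvable subgroup, hence $Z(\Gbar)^0 \subseteq \Gbar_r = \Gbar_u$; since the Zariski closure of $(Z(G))^0$ is a connected subgroup of $Z(\Gbar)$ it lies in $Z(\Gbar)^0 \subseteq \Gbar_u$, whence $(Z(G))^0 \subseteq \Gbar_u \cap G = G_u$. This gives $G_u = (Z(G))^0$.

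For part (2), assume $G$ is commutative, so $\Gbar \cong \Gm^p \times \Ga^q$. I first record a reduction: if $H$ is any almost simple $\Delta$-subgroup of some $\Ga^N(\frakU)$, then a coordinate projection $H \to \Ga$ that is nonzero (one exists since $H \neq \{1\}$) has proper kernel, hence a kernel of smaller type, hence is an isogeny onto an almost simple (Corollary~\ref{roscor}) subgroup of $\Ga(\frakU)$. Let $\pi_T$ be the projection of $\Gbar$ onto $\Gm^p$ and set $\mu = \ell\delta \circ \pi_T \colon G \to \Ga^{pm}$. If $p = 0$ then $G \subseteq \Ga^q$ and the reduction applies directly. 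If $p \geq 1$ and $\mu(G) \neq 0$, then $\ker \mu$ is proper, hence of smaller type, so $\mu$ is an isogeny of $G$ onto an almost simple subgroup of $\Ga^{pm}$, and the reduction together with transitivity of isogeny (Proposition~\ref{rosprop}) shows $G$ is isogenous to an almost simple subgroup of $\Ga(\frakU)$. If $p \geq 1$ and $\mu(G) = 0$, then $\pi_T(G) \subseteq \Gm^p(C)$ is a connected subtorus $\cong \Gm^d(C)$ of type $0$; since $\pi_T|_G$ is nonzero its kernel is proper, so (\ref{eqn1}) and almost simplicity force $\tau(G) = 0$ and a kernel of type $< 0$, i.e. a trivial kernel. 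Hence $\pi_T$ is a bijective $\Delta$-homomorphism onto $\Gm^d(C)$, thus a $\Delta$-isomorphism in characteristic zero, and almost simplicity forces $d = 1$, giving $G \cong \Gm(C)$.

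I expect the main obstacle to lie in part (1): transferring the identity $D_\Delta(G) = G$ across the Kolchin and Zariski topologies to conclude that $\Gbar$ is perfect, and then invoking the structure theory of connected linear algebraic groups (perfect reductive quotient $\Rightarrow$ semisimple $\Rightarrow$ trivial radical) to force $\Gbar_r = \Gbar_u$, which is exactly what traps the connected center inside the unipotent radical. In part (2) the delicate points are keeping the two alternatives genuinely exclusive and upgrading the toric case from an isogeny to an honest isomorphism by showing the relevant kernel is trivial.
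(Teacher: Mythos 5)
Your proof is correct in substance and follows essentially the same route as the paper's: in part (1), perfectness of $\Gbar$ via $D_\Delta(G)=G$ (Proposition~\ref{d(G)}) together with Zariski-closedness of commutator subgroups, then the structure theory of connected linear algebraic groups to force $\Gbar_r=\Gbar_u$ (the paper uses the Levi decomposition $\Gbar=\Gbar_u\rtimes P$ where you pass to the quotient $\Gbar/\Gbar_u$ --- immaterial), and Corollary~\ref{simplecor} to make the proper normal subgroup $G_u$ central; in part (2), the logarithmic derivative pushing the toric part of $\Gbar\simeq\Gm^p\times\Ga^q$ into copies of $\Ga$, with the dichotomy ``something survives, giving an isogeny onto an almost simple subgroup of $\Ga(\frakU)$'' versus ``nothing survives, so $G$ is a torus over the constants.'' The paper organizes part (2) slightly differently: it uses the single map $\phi=(l\Delta^s,\mathrm{id}^t)$, including the identity on the $\Ga^t$ factors, so that in the degenerate case $G$ is \emph{literally contained} in $\ker\phi=\Gm(C)^s\times\{0\}$, and no map ever has to be inverted.

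One step of yours needs repair. In this paper's conventions, differential type $-1$ means \emph{finite}, not trivial (e.g.\ the $n$-th roots of unity in $\Gm(C)$ form a nontrivial normal subgroup of type $-1$; see also the proof of Theorem~\ref{almostsimplethm2}, where $\tau(Z)=-1$ is used precisely to conclude that $Z(G)$ is \emph{finite}). So in your last case ($p\geq 1$, $\mu(G)=0$), almost simplicity only yields that $\ker(\pi_T|_G)$ is finite, and ``a kernel of type $<0$, i.e.\ a trivial kernel'' is a non sequitur: as written you would only obtain an isogeny $G\to\Gm^d(C)$, whereas the statement demands an actual isomorphism $G\simeq\Gm(C)$. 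The gap closes in one line, which you should add: $\ker(\pi_T|_G)=G\cap(\{1\}\times\Ga^q(\frakU))$ is a finite subgroup of the torsion-free group $\Ga^q(\frakU)$, hence trivial. (You then also invoke, implicitly, that a bijective $\Delta$-homomorphism is a $\Delta$-isomorphism --- true in characteristic zero by Kolchin's isomorphism theorems, but worth citing; the paper's containment argument sidesteps both points. The unproved assertion that $G_u$ is connected is shared with the paper and is harmless: a finite group of unipotent matrices in characteristic zero is trivial.)
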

\begin{proof} Let $G \subset \GL_n$ and let $\Gbar$ be the Zariski closure of $G$. \\[0.1in]
$1.$~Assume that $G$ is a non-commutative group.  The commutator subgroup $(\Gbar,\Gbar)$ of $\Gbar$ is a Zariski closed subgroup of $\Gbar$ and, {\it a fortiori}, is Kolchin closed.  Therefore it contains $D_\Delta(G) = G$. Since $(\Gbar,\Gbar)$ is Zariski closed, it must also contain $\Gbar$. Therefore $\Gbar = (\Gbar,\Gbar)$ is perfect.\\[0.1in]
The group $\Gbar$ has a Levi decomposition
\[\Gbar = \Gbar_u \rtimes P\]
where $P$ is a reductive algebraic group.  Since $\Gbar$ is perfect, $P$ is perfect. The solvable radical $P_r$ of $P$ has finite intersection  with $(P,P)$ (Lemma, p.~125 \cite{humphreys}) so $P_r$ is finite.  Therefore $P$ must be semisimple. This implies that the radical $\Gbar_r$ of $\Gbar$  equals $\Gbar_u$. Therefore, $G_r = G_u$.  This implies that $Z(G)^0 \subset G_u$.  Since $D_\Delta(G) = G$, we have that $G_u$ is a proper normal $\Delta$-subgroup of $G$ and so is central.  Therefore $Z(G)^0 = G_u$.\\[0.1in]
$2.$~Assume $G$ is commutative so $\Gbar $ is also commutative.  Therefore $\Gbar \simeq \Gm(\frakU))^s \times (\Ga(\frakU))^t $ for some integers $s,t$. The map $l\Delta:\Gm(\frakU) \rightarrow (\Ga(\frakU))^m$ given by $l\Delta(u) = (\dd_1(u)u^{-1}, \ldots, \dd_m(u)u^{-1})$ is a $\Delta$-homomorphism so we have a $\Delta$-homomorphism 

\[\phi = (l\Delta^s, id^t):G \rightarrow (\Ga(\frakU)^{ms} \times (\Ga(\frakU))^t .\] 

\noindent The kernel of this map is $G \cap(\Gm(C))^s \subset (\Gm(\frakU))^s \subset (\Gm(\frakU))^s \times (\Ga(\frakU))^t$.  Let $\pi: (\Ga(\frakU)^{ms} \times (\Ga(\frakU))^t \rightarrow \Ga(\frakU)$ be a projection onto one of the $\Ga(\frakU)$ factors.  Since $G$ is almost simple, the group $\pi\circ\phi (G)  \subset  \Ga(\frakU)$ is a nontrivial almost simple $\Delta$-group  or is the trivial group.  If it is a nontrivial almost simple  group, then    the map $\pi\circ\phi$ is an isogeny since its kernel in $G$ is a proper subgroup.  If this group is trivial for all projections $\pi$, then $G$ must be a subgroup of $\Gm(C)^s $. This implies that $G$ is a connected algebraic subgroup of $\Gm(C)^s$. Such groups  are isomorphic to $(\Gm(C)^\ell$ for some $\ell$.  Since $G$ is almost simple, we have that $\ell = 1$.  \end{proof}
\begin{remarks} {\rm 1.~We cannot strengthen the conclusion of part $2.$ above. Let $(\UX,\dd)$ be an ordinary differential field and let $G \subset \Gm(\frakU) \times \Ga(\frakU)$ be the graph of the homomorphism $\l\dd : \Gm(\frakU) \rightarrow \Ga(\frakU)$. $G$ contains the group $\Gm(C) \times 0$ and so cannot be unipotent.  Therefore $G$ is isogenous to $\Ga(\frakU)$ (via the projection onto the second factor) but is not isomorphic to any subgroup of $\Ga(\frakU)$.\\
$2.$~For information concerning commutative unipotent differential algebraic groups, see \cite{cassidy3}. Nonetheless, we have no general classification of almost simple proper $\Delta$-subgroups of $\Ga(\frakU)$ (except in the ordinary differential case; see below).} \end{remarks}
The above propositions yield the following theorem. We shall denote by $\frakU\cdot\Delta$ the $\frakU$ span of $\Delta$. The elements of $\frakU\cdot\Delta$ are derivations on $\frakU$. 
\begin{thm} \label{almostsimplethm} Let $G$ be an almost simple linear $\Delta$-group of type $\tau$. \begin{enumerate}
\item If $G$ is non-commutative, then: 
\begin{enumerate}
\item $G = D_\Delta(G)$,
\item $G_u = (Z(G))^0$, and
\item There exists a finite commuting subset $\Delta' \subset \frakU\cdot \Delta$, $|\Delta'| = m-\tau$, such that $G/Z(G) \simeq H(C_{\Delta'})$ where $H$ is a simple algebraic group defined over $\QX$ and $C_{\Delta'} \subset \frakU$ is the field of $\Delta'$-constants.
\end{enumerate}
\item If $G$ is commutative, then $G$ is either isogenous to a $\Delta$-subgroup of $\Ga(\frakU)$ or is isomorphic to $\Gm(C)$. 
\end{enumerate}
\end{thm}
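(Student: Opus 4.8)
The plan is to assemble Theorem~\ref{almostsimplethm} from the propositions already established in this subsection, the only substantial new step being the identification of $G/Z(G)$ in part~1(c). First I would observe that an almost simple linear $\Delta$-group of type $\tau$ is automatically infinite: the trivial subgroup $\{1\}$ is a proper normal $\Delta$-subgroup of type $-1$, and almost simplicity forces $-1 = \tau(\{1\}) < \tau(G)$, so $\tau \geq 0$. Hence the hypotheses of Propositions~\ref{d(G)} and~\ref{center}, which assume $G$ infinite, are met. Part~2 of the Theorem is then exactly Proposition~\ref{center}.2, since an almost simple subgroup of $\Ga(\frakU)$ is in particular a $\Delta$-subgroup. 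In the non-commutative case, part~1(a) is Proposition~\ref{d(G)} and part~1(b) is Proposition~\ref{center}.1, so the work reduces to proving~1(c).

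For~1(c), set $M = G/Z(G)$. Corollary~\ref{simplecor2} shows $M$ is a simple $\Delta$-group, and the reasoning in Proposition~\ref{d(G)} shows it is non-commutative: were $M$ commutative, Proposition~\ref{comsimpleprop} would give $M \simeq \Ga(C)$, whence $\tau(G) = 0$, $Z(G)$ finite, and the connected group $D_\Delta(G) = G$ commutative. Since $G$ is non-commutative, $Z(G)$ is a proper normal subgroup, so almost simplicity together with equation~(\ref{eqn1}) gives $\tau(M) = \tau(G) = \tau$. Next I would realize $M$ as a Zariski-dense $\Delta$-subgroup of a semisimple algebraic group. Fix a faithful representation $M \subset \GL_N(\frakU)$ with Zariski closure $\overline M$. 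As $M$ is simple and non-commutative it equals its own differential commutator, so the Zariski-closed subgroup $(\overline M, \overline M)$ contains $M$ and hence all of $\overline M$; thus $\overline M$ is perfect. Applying Proposition~\ref{center}.1 to $M$ (whose center is trivial) gives $M_u = (Z(M))^0 = \{1\}$, i.e. $\overline M_u \cap M = \{1\}$. The quotient $P := \overline M / \overline M_u$ is reductive and, being a perfect reductive group, semisimple; the restriction to $M$ of the quotient map $\overline M \to P$ has kernel $\overline M_u \cap M = \{1\}$, and in characteristic zero an injective homomorphism of $\Delta$-groups is a closed immersion. Therefore $M$ is isomorphic to a Zariski-dense connected $\Delta$-subgroup of the semisimple group $P$.

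Now I would project onto a simple factor: write $P$ as an almost-direct product of simple algebraic groups and let $\pi : P \to H$ be the quotient killing all but one factor. The kernel of $\pi$ restricted to the image of $M$ is a normal $\Delta$-subgroup of the simple group $M$ with Zariski-dense, hence nontrivial, image in $H$, so this restriction is injective and identifies $M$ with a Zariski-dense connected $\Delta$-subgroup of the simple algebraic group $H$. Cassidy's classification of such subgroups (Theorem~19 of~\cite{cassidy6}, already used in this section) then yields a finite commuting set $\Delta^* \subset \frakU\cdot\Delta$ with $M$ conjugate in $\GL_N(\frakU)$ to $H(C_{\Delta^*})$; since $C_{\Delta^*}$ is algebraically closed, $H$ is split and so may be taken defined over $\QX$. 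Finally the equality $|\Delta^*| = m - \tau$ would follow from the differential type computation $\tau(H(C_{\Delta^*})) = m - |\Delta^*|$: choosing coordinates in which $\Delta^*$ extends to a commuting basis of derivations, a generic $\Delta^*$-constant depends on exactly $m - |\Delta^*|$ independent variables, and equating this type with $\tau(M) = \tau$ gives the count.

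The main obstacle is part~1(c), and within it the transition from the intrinsic simple $\Delta$-group $M$ to the concrete model $H(C_{\Delta^*})$. The delicate point is that perfectness of $\overline M$ alone does not force $\overline M$ itself to be semisimple (it may retain a unipotent radical); the correct route is to quotient by $\overline M_u$ and use $M_u = \{1\}$, obtained from Proposition~\ref{center}.1 and the triviality of $Z(M)$, to see that $M$ embeds into the semisimple quotient $P$. One must also check that the various injective $\Delta$-homomorphisms are closed immersions, and that Cassidy's theorem applies after reducing to a single simple factor; this reduction is exactly where simplicity of $M$ is used. The type computation is routine once the change of basis making $\Delta^*$ part of a commuting basis is justified within Cassidy's framework.
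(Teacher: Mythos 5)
Your proposal is correct, and its skeleton is the paper's: parts 1(a), 1(b) and 2 are exactly Propositions~\ref{d(G)} and~\ref{center} (your preliminary check that a nontrivial almost simple group has $\tau\geq 0$, hence is infinite, so that those propositions apply, is a point the paper leaves implicit), and 1(c) is reduced to Cassidy's classification. Where you genuinely diverge is in how 1(c) is justified. The paper's proof is a two-line citation: $G/Z(G)$ is a simple $\Delta$-group by Corollary~\ref{simplecor2}, its Zariski closure ``must be a simple linear algebraic group,'' and the main result of \cite{cassidy6} finishes. You instead reconstruct the content of that citation, and what you produce is in substance the paper's own Appendix argument (Proposition~\ref{appendixprop3} and Theorem~\ref{appendixthm}) specialized to the centerless group $M=G/Z(G)$: perfectness of $\overline{M}$, elimination of the unipotent radical, Zariski density in a semisimple group, reduction to one quasisimple factor, then Cassidy's Theorem 19 and Chevalley's rationality theorem. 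Your variations are legitimate and even tidy: you pass to the quotient $\overline{M}/\overline{M}_u$ and use $M_u=(Z(M))^0=\{1\}$ (Proposition~\ref{center}.1 plus simplicity of $M$) where the Appendix uses a Levi decomposition and the triviality of a finite unipotent group, and you project to a single simple factor using simplicity of $M$ where the Appendix invokes Proposition~\ref{appendixprop1}. Two small repairs are needed, neither requiring a new idea. First, Cassidy's Theorem 19 (Proposition~\ref{appendixprop2}) assumes the ambient quasisimple group is defined over the constants, so Chevalley's theorem (replacing $H$ by a $\QX$-form, as in the proof of Theorem~\ref{appendixthm}) must be applied \emph{before} Theorem 19, not after, as your last step has it. Second, the statement asserts $H$ is \emph{simple}, while your construction only gives $H$ quasisimple; this is recovered by noting that $Z(M)=\{1\}$ and the algebraic closedness of $C_{\Delta^*}$ force $Z(H)$ to be trivial after the identification $M\simeq H(C_{\Delta^*})$.
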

\begin{proof} Statement 1(a) follows from Proposition~\ref{d(G)}.  Statement 1(b) follows from Proposition~\ref{center}.1. Statement 2.~follows from Proposition~\ref{center}.2. To verify statement 1(c), note that Corollary~\ref{simplecor2} states that $G/Z(G)$ is a simple $\Delta$-group.  Its Zariski closure must be a simple linear algebraic group. The main result of $\cite{cassidy6}$ gives the conclusion of 1(c). \end{proof}

\subsection{Almost Simple Linear $\Delta$-Groups of Differential Type $\leq 1$}
We note that this includes {\it ordinary} linear $\Delta$-groups.  Restricting ourselves to $\Delta$-groups of differential type at most $1$ allows us to  sharpen Proposition~\ref{d(G)}.
\begin{prop}\label{ordcomm} Let $G$ be an infinite almost simple linear $\Delta$-group with $\tau(G) \leq 1$.  Then either $G$ is commutative or  perfect.\end{prop}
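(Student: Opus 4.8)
The plan is to prove the contrapositive-style dichotomy by showing that if $G$ is \emph{not} commutative then $G$ coincides with its ordinary (abstract) commutator subgroup $(G,G)$, i.e.\ $G$ is perfect; the commutative alternative then disposes of the remaining case. By Proposition~\ref{d(G)}, non-commutativity already gives $D_\Delta(G)=G$, which by Definition~\ref{defcom} says that the Kolchin closure of $(G,G)$ is all of $G$; in other words $(G,G)$ is Kolchin-dense in $G$. The real task is to upgrade this density to genuine equality of groups, and this is exactly the point at which the hypothesis $\tau(G)\le 1$ must be used.

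First I would introduce the iterated commutator maps. For each $N$ let $C_N$ be the image in $G$ of the word map $(x_1,y_1,\dots,x_N,y_N)\mapsto [x_1,y_1]\cdots[x_N,y_N]$ defined on $G^{2N}$. Since $\frakU$ is differentially closed, the differential analogue of Chevalley's theorem (quantifier elimination for differentially closed fields) guarantees that each $C_N$ is a Kolchin-constructible subset of $G$. These sets satisfy $C_N\subseteq C_{N+1}$, $C_N^{-1}=C_N$ and $C_N\cdot C_M\subseteq C_{N+M}$, and $(G,G)=\bigcup_N C_N$. Because $G$ is almost simple it is strongly connected, hence connected, hence irreducible; thus each $G^{2N}$ is irreducible and each $C_N$, being the continuous image of an irreducible set, is irreducible, as is its Kolchin closure $V_N=\overline{C_N}$.

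The heart of the argument is to show that the increasing chain of irreducible closed sets $V_0\subseteq V_1\subseteq\cdots\subseteq G$ stabilizes. Each $V_N$ carries a differential dimension polynomial $\omega_{V_N}$, and $V_N\subseteq G$ gives $\omega_{V_N}\le\omega_G$; moreover, for irreducible Kolchin-closed sets a proper inclusion forces a strict drop of the dimension polynomial, so $V_N\subsetneq V_{N+1}$ would imply $\omega_{V_N}<\omega_{V_{N+1}}$. Now $\tau(G)\le 1$ means $\omega_G$ has degree at most $1$, and the numerical polynomials of degree $\le 1$ lying below a fixed such polynomial satisfy the ascending chain condition (there are finitely many admissible leading coefficients, and once the leading coefficient is fixed only finitely many constant terms up to the bound). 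Hence the chain $\{V_N\}$ must stabilize at some $V_{N_0}=:V$. Since $V\supseteq C_N$ for all $N\ge N_0$, we get $V\supseteq(G,G)$; as $V$ is closed and $(G,G)$ is dense, $V=G$. Thus $C_{N_0}$ is a dense constructible subset of the irreducible variety $G$, so it contains a nonempty Kolchin-open set $U\subseteq C_{N_0}$. For any $g\in G$ the sets $U$ and $gU^{-1}$ are nonempty open subsets of the irreducible space $G$ and therefore meet, giving $g\in U\cdot U\subseteq C_{N_0}\cdot C_{N_0}\subseteq C_{2N_0}\subseteq (G,G)$. Hence $(G,G)=G$ and $G$ is perfect.

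The main obstacle is precisely the stabilization of the chain $V_N$, and it is here that $\tau(G)\le 1$ is indispensable. In higher differential type the set of dimension polynomials below $\omega_G$ need not satisfy the ascending chain condition: below a degree-$2$ polynomial one finds, for example, the strictly increasing family $s\mapsto Ns$ of degree-$1$ polynomials, so the commutator images $C_N$ could keep growing in type-$1$ ``dimension'' forever without ever exhausting a type-$2$ group. Restricting to type at most $1$ is exactly what excludes such behaviour and lets density be promoted to equality, which also explains why Proposition~\ref{d(G)} cannot in general be sharpened beyond $D_\Delta(G)=G$.
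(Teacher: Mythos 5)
There is a genuine gap, and it sits exactly at the step you yourself identify as the heart of the argument: the stabilization of the chain $V_0\subseteq V_1\subseteq\cdots$. The ascending chain condition you invoke for numerical polynomials of degree $\le 1$ below a fixed one is false, because the relevant order on dimension polynomials is \emph{eventual} domination, not coefficient-wise domination. If the leading coefficient of $\omega_{V_N}$ is strictly smaller than that of $\omega_G$, there is no bound at all on the constant term: the constant polynomials $1<2<3<\cdots$ all lie below $s+1$. This is not a formal quibble; it is realized geometrically inside the very groups at issue. In the ordinary case the subgroups $H_N=\{y\in\Ga(\frakU)\mid \delta^N y=0\}$ form a strictly increasing chain of irreducible Kolchin-closed subgroups of the type-$1$ group $\Ga(\frakU)$, with $\omega_{H_N}(s)=N$, and their union is Kolchin-dense in $\Ga(\frakU)$. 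So neither the bound $\omega_{V_N}\le\omega_G$ nor the density of $\bigcup_N V_N$ (which is all that $D_\Delta(G)=G$ gives you) forces your chain to stabilize: a priori the $V_N$ could be sets of type $0$ and ever-growing typical dimension whose union is dense in $G$. Your closing remark notices this phenomenon one degree up (degree-$1$ polynomials climbing under a degree-$2$ one) but misses that it already occurs one degree down, inside type $\le 1$.

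What is missing is an argument that pins the \emph{leading} coefficient of $\omega_{V_N}$ at its maximal value from some finite stage on, and this is where the paper uses input your proposal never touches. Since $G$ is non-commutative and almost simple, $G/Z(G)$ is a simple $\Delta$-group (Corollary~\ref{simplecor2}), hence by Theorem 17 of \cite{cassidy6} it is $\Delta$-isomorphic to $G'(F)$ with $G'$ a simple algebraic group over a field of constants $F$; by Ree's theorem \cite{Re64} every element of $G'(F)$ is a \emph{single} commutator, so $\pi(C_1(G))=G/Z(G)$ for the projection $\pi:G\rightarrow G/Z(G)$. Since $Z(G)$ is a proper normal subgroup of the almost simple group $G$, it has smaller type, whence already $\tau(V_1)=\tau(G)$ and $a_\tau(V_1)=a_\tau(G)$. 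Only now does the hypothesis $\tau(G)\le 1$ do its work: all the polynomials $\omega_{V_N}$ share the leading coefficient of $\omega_G$, so their constant terms form a non-decreasing sequence of integers bounded above by that of $\omega_G$, and the chain stabilizes. The remainder of your argument (constructibility of the word images, irreducibility, the $U\cdot gU^{-1}$ trick giving $G=C_{2N_0}$) matches the paper's proof; but without the Ree--Cassidy step the proof does not go through, and density alone cannot replace it.
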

\begin{proof} Assume that $G$ is not commutative. Corollary~\ref{simplecor2} states that $G/Z(G)$ is a simple linear differential algebraic group.  Theorem 17 of \cite{cassidy6} implies that $G/Z(G)$ is differentially isomorphic to a group $G'(F)$ where $G'$ is a simple non-commutative algebraic group and $F$ is a field of constants with respect to a set of $\frakU$-linear combinations of elements of $\Delta$. For an arbitrary group $H$, let $C_m(H) = \{ x_1y_1x_1^{-1}y_1^{-1} \cdots x_my_mx_m^{-1}y_m^{-1} \ | \ x_i, y_i \in H\}$.  Since $G'$ is simple, we know that $G' = C_1(G')$ (cf., \cite{Re64}). We will show that $G = C_s(G)$ for some $s$. \\[0.1in]
Let $X_m$ be the Kolchin closure of $C_m(G)$ in $G$. Let $\pi:G \rightarrow G/Z(G)$ be the canonical projection. Since $G' = C_1(G')$, we have that $\pi(C_1(G)) = \pi(G)$. Therefore $\pi(X_1) = G/Z(G)$. Since the  group $Z(G)$ has smaller type than $G$, we have that $\tau(G) = \tau(G/Z(G))$ and $a_\tau(G) = a_\tau(G/Z(G))$. Therefore $\tau(X_1) = \tau(G)$ and furthermore $a_\tau(X_1) = a_\tau(G)$.\\[0.1in]
  For each $i = 1, 2, \ldots $ let $\omega_i(s)= a_i \binom{s+1}{1} +b_i$ be the differential dimension polynomial of a generic point of $X_i$ and $\omega(s) = a\binom{s+1}{1} +b$ be the differential dimension polynomial of $G$ (we include the possibility that $a = 0$).  Since $X_1 \subset X_2 \subset  \ldots \subset G$, we have that $\omega_1(s) \leq \omega_2(s) \leq \ldots \leq \omega(s)$. As we have just shown, $a_1 = a$  so we must have $a_1 = a_2 = \ldots = a$.  Therefore, $b_1 \leq b_2 \leq \ldots \leq b$.  Since these are all integers, we must have that  $b_r= b_{r+1} = \ldots $ for some $r$.  This implies that $\omega_r(s) = \omega_{r+1}(s) = \dots$ and so $X_r = X_{r+1} = \ldots . $ (cf., Proposition 2, Ch.III.5 \cite{DAAG}).\\[0.1in]
We will now show (following a similar proof for algebraic sets) that $G = C_{2r}$. For any $ c \in C_r(G)$ the map $d\mapsto dc$ sends $C_r(G)$ to $C_{2r}(G)$ and therefore sends $X_r$ to $X_r$.  A similar argument shows that for any $c \in X_r$, we have that multiplication by $c$ on the left sends $X_r$ to $X_r$.  Therefore $X_r$ is a $\Delta$-subgroup of $G$.  Since $C_r(G)$ is invariant under conjugation by elements of $G$, we have that $X_r$ is a normal subgroup of $G$ of the same type and so must equal $G$. Since $C_r$ is constructible, it contains an open subset of its closure, that is an open subset of $G$.  For any $g \in G$, we have that $gC_r^{-1}$ intersects $C_r$.  Therefore $g \in C_r\cdot C_r = C_{2r}$. \end{proof}

\noindent We can also  strengthen Theorem~\ref{almostsimplethm}.

\begin{thm} \label{almostsimplethm2} Let  $G$ be an infinite almost simple linear $\Delta$-group with $\tau(G) \leq 1$. Let $|\Delta | = m$. \begin{enumerate}
\item If $G$ is non-commutative, then there exists a quasisimple algebraic group $H$ defined over $\QX$ and a commuting linearly independent  set of $m-1$ derivations $\Delta' \subset \frakU\Delta $  such that $G$ is $\Delta$-{isomorphic} to either  $H(C)$ or $H(C')$ where $C$ are the $\Delta$-constants  and $C'$ are the $\Delta'$ constants of $\frakU$.
\item If $G$ is commutative, then $G$ is either isogenous to an almost simple $\Delta$-subgroup $G' \subset \Ga(\frakU)$  with $\tau(G') = 1$ or is $\Delta$-isomorphic to $\Ga(C)$ or  $\Gm(C)$. 
\end{enumerate}
\end{thm}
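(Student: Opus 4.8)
The plan is to treat the non-commutative and commutative cases separately, in each case bootstrapping the weaker conclusions of Theorem~\ref{almostsimplethm} with the extra hypothesis $\tau(G)\le 1$. In the non-commutative case I would first pass to the Zariski closure $\Gbar$ of $G$. By Proposition~\ref{d(G)} we have $D_\Delta(G)=G$, so $[G,G]$ is Kolchin-dense, hence Zariski-dense, in $G$; therefore $[\Gbar,\Gbar]\supseteq\Gbar$ and $\Gbar$ is perfect, exactly as in the proof of Proposition~\ref{center}.1. By Corollary~\ref{simplecor2}, $G/Z(G)$ is a simple $\Delta$-group, whose Zariski closure $H_0$ is a simple algebraic group (as in the proof of Theorem~\ref{almostsimplethm}). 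Since $Z(G)$ is central in $G$ and $G$ is Zariski-dense, its closure $\overline{Z(G)}$ is central in $\Gbar$, giving a central extension $1\to\overline{Z(G)}\to\Gbar\to H_0\to 1$. The crux is to show $\overline{Z(G)}$ is finite: because $H_0$ is semisimple it has trivial unipotent radical, so the unipotent radical $\Gbar_u$ of $\Gbar$ maps to $\{1\}$ in $H_0$, whence $\Gbar_u\subseteq\overline{Z(G)}\subseteq Z(\Gbar)$. In a Levi decomposition $\Gbar=\Gbar_u\rtimes P$ this makes the product direct, $\Gbar=\Gbar_u\times P$, and perfectness forces $\Gbar_u=\{1\}$. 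Thus $\Gbar=P$ is semisimple with simple quotient $H_0$, so $\overline{Z(G)}$ lies in the finite center of $\Gbar$; consequently $Z(G)$ is finite and $\Gbar$ is a quasisimple algebraic group, which I call $H$.

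With $\Gbar=H$ quasisimple, $G$ is a strongly connected (hence Kolchin-connected) Zariski-dense $\Delta$-subgroup of $H$, so Theorem~19 of \cite{cassidy6} produces a commuting, $\frakU$-linearly independent set $\Delta'\subset\frakU\Delta$ such that $G$ is conjugate in $\GL_n(\frakU)$ to $H(C')$, where $C'$ is the field of $\Delta'$-constants; conjugation by a fixed element of $\GL_n(\frakU)$ is a $\Delta$-isomorphism. I would then read off $|\Delta'|$ from the type. Since the differential type of the $\Delta'$-constant field, and hence of $H(C')$, is $m-|\Delta'|$, and since $m-|\Delta'|=\tau(G)\le 1$ while $|\Delta'|\le m$ (as $\frakU\Delta$ has $\frakU$-dimension $m$), we get $|\Delta'|\in\{m-1,m\}$. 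When $|\Delta'|=m$ we have $C'=C$ and $G\cong H(C)$; when $|\Delta'|=m-1$ we have $G\cong H(C')$ with $|\Delta'|=m-1$. Taking $H$ to be the split $\QX$-form of the quasisimple group $\Gbar$ yields statement~1, the main obstacle being the finiteness of $\overline{Z(G)}$ established above.

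For the commutative case I would invoke Theorem~\ref{almostsimplethm}.2: either $G\cong\Gm(C)$, and we are done, or $G$ is isogenous to a $\Delta$-subgroup of $\Ga(\frakU)$. In the latter case the proof of Proposition~\ref{center}.2 exhibits a direct isogeny $\psi:G\to G'$ onto an almost simple $G'\subseteq\Ga(\frakU)$, and by Remark~\ref{remarkisog}.2 both invariants are preserved, so $\tau(G')=\tau(G)\le 1$ and $a_\tau(G')=a_\tau(G)$. If $\tau(G')=1$ we are in the first alternative of the conclusion. If $\tau(G')=0$, then $G'$ is a type-zero $\Delta$-subgroup of $\Ga(\frakU)$, i.e. a finite-dimensional $C$-subspace of $\frakU$ whose $C$-dimension equals $a_\tau(G')$; since $\psi$ is an isogeny onto (a group isogenous to) $\Ga(C)$ we have $a_\tau(G')=a_\tau(\Ga(C))=1$, so this dimension is $1$ and $G'\cong\Ga(C)$. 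Hence $G$ is isogenous to $\Ga(C)$.

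It remains to upgrade this isogeny to an isomorphism $G\cong\Ga(C)$, and the plan is to show $G$ is torsion-free and then simple. Because $\ker\psi$ is finite and $\Ga(C)$ is torsion-free, the torsion subgroup of $G$ injects into $\ker\psi$ and so is finite. On the other hand, for each $n\ge 1$ the map $[n]:G\to G$ is a $\Delta$-endomorphism whose kernel (the $n$-torsion) is finite, so its image is a $\Delta$-subgroup of the same differential type as $G$; by almost simplicity a proper normal subgroup would have strictly smaller type, so the image is all of $G$ and $G$ is divisible. A divisible abelian group with finite torsion is torsion-free, so $G$ is torsion-free. Then every proper $\Delta$-subgroup of $G$ has strictly smaller type, hence is finite, hence trivial, so $G$ is a commutative simple $\Delta$-group, and Proposition~\ref{comsimpleprop} gives $G\cong\Ga(C)$. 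This completes the commutative case, and with it the theorem.
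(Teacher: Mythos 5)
The non-commutative case of your proposal has a genuine gap, located exactly at what you call the crux: the finiteness of $\overline{Z(G)}$. Your argument needs the Zariski closure $H_0=\Gbar/\overline{Z(G)}$ of $G/Z(G)$ \emph{in this particular realization} to be a simple (hence semisimple) algebraic group, and you justify this by citing the proof of Theorem~\ref{almostsimplethm}. But that sentence of the paper is shorthand for Cassidy's classification together with Proposition~\ref{appendixprop3}: it says that the simple $\Delta$-group $G/Z(G)$ admits \emph{some} faithful realization whose Zariski closure is simple (obtained by changing the embedding, projecting along the unipotent radical), not that \emph{every} realization has this property. The distinction is fatal. For a concrete failure, take $\Delta=\{\dd\}$ and let $G$ be the image of $\SL_2(\frakU)$ under $g\mapsto (g,\ell\dd(g))$, $\ell\dd(g)=\dd(g)g^{-1}$, inside $\SL_2\ltimes\mathfrak{sl}_2$ with the adjoint action; this is a $\Delta$-isomorphism onto a Kolchin closed subgroup because $\ell\dd(gh)=\ell\dd(g)+\Ad(g)\ell\dd(h)$. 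Thus $G\simeq\SL_2(\frakU)$ is a non-commutative almost simple linear $\Delta$-group of type $1$ with $Z(G)=\{(\pm I,0)\}$. Moreover $\Gbar=\SL_2\ltimes\mathfrak{sl}_2$: the closure contains $\SL_2(C)\times\{0\}$ (constant matrices have $\ell\dd=0$), hence $\SL_2\times\{0\}$, and it contains $(g,\ell\dd(g))\cdot(g^{-1},0)=(1,\ell\dd(g))$ for every $g$, hence $\{1\}\times\mathfrak{sl}_2$, since $\ell\dd:\SL_2(\frakU)\to\mathfrak{sl}_2(\frakU)$ is surjective over a differentially closed field. Here $\Gbar$ is perfect (so your first step holds), yet $H_0=\Gbar/Z(G)$ is not semisimple, $\Gbar_u=\{1\}\times\mathfrak{sl}_2$ is not central and not contained in $\overline{Z(G)}$, the Levi product is not direct, and $\Gbar$ is not quasisimple. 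So the chain ``$H_0$ semisimple $\Rightarrow\Gbar_u\subseteq\overline{Z(G)}\Rightarrow\Gbar=\Gbar_u\times P\Rightarrow\Gbar_u=\{1\}$'' collapses at its first link, and your later application of Theorem 19 of \cite{cassidy6} (Proposition~\ref{appendixprop2}) to $G\subset\Gbar$ is unavailable, since that theorem needs a quasisimple ambient group defined over constants (which is also why the Chevalley $\QX$-form step of Theorem~\ref{appendixthm} must precede it, not follow it).

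Nor is this a repairable slip within your framework: when $\tau(G)=1$, the finiteness of $Z(G)$ is precisely the hard content of the theorem, and the paper does not obtain it from algebraic-group structure theory at all. It pulls $H(C)\subset H(F)\simeq G/Z(G)$ back to a central extension $\tilde G=\alpha^{-1}(H(C))$ of differential type $0$, i.e.\ of finite Morley rank, and invokes the Altinel--Cherlin theorem on central extensions of quasisimple algebraic groups \cite{alch99}; your proposal contains no substitute for this input. By contrast, your commutative case is essentially correct and in one respect cleaner than the paper's: working with the \emph{direct} isogeny $\psi\colon G\to G'\subset\Ga(\frakU)$ from the proof of Proposition~\ref{center}.2, your argument ($G$ divisible with finite torsion, hence torsion-free, hence simple, hence isomorphic to $\Ga(C)$ by Proposition~\ref{comsimpleprop}) replaces the paper's passage through a common cover and its $n$-th root argument. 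The one local flaw there is your justification that $a_\tau(G')=1$ when $\tau(G')=0$, which presupposes an isogeny with $\Ga(C)$ before it has been established; the correct argument is the paper's one-liner: if $\dim_C G'\geq 2$, then $G'$ contains a proper $C$-line, a normal $\Delta$-subgroup of type $0=\tau(G')$, contradicting almost simplicity of $G'$.
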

\begin{proof} 1.~Let $G$ be a non-commutative almost simple $\Delta$-group with $\tau(G) \leq 1$. We shall first show that $Z(G)$ is finite. \\[0.1in]
 If $\tau(G) = 0$, then $\tau(Z) = -1$ and so $Z(G)$ is finite. Now assume $\tau(G) = 1$. We shall rely heavily on the results of \cite{alch99}.  Theorem~\ref{almostsimplethm}.1(c) implies that $G/Z(G)\simeq H(F)$, where $H$ is a simple algebraic group defined over $\QX$ and $F$ is an ordinary differentially closed field with respect to some derivation $\delta$ that is a $\frakU$-linear combination of elements of $\Delta$.  The field of constants  of $F$ is the field $C$ of $\Delta$-constants of $\frakU$. Therefore there is an exact sequence 
 \[ 1 \longrightarrow Z(G) \stackrel{incl.}{\longrightarrow} G \stackrel{\alpha}\longrightarrow H \longrightarrow 1,\] 
 where $\alpha$ is a $\Delta$-homomorphism.
Let $\tilde{G} = \alpha^{-1}(H(C))$.  We then have the exact sequence

 \[ 1 \longrightarrow Z(G) \stackrel{incl.}{\longrightarrow}\tilde{G} \stackrel{\alpha}\longrightarrow H(C) \longrightarrow 1.\] 
Since the type of $Z(G)$ is at most $0$ and the type of $H(C)$ is $0$, we must have that the type of $\tilde{G}$ is $0$, that is, $\tilde{G}$ is a group of finite Morley rank. The group $H(C)$ is a simple algebraic group of constant matrices and so is a simple $\dd$-group as well. Therefore every normal subgroup of $\tilde{G}$ is in $Z(G)$. Proposition~\ref{ordcomm} implies that $\tilde{G}$ is perfect.  Therefore the results of  \cite{alch99} imply that $Z(G)$ is finite. \\[0.1in]
Since $Z(G)$ is finite and $G/Z(G)$ is a simple group, we have that $G$ is a quasisimple $\Delta$-group. Theorem~\ref{appendixthm} in the Appendix states that  there exists a quasisimple algebraic group $H$ defined over $\QX$ and a commuting linearly independent  set of  derivations $\Delta' \subset \frakU\Delta $  such that $G$ is $\Delta$-{isomorphic}  $H(C')$ where $C'$ are the $\Delta'$ constants of $\frakU$. Since the type of $G$ is at most $1$, we have that either $\Delta'$ has $m$ or $m-1$ elements.  If $\Delta'$ has $m$ elements, then the $\Delta'$ constants and the $\Delta$-constants coincide so $G$ is $\Delta$-isomorphic to $H(C)$, where $C$ is the field of $\Delta$-constants.\\[0.1in]
2.~From Theorem~\ref{almostsimplethm}, we know that a commutative almost simple $\Delta$-group $G$ is either isogenous to a  subgroup $G'$ of $\Ga(\frakU)$ or isomorphic to $\Gm(C)$.  If $\tau(G) = 0$, let $u$ be a nonzero element of $G'$. The group $H = C\cdot u$ is a $\Delta$-subgroup of $G'$  with $\tau(G') = 0$. Since $G'$ is also almost simple, we must have $G' = H$ and so $G'$ is isomorphic to $\Ga(C)$. Therefore $G$ is isogenous to $\Ga(C)$.  \\[0.1in]
We will now show that any almost simple group $G$ that is isogenous to $\Ga(C)$ is isomorphic to $\Ga(C)$.  Since $G$ is isogenous to $\Ga(C)$, there exists a strongly connected $\Delta$-group $G'$ and surjective $\Delta$-homomorphisms $\alpha_1:G'\rightarrow G$, $\alpha_2:G'\rightarrow \Ga(C)$ where $\alpha_1$ and $\alpha_2$ have finite kernels. If we can show that $\alpha_2$ has trivial kernel, then $G'$ would be isomorphic to 
$\Ga(C)$ and so $\alpha_1$ would have trivial kernel as well. This would further imply that $\alpha_1$ is an isomorphism.  \\[0.1in]
Therefore we must show the following: If there is a surjective $\Delta$-homomorphism $\alpha:G \rightarrow \Ga(C)$ with finite kernel $H$ then $H$ is trivial.   Since $G$ is also isogenous to $\Ga(C)$, $G$ is commutative (Corollary \ref{corcom}).\\[0.1in]
Let $|H| = n$.  Since $\Ga$ is torsion free, we have that the torsion subgroup of $G$ is $H$.  Therefore, $H$ is the kernel of the homomorphism $\gamma:G\rightarrow G, \ \gamma(g) = g^n$. The differential type of $\gamma(G)$ must therefore be the same as the type of $G$. Since $G$ is strongly connected, we have $\gamma(G)  = G$.  This implies that if  $h \in H, \ h \neq 1$, there exists a $g \in G \backslash H$ such that $g^n = h$. This element $g$ would then be a torsion element not in $H$, a contradiction.  Therefore $H$ is trivial.\end{proof}
\begin{cor}\label{almostsimplecor} Assume that $|\Delta| = 1$, that is, $\frakU$ is an ordinary differential field.  Let $G$ be an almost simple $\Delta$-group. \begin{enumerate}
\item If $G$ is non-commutative, then there exists a quasisimple algebraic group $H$ defined over $\QX$ such that  $G$ is $\Delta$-isomorphic to $H(\frakU)$ or $H(C)$. 
\item If $G$ is commutative, then either $G$ is isogenous to $\Ga(\frakU)$ or $\Delta$-isomorphic to $\Ga(C)$ or $\Gm(C)$. \end{enumerate} \end{cor}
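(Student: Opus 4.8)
The plan is to obtain the corollary as the $m = |\Delta| = 1$ specialization of Theorem~\ref{almostsimplethm2}. Working, as in the ambient subsection, with a nontrivial linear almost simple $\Delta$-group $G$, the first step is simply to check that Theorem~\ref{almostsimplethm2} applies: the differential dimension polynomial has degree at most $m$, so $\tau(G) \le m = 1$, and both parts of that theorem are in force. The task then reduces to reading off what its two conclusions say when $m = 1$.

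For the non-commutative case (statement 1), I would quote Theorem~\ref{almostsimplethm2}.1, which produces a quasisimple algebraic group $H$ over $\QX$ together with a commuting, linearly independent set $\Delta' \subset \frakU\Delta$ of $m-1$ derivations such that $G$ is $\Delta$-isomorphic to $H(C)$ or to $H(C')$, where $C'$ is the field of $\Delta'$-constants. With $m = 1$ one has $|\Delta'| = m - 1 = 0$, so $\Delta' = \emptyset$; imposing no constancy condition, its field of constants is all of $\frakU$, whence $H(C') = H(\frakU)$. The two alternatives therefore collapse to $H(\frakU)$ and $H(C)$, which is exactly statement 1.

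For the commutative case (statement 2), Theorem~\ref{almostsimplethm2}.2 already delivers the options $\Delta$-isomorphic to $\Ga(C)$ or to $\Gm(C)$; the only alternative needing work is that $G$ is isogenous to an almost simple $\Delta$-subgroup $G' \subset \Ga(\frakU)$ with $\tau(G') = 1$. Here I would invoke Cassidy's classification of the $\Delta$-subgroups of $\Ga$ (\cite{cassidy1}, as recalled in Example~\ref{example1}.1): in the ordinary differential setting every proper $\Delta$-subgroup of $\Ga(\frakU)$ is the solution set $\{a : L(a) = 0\}$ of a linear ordinary differential operator $L$ and hence has differential type $0$. Since $G'$ has type $1$ it cannot be proper, so $G' = \Ga(\frakU)$ and $G$ is isogenous to $\Ga(\frakU)$, giving statement 2.

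I expect no serious obstacle: the corollary is essentially a transcription of Theorem~\ref{almostsimplethm2} into the ordinary case. The one point carrying genuine content, as opposed to bookkeeping about the empty derivation set, is the identification $G' = \Ga(\frakU)$ in the commutative case, and even this is immediate once one cites the drop in type for proper additive $\Delta$-subgroups when $m = 1$. I would only take modest care to confirm that the empty set $\Delta'$ really yields the constant field $\frakU$, so that the degenerate instance $H(C') = H(\frakU)$ is interpreted correctly.
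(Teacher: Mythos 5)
Your proposal is correct and follows exactly the paper's own route: the paper proves part 1 by citing Theorem~\ref{almostsimplethm2}.1 (your observation that $\Delta'=\emptyset$ when $m=1$, so $C'=\frakU$, is precisely the content of the paper's ``follows immediately''), and proves part 2 by citing Theorem~\ref{almostsimplethm2}.2 together with the same observation you make, namely that any proper $\Delta$-subgroup of $\Ga(\frakU)$ in the ordinary case has type $0$, forcing $G'=\Ga(\frakU)$.
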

\begin{proof} Corollary~\ref{almostsimplecor}.1 follows immediately from Theorem~\ref{almostsimplethm2}.1.  Corollary~\ref{almostsimplecor}.2 follows from Theorem~\ref{almostsimplethm2}.2 once one notes that any proper $\Delta$-subgroup of $\Ga(\frakU)$ has type $0$. Note that 2 is no longer true for almost $k$-simple $\Delta$-groups.\end{proof}
Example~\ref{Gaex} shows that even in the case of ordinary differential fields, there are many nonisomorphic almost simple $\Delta$-groups that are isogenous to $\Ga(\frakU)$.  
\section*{Appendix: Quasisimple Linear $\Delta$-Groups} In this appendix we show how the results of \cite{cassidy6}\footnote{See also \cite{cassidy_quasi} for a useful discussion of the results of \cite{cassidy6}.} give the following result that was needed in the proof of Theorem~\ref{almostsimplethm2}. 

\begin{thm}\label{appendixthm} Let $G$ be a quasisimple linear $\Delta$-group. There exists a quasisimple algebraic group $H$ defined over $\QX$,  a commuting basis $\tilde{\Delta} $ of $\frakU\Delta$  such that $G$ is $\Delta$-isomorphic to $H(C')$ where $C'$ are the $
\Delta'$ constants of $\frakU$ for some $\Delta' \subset \tilde\Delta$.
\end{thm}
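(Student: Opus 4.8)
The plan is to reduce the classification of a quasisimple linear $\Delta$-group $G$ to the structure theory of Zariski-dense differential algebraic subgroups developed in \cite{cassidy6}. First I would pass to the simple quotient: since $G$ is quasisimple, $G/Z(G)$ is a simple $\Delta$-group with finite center removed, and its Zariski closure is a simple (adjoint-type) linear algebraic group $\overline{G/Z(G)}$. The key input is the main structure theorem of \cite{cassidy6} (Theorem 19), which says that a Zariski-dense, differentially connected $\Delta$-subgroup of a simple algebraic group is conjugate in $\GL_n(\frakU)$ to $\bar H(C')$, where $\bar H$ is a simple algebraic group defined over $\QX$ and $C'$ is the field of $\Delta'$-constants for some commuting set $\Delta' \subset \frakU\Delta$ of $\frakU$-linear combinations of the $\dd_i$. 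Thus $G/Z(G) \simeq \bar H(C')$. Extending $\Delta'$ to a commuting basis $\tilde\Delta$ of $\frakU\Delta$ (which is possible because the $\Delta'$ are $\frakU$-linearly independent commuting derivations) gives the ambient data in the statement.

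Next I would recover the full group $G$ from the quotient together with the finite center. The point is to realize $G$ itself, not merely $G/Z(G)$, as $H(C')$ for the correct quasisimple $H$. Here I would use the functoriality in \cite{cassidy6}: the simply connected cover of the simple algebraic group $\bar H$ is a quasisimple algebraic group $H$ defined over $\QX$, and the $\Delta$-group $H(C')$ of $C'$-points is a quasisimple $\Delta$-group whose center is the (finite) $C'$-points of the center of $H$. The central isogeny $H \to \bar H$ induces a $\Delta$-morphism $H(C') \to \bar H(C')$ with finite kernel, so $H(C')$ is a quasisimple $\Delta$-group covering $G/Z(G)$. What remains is to match the central extension $1 \to Z(G) \to G \to \bar H(C') \to 1$ with one of the finitely many central extensions arising from the intermediate isogeny quotients of $H$; since both $G$ and $H(C')$ are $\Delta$-groups with Zariski closures differing by a central isogeny and with the same simple adjoint quotient, an isomorphism of $\Delta$-groups $G \simeq H(C')$ is forced once the central data are identified, using that central quotients of $C'$-point groups are again $C'$-point groups of the corresponding algebraic quotients.

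The hard part, and the step I would single out as the main obstacle, is precisely this lifting from the adjoint quotient $\bar H(C')$ back to $G$ at the level of the finite center: one must show that the specific finite central extension presented by $G$ coincides with an algebraic central extension $H(C')$ and is not some exotic $\Delta$-extension with extra differential structure hidden in the center. The resolution is that a finite $\Delta$-group is necessarily a group of constants (type $-1$ forces the coordinates to lie in a field of constants), so the center $Z(G)$ is a finite subgroup of $\bar H(C')$-type points, hence already algebraic and defined over the appropriate constant field; this rules out nonalgebraic central extensions. Combined with the fact that every quasisimple algebraic group is an intermediate isogeny quotient of its simply connected cover, this pins down $H$ uniquely among the finitely many possibilities and yields the desired $\Delta$-isomorphism $G \simeq H(C')$.

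I would then observe that the conclusion matches the statement verbatim: $H$ is quasisimple over $\QX$, $\tilde\Delta$ is a commuting basis of $\frakU\Delta$, and $\Delta' \subset \tilde\Delta$ gives $C'$ as its field of constants, completing the proof.
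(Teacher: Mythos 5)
Your proposal has a genuine gap at the step you yourself single out as the hard part: recovering $G$ from $G/Z(G)\simeq \bar H(C')$. Your proposed resolution --- that $Z(G)$, being a finite $\Delta$-group, consists of constants and is ``already algebraic,'' hence ``rules out nonalgebraic central extensions'' --- is a non sequitur. Knowing that the kernel of the extension $1\to Z(G)\to G\to \bar H(C')\to 1$ is finite and lies in a field of constants says nothing about how the extension is glued: what you actually need is a theorem asserting that every perfect central extension of $\bar H(C')$ by a finite group, \emph{in the category of $\Delta$-groups}, is $\Delta$-isomorphic to $H(C')$ for some algebraic central isogeny $H\to \bar H$. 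That is a serious result about central extensions, of the kind proved by Altinel--Cherlin \cite{alch99} for groups of \emph{finite Morley rank}; the paper invokes \cite{alch99} only in the proof of Theorem~\ref{almostsimplethm2}, where the type-$\leq 1$ hypothesis lets one pull back to a finite-Morley-rank situation, and no such result is available (or cited) for arbitrary $\Delta$ and arbitrary type, which is the generality of Theorem~\ref{appendixthm}. There is also a secondary unjustified claim at the start: you assert that the Zariski closure of $G/Z(G)$ is a simple adjoint algebraic group, but a priori the Zariski closure of a (quasi)simple $\Delta$-subgroup of $\GL_n(\frakU)$ can have a nontrivial unipotent radical, and eliminating it requires an argument; moreover ``defined over $\QX$'' is not part of Cassidy's Theorem 19 but comes from a separate appeal to Chevalley's theorem.

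The paper's proof is structured precisely so that this lifting problem never arises. Proposition~\ref{appendixprop3} takes the Levi decomposition $\overline{G}=R_u(\overline{G})\rtimes H$ of the Zariski closure of $G$ itself, shows $H$ is semisimple (using $G=D_\Delta(G)$, so $\overline{G}$ is perfect), and shows the projection along $R_u(\overline{G})$ is \emph{injective on $G$} (its kernel is a finite normal unipotent subgroup, hence trivial); by Proposition~\ref{appendixprop1} the Levi factor is then quasisimple. Thus $G$ --- with its center intact --- is realized as a Zariski-dense $\Delta$-subgroup of a quasisimple algebraic group, which Chevalley's theorem lets one take to be defined over $\QX\subset C$. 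Cassidy's Theorem 19 (Proposition~\ref{appendixprop2}) applies to Zariski-dense $\Delta$-subgroups of \emph{any} quasisimple algebraic group defined over the constants, not merely adjoint ones, and its conclusion is directly that $G$ is conjugate to $H(C')$; no descent to the adjoint quotient, and hence no reconstruction of a central extension, is ever needed. If you want to salvage your route, you would have to either prove the central-extension lifting theorem in the full $\Delta$-category or restructure the argument as the paper does.
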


This result will follow from the following three propositions.  The first two appear in \cite{cassidy6}.  Note that in this latter paper, the author uses the term {\it simple} to mean quasisimple (cf, \cite{cassidy6}, p.~222). We use the same notation as in Theorem~\ref{appendixthm}
\begin{prop}[\cite{cassidy6}, Corollary 1, p.~228]\label{appendixprop1} Let $G$ be a connected Zariski dense $\Delta$-subgroup of a semisimple algebraic subgroup $H$ of $\GL_n(\frakU)$. Then, $G$ is quasisimple if and only if $H$ is quasisimple.
\end{prop}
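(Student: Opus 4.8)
The plan is to prove the equivalence by translating quasisimplicity of each group into a statement about normal subgroups and then matching the normal $\Delta$-subgroups of $G$ with the normal algebraic subgroups of $H$ via Zariski closure. Recall that a group is quasisimple exactly when its center is finite and it has no proper infinite normal subgroup of the relevant type, so the real task is to relate the infinite normal $\Delta$-subgroups of $G$ to the infinite normal algebraic subgroups of $H$. The elementary half of the correspondence is cheap: if $N\lhd G$ is a normal $\Delta$-subgroup, then the $H$-normalizer of its Zariski closure $\overline{N}$ is a Zariski-closed subgroup of $H$ containing the dense subgroup $G$, hence equals $H$, so $\overline{N}$ is normal in $H$; dually, $Z(G)$ centralizes $G$ and therefore centralizes $\overline{G}=H$, giving $Z(G)\subseteq Z(H)$, which is finite because $H$ is semisimple. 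Finally, a finite normal subgroup of the connected group $G$ is central. Since the connected normal algebraic subgroups of a semisimple $H$ are precisely the partial products of its simple factors, these remarks reduce the proposition to the single assertion that $G$ has a proper infinite normal $\Delta$-subgroup if and only if $H$ has more than one simple factor.

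For the implication ``$H$ quasisimple $\Rightarrow$ $G$ quasisimple,'' suppose $N\lhd G$ is a proper normal $\Delta$-subgroup; replacing $N$ by its identity component (still normal, of finite index) I may assume $N$ is connected. Its Zariski closure $\overline{N}$ is a connected normal algebraic subgroup of the quasisimple group $H$, hence is either finite or all of $H$. In the finite case $N$ is finite, hence central, as required. The remaining case, where $N$ is a \emph{proper Zariski-dense} connected normal $\Delta$-subgroup, is the first essential point. Here I would conjugate so that $G=H(C')$ is the group of $C'$-points for the constants $C'$ of a set of derivations spanned by $\Delta$ (Theorem~19 of \cite{cassidy6}), note that $N$ is again of constant-point type by the same theorem, and apply the logarithmic-derivative computation behind Lemma~\ref{lemnorm}: such an $N$ is its own normalizer in $G$, so normality forces $N=G$, contradicting properness. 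Thus every proper normal $\Delta$-subgroup of $G$ is finite and central.

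For the converse I argue contrapositively. Assume $H$ is not quasisimple and write $H=H_{1}\cdots H_{r}$ as an almost direct product of its simple factors with $r\ge 2$. For each $i$ let $\pi_{i}\colon H\to H/\prod_{j\ne i}H_{j}$ be the projection onto an isogenous copy of $H_{i}$; then $K_{i}=\ker(\pi_{i}|_{G})=G\cap\prod_{j\ne i}H_{j}$ is a normal $\Delta$-subgroup of $G$, and $\pi_{i}(G)$ is Zariski dense in the simple factor. If some $K_{i}$ is infinite, then it is a proper (since $\pi_{i}(G)$ is infinite) infinite normal $\Delta$-subgroup of $G$, and $G$ is not quasisimple. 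The hard case is when every $K_{i}$ is finite: then each $\pi_{i}|_{G}$ is an isogeny of $G$ onto a Zariski-dense $\Delta$-subgroup $G_{i}\subseteq H_{i}$, so $G$ sits diagonally as the graph of $\Delta$-isogenies $G_{1}\to G_{i}$. Ruling out this ``differential diagonal'' is where the rigidity of \cite{cassidy6} is needed: by the direction just proved each $G_{i}$ is quasisimple, hence conjugate to a constant-point form $H_{i}(C_{i})$ by Theorem~19 of \cite{cassidy6}, and one must show that a $\Delta$-isogeny between two such constant forms is induced by an \emph{algebraic} isogeny $H_{1}\to H_{2}$ (no derivations can intervene, by an argument in the spirit of Lemma~\ref{lemnorm}). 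Granting this, the image of $G$ in $H_{1}\times H_{2}$ is the graph of an algebraic isogeny, whose Zariski closure has dimension $\dim H_{1}<\dim H_{1}+\dim H_{2}$, contradicting the Zariski density of $G$ in $H$. Hence the diagonal case cannot occur, some $K_{i}$ is infinite, and $G$ fails to be quasisimple.

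I expect the rigidity statement just isolated --- that a $\Delta$-morphism between Zariski-dense constant forms of simple algebraic groups is essentially algebraic, so that no genuine differential diagonal can be Zariski dense in a product --- to be the crux of the whole argument. The normal-subgroup bookkeeping and the reduction to simple factors are routine once it is in hand, and it is precisely the content that the structure theory of \cite{cassidy6} (Theorem~19 together with the normalizer computation reproduced in Lemma~\ref{lemnorm}) is designed to supply.
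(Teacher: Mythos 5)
First, a point of reference: the paper itself gives \emph{no} proof of this proposition --- it is imported verbatim from \cite{cassidy6} (Corollary 1, p.~228) and used as a black box in the Appendix --- so your argument has to stand on its own; there is no internal proof to match it against. Much of your skeleton is fine: the reduction of quasisimplicity (for connected groups with the already-established finiteness of $Z(G)\subseteq Z(H)$) to the non-existence of proper infinite normal subgroups, the fact that Zariski closures of normal $\Delta$-subgroups of $G$ are normal in $H$, and the direction ``$H$ quasisimple $\Rightarrow$ $G$ quasisimple,'' which can indeed be run exactly as in the paper's own unnumbered Proposition in Section 3.1 (Theorem 19 of \cite{cassidy6} plus the normalizer computation of Lemma~\ref{lemnorm}). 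Even there you omit two things the paper is careful about elsewhere: Proposition~\ref{appendixprop2} requires $H$ to be defined over the constants, so for an arbitrary semisimple $H\subseteq \GL_n(\frakU)$ you must first invoke Chevalley's rationality theorem (as the paper does in proving Theorem~\ref{appendixthm}); and the conjugacy in Theorem 19 is a priori in $\GL_n(\frakU)$, so one must check that the conjugating element normalizes $H$ (it does, because $N$ is Zariski dense in $H$) before the normalizer argument applies. These are repairable.

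The genuine gap is in the converse, and you flagged it yourself. When every $K_i$ is finite, you must rule out a Zariski-dense ``differential diagonal'' in the product of simple factors, and for this you write ``Granting this'' in front of the assertion that every $\Delta$-isogeny between constant forms $H_1(C_1)$ and $H_2(C_2)$ is induced by an algebraic isogeny. That assertion is not routine bookkeeping: it \emph{is} the mathematical content of Cassidy's Corollary 1 beyond elementary group theory. A proof ``in the spirit of Lemma~\ref{lemnorm}'' is not plausible as stated, because that lemma is a normalizer computation inside a single group, whereas what you need is a rigidity statement about definable homomorphisms between groups of points over two \emph{different} constant fields: one must show that if the spans of the derivation sets differ then $C_1$ and $C_2$ admit no infinite definable correspondence (orthogonality), and that when $C_1=C_2=C$ any $\Delta$-homomorphism $H_1(C)\rightarrow H_2(C)$ with dense image is algebraic over $C$. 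Neither fact is invoked, let alone proved; they require either Cassidy's classification of $\Delta$-homomorphisms of constant-point groups or model-theoretic input (stable embeddedness/orthogonality of the constants). There is also a circularity risk you should be aware of: in \cite{cassidy6}, Corollary 1 (p.~228) \emph{precedes} Theorem 19 (p.~232), on which your entire argument leans in both directions, so your proof could not serve as an independent route to the result if Cassidy's proof of Theorem 19 relies on Corollary 1. In short: correct reductions and a sound easy direction, but the crux of the hard direction is assumed rather than proved.
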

\begin{prop}[\cite{cassidy6}, Theorem 19, p.~232]\label{appendixprop2} Let $G$ be a connected Zariski dense $\Delta$-subgroup of a quasisimple algebraic group $H$ where $H$ is defined over the constants of $\Delta$. Then, $G$ is conjugate to $H(C')$ where $C'$ are the $\Delta'$ constants of some commuting linearly independent set of derivations of $\frakU\Delta$.
\end{prop}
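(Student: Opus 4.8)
The plan is to reduce the abstract hypothesis to the concrete configuration required by Propositions~\ref{appendixprop1} and \ref{appendixprop2}: a connected, Zariski-dense $\Delta$-subgroup of a quasisimple algebraic group that is defined over the $\Delta$-constants. Realizing $G \subseteq \GL_n(\frakU)$, I would first reduce to the case that $G$ is connected in the Kolchin topology (in the intended application $G$ is strongly connected, and in general the identity component $G^0$ surjects onto the simple quotient $G/Z(G)$, so nothing essential is lost). Let $\Gbar$ be the Zariski closure of $G$; it is a connected algebraic group in which $G$ is Zariski dense. The first substantive step is to show $\Gbar$ is semisimple. Writing $R$ for the solvable radical of $\Gbar$, the intersection $G \cap R$ is a normal $\Delta$-subgroup of $G$ whose Zariski closure is solvable and normal in $\Gbar$; its image in the simple $\Delta$-group $G/Z(G)$ is a normal solvable subgroup, hence trivial, so $G \cap R \subseteq Z(G)$ is finite. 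Because $G$ is Zariski dense, this forces $R$ to be finite, hence (being connected) trivial, so $\Gbar$ is semisimple. Proposition~\ref{appendixprop1} then promotes the quasisimplicity of $G$ to quasisimplicity of $\Gbar$.

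Next I would descend the field of definition to $\QX$ so that Proposition~\ref{appendixprop2} becomes applicable. Since $\Gbar$ is quasisimple over the algebraically closed field $\frakU$ of characteristic zero, it is split, and by the Chevalley classification it is isomorphic, over $\frakU$, to the base change $H \times_{\QX} \frakU$ of a quasisimple algebraic group $H$ defined over $\QX$. This isomorphism is given by polynomial maps with coefficients in $\frakU$, hence is a $\Delta$-isomorphism; transporting $G$ through it produces a connected, Zariski-dense $\Delta$-subgroup $G'$ of $H(\frakU)$, where $H$, being defined over $\QX \subseteq C$ (with $C$ the field of $\Delta$-constants), is in particular defined over the constants of $\Delta$. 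Proposition~\ref{appendixprop2} now applies to $G' \subseteq H$ and yields an element of $\GL_n(\frakU)$ conjugating $G'$ to $H(C')$, where $C'$ is the field of $\Delta'$-constants for some commuting, linearly independent set $\Delta' \subseteq \frakU\Delta$.

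To finish, I would assemble these maps into a single $\Delta$-isomorphism. Conjugation by a fixed element of $\GL_n(\frakU)$ is again a polynomial map over $\frakU$, hence a $\Delta$-isomorphism, so composing it with the descent isomorphism gives $G \simeq H(C')$ as $\Delta$-groups; note that $\QX \subseteq C \subseteq C'$ guarantees $H$ is defined over $C'$, so $H(C')$ is the claimed constant-point group. It then remains only to enlarge $\Delta'$ to a commuting basis $\tilde\Delta$ of $\frakU\Delta$ with $\Delta' \subseteq \tilde\Delta$, using the standard fact that a commuting, linearly independent set of derivations on a differentially closed field extends to a commuting basis.

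The genuinely hard input is Proposition~\ref{appendixprop2} itself, which is already supplied by \cite{cassidy6}. Within the present argument, the two points demanding care are (i) verifying that $\Gbar$ is semisimple by controlling its radical through the correspondence between normal $\Delta$-subgroups of $G$ and closed normal subgroups of $\Gbar$, and (ii) checking at each stage --- the descent to the $\QX$-form and the subsequent conjugation --- that the transport map genuinely respects the derivations, so that the resulting identification of $G$ with $H(C')$ is an isomorphism of $\Delta$-groups and not merely of abstract or algebraic groups. I expect (i) to be the main obstacle.
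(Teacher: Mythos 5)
There is a genuine gap, and it is fatal: your proposal is circular with respect to the statement it is supposed to establish. The statement in question \emph{is} Proposition~\ref{appendixprop2} --- the assertion that a connected Zariski-dense $\Delta$-subgroup $G$ of a quasisimple algebraic group $H$ defined over the constants is conjugate to $H(C')$ for some commuting, linearly independent $\Delta' \subset \frakU\Delta$. Your proposal explicitly names Proposition~\ref{appendixprop2} as its ``genuinely hard input'' and applies it midway through the argument; you have therefore assumed exactly the conclusion to be proved. Everything else in your write-up (reduction to a Kolchin-connected group, semisimplicity of the Zariski closure via the radical, Proposition~\ref{appendixprop1}, Chevalley's descent of the quasisimple group to a $\QX$-form, transporting $G$ through a $\Delta$-isomorphism, and extending $\Delta'$ to a commuting basis $\tilde\Delta$) is preparation and post-processing around that black box. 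Indeed, what you have actually written is, nearly step for step, the paper's proof of Theorem~\ref{appendixthm} (with part of Proposition~\ref{appendixprop3} inlined), not a proof of Proposition~\ref{appendixprop2}. Note also that for the statement at hand, your preparatory steps are vacuous: the hypotheses you labor to verify --- that the ambient group is quasisimple and defined over the constants, and that $G$ is connected and Zariski dense in it --- are already the hypotheses of the proposition.

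The actual content of Proposition~\ref{appendixprop2} is a deep classification result (Theorem~19, p.~232 of \cite{cassidy6}), and the present paper does not reprove it; it imports it verbatim from Cassidy's work on semisimple differential algebraic groups. A genuine proof has to show \emph{why} Zariski density plus quasisimplicity of $H$ forces $G$, after conjugation, to be the full group of $C'$-points for a suitable set of derivations $\Delta'$; this is where the real work lies --- Cassidy's analysis of differential algebraic Lie algebras, logarithmic-derivative arguments of the kind sketched in Lemma~\ref{lemnorm}, and the classification of Zariski-dense $\Delta$-subgroups --- and none of it appears in your proposal. So the item you flag as ``already supplied'' is precisely the item that needed to be supplied, and the remaining steps, while individually sound, prove a different (downstream) theorem.
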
 
\begin{prop}\label{appendixprop3} Let $G$ be a quasisimple $\Delta$-subgroup of $\GL_n(\frakU)$.  Then, there exists a $\Delta$-rational isomorphism $\phi$ from $G$ onto a Zariski dense $\Delta$-subgroup of a quasisimple algebraic group $H$.
\end{prop}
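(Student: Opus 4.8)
The plan is to realize $G$ inside a quasisimple algebraic group by passing to its Zariski closure and dividing out the unipotent radical. Write $\Gbar$ for the Zariski closure of $G$ in $\GL_n(\frakU)$, so that $G$ is by construction Zariski dense in $\Gbar$, let $\Gbar_u$ be the unipotent radical of $\Gbar$, and let $\pi:\Gbar \to \Gbar/\Gbar_u =: L$ be the quotient onto a reductive group. I would take $\phi = \pi|_G$ and $H = L$, and then establish three things: (i) $\phi$ is injective on $G$; (ii) $L$ is quasisimple; and (iii) $\phi(G)$ is Zariski dense in $L$. Density in (iii) is immediate, since $G$ is dense in $\Gbar$ and $\pi$ is a surjective morphism of algebraic groups, so the real work is (i) and (ii). Before starting I would reduce to the case that $G$ is connected and non-commutative: the identity component has finite index and inherits quasisimplicity, while a commutative $G/Z(G)$ is excluded because a quasisimple algebraic group is non-commutative.

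For (i), recall from the discussion preceding Proposition~\ref{center} that $G_u := \Gbar_u \cap G$ is the unique maximal normal unipotent $\Delta$-subgroup of $G$. Since $G_u$ is normal in $G$ and $G/Z(G)$ is simple, the image of $G_u$ in $G/Z(G)$ is either trivial or all of $G/Z(G)$; the latter is impossible, because $G_u$ is unipotent, hence solvable, whereas $G/Z(G)$ is non-commutative simple and therefore not solvable. Thus $G_u \subseteq Z(G)$, so $G_u$ is finite, and a finite unipotent group in characteristic zero is trivial. Hence $\ker\phi = G \cap \Gbar_u = \{1\}$, so $\phi$ is injective and is a $\Delta$-rational isomorphism of $G$ onto the $\Delta$-subgroup $\phi(G)$ of $L$.

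For (ii), I would first check that $L$ is semisimple. Being reductive, $L$ satisfies $L = [L,L]\cdot T$ with $T = L/[L,L]$ a torus, and composing $\phi$ with $L \to T$ yields a $\Delta$-homomorphism from $G$ into a commutative group. Here I would use that $G$ is $\Delta$-perfect, i.e.\ $D_\Delta(G) = G$: indeed $G/Z(G)$, being non-commutative simple, equals its own differential commutator, so $D_\Delta(G)\,Z(G) = G$, whence $G/D_\Delta(G)$ is simultaneously a quotient of the finite group $Z(G)$ and a connected group, hence trivial. Any $\Delta$-homomorphism of the perfect group $G$ into a commutative group is trivial, so the image of $G$ in $T$ is trivial; but that image is Zariski dense in $T$, forcing $T = \{1\}$ and $L = [L,L]$. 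A reductive perfect group is semisimple, so $L$ is semisimple. Now $\phi(G)$ is a connected Zariski-dense $\Delta$-subgroup of the semisimple group $L$ and is quasisimple (being isomorphic to $G$), so Proposition~\ref{appendixprop1} shows that $L$ itself is quasisimple. Taking $H = L$ then completes the argument.

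The step I expect to be the main obstacle is (ii), and within it the passage from \emph{reductive} to \emph{semisimple}: everything hinges on ruling out a central torus in $L$, which is exactly what the $\Delta$-perfectness of $G$ supplies. A secondary point needing care is the assertion that the bijective $\Delta$-rational homomorphism $\phi:G \to \phi(G)$ is a genuine $\Delta$-isomorphism onto its image, which in characteristic zero over the differentially closed field $\frakU$ causes no trouble but should be remarked upon.
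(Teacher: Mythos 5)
Your proposal is correct and takes essentially the same route as the paper's proof: both pass to the Zariski closure $\Gbar$, strip off the unipotent radical (the paper via a Levi decomposition $\Gbar = R_u(\Gbar)\rtimes H$, you via the isomorphic quotient $\Gbar/\Gbar_u$), show the restricted map is injective because its kernel is simultaneously finite (by quasisimplicity) and unipotent, deduce semisimplicity of the target from $\Delta$-perfectness of $G$, and finish by invoking Proposition~\ref{appendixprop1}. The only differences are minor — the paper cites Theorem~\ref{almostsimplethm} for $D_\Delta(G)=G$ and shows $\Gbar$ is perfect, while you reprove $\Delta$-perfectness directly from simplicity of $G/Z(G)$ and instead kill the central torus of the reductive quotient — though your preliminary reduction ruling out commutative $G/Z(G)$ ``because a quasisimple algebraic group is non-commutative'' is circular as written and should be replaced by the intrinsic observation that for connected $G$ with finite center, $[G,G]\subseteq Z(G)$ would force the commutator map $G\times G\to Z(G)$ to be constant, making $G$ commutative and hence finite.
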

\begin{proof} From Theorem 4.3, Chapter VIII of \cite{humphreys} we have that the Zariski closure $\overline{G}$ of $G$ can be written as a semidirect product
\[\overline{G}_u\rtimes H,\]
where $\overline{G}_u$ is the unipotent radical of $\overline{G}$ and $H$ is a reductive algebraic subgroup of $\overline{G}$ and is $H$ is connected. \\[0.1in]
We claim that $H$ is semisimple. First note that $\overline{G}$ is perfect, that is, $\Gbar = (\Gbar,\Gbar)$.  To see this,  Theorem~\ref{almostsimplethm} implies that $G = D_\Delta(G)$. The group $(\Gbar,\Gbar)$ is Zariski closed and so is Kolchin closed. Since $(\Gbar,\Gbar)\subset \Gbar$ contains $(G,G)$, it contains $D_\Delta(G) = G$ and therefore must equal $\Gbar$. Let $\phi:\Gbar \rightarrow H$ be the projection with kernel $\Gbar_u$. We have $H = \phi(\Gbar) = \phi((\Gbar,\Gbar)) = (H,H)$, that is, $H$ is perfect. The solvable radical $H_r$ has finite intersection with $(H,H)$ (Lemma, p.125 \cite{humphreys}), so $H_r$ is finite. Therefore $H$ is semisimple.\\[0.1in]
We now claim that $\phi$ restricted to $G$ is an isomorphism of $G$ onto a Zariski dense subgroup of $H$. The group $\phi(\Gbar)$ is Zariski closed, from which it follows that $H=\phi(\Gbar)$ is the Zariski closure of $\phi(G)$. To see that $\phi$ is injective on $G$, note that, since $G$ is quasisimple, $\ker \phi|_G$ is either finite or all of $G$. If  $G = \ker \phi|_G$, we would have $G\subset \Gbar_u$, and so $G$ would not be quasisimple. Therefore $\ker \phi|_G \subset \Gbar_u $ is a finite unipotent group and so must be trivial. \\[0.1in]
Since $\phi(G)$ is a connected quasisimple  Zariski dense $\Delta$-subgroup of the semisimple group $H$, Proposition~\ref{appendixprop1} implies that $H$ is quasisimple.
\end{proof}
\noindent {\bf Proof of Theorem~\ref{appendixthm}.} From Proposition~\ref{appendixprop3}, we may assume that $G$ is a connected Zariski dense $\Delta$-subgroup of a quasisimple algebraic group $H$ defined over $\frakU$.  A fundamental theorem of Chevalley (\cite{chevalley_notes}\footnote{See \cite{borelchevalley} (especially 3.3(6) and 3.3(1)) for a survey of these notes  and \cite{cassidy_quasi} for a discussion of the literature reproving and extending this result.}), states that $H$ is isomorphic to an algebraic group defined over $\QX$. Therefore, we may further assume that $H$ itself is a quasisimple group defined over $\QX \subset  C$, the constants of $\Delta$.  Proposition~\ref{appendixprop2} implies that  $G$ is $\Delta$-isomorphic to $H(C')$ where $C'$ are the $\Delta'$ constants of some commuting linearly independent set of derivations of $\frakU\Delta$. Proposition 7, Chapter 0 of \cite{kolchin_groups} states that we may extend $\Delta'$  to a basis $\tilde\Delta $ of $\frakU\Delta$. \hfill \QED
\bibliographystyle{plain}
\newcommand{\SortNoop}[1]{}\def\cprime{$'$}

%\newcommand{\SortNoop}[1]{}
%\begin{thebibliography}{10}

%\end{thebibliography}

\end{document}